\begin{document}
 \newtheorem{thm}{Theorem}[section]
 \newtheorem{cor}[thm]{Corollary}
 \newtheorem{lem}[thm]{Lemma}
 \newtheorem{prop}[thm]{Proposition}
 \theoremstyle{definition}
 \newtheorem{defn}[thm]{Definition}
 \theoremstyle{remark}
 \newtheorem{rem}[thm]{Remark}
 \newtheorem{ex}[thm]{Example}
 \theoremstyle{example}
 \numberwithin{equation}{section}
 \def\stackunder#1#2{\mathrel{\mathop{#2}\limits_{#1}}}
 \newcommand{\hooklongrightarrow}{\lhook\joinrel\longrightarrow}

 \title{DIRECTED WEAK FIBRATIONS }
 \author{I. POP}
 \date{}
 \maketitle
\begin{abstract}In a previous paper \cite{Pop} the author studied the homotopy lifting property in the category dTop of directed spaces in the sense of M. Grandis \cite{Grand1}, \cite{Grand2}, \cite{Grand3}. The present paper, which is a continuation of aforementioned article, introduces and studies the directed weak homotopy property (dWCHP) and directed weak fibrations, extending to the category dTop the well known Dold's (or weak) fibrations \cite{Dold1}. The dWCHP is characterized in several ways. Then the notion of a directed fibre homotopy equivalence (dFHE) between directed weak fibrations is studied. Some examples and counterexamples are given.

\textit{2000 Mathematics Subject Classification.}  55R05;55P99,55U35,55R65, 54E99.

\textit{Keywords.} Directed (d-) space, d- fibration, vertical d- homotopy, directed weak covering homotopy property, directed weak fibration, d-domination, semistationary d-homotopy, d-semistationary lifting pair, directed fibre homotopy equivalence, d-shrinkable.
\end{abstract}

\section{Introduction}
Directed Algebraic Topology is a recent subject which arose from the study of some phenomena in the analysis of concurrency, traffic networks,space-time models, etc.(\cite{Faj1}-\cite{Gou1}). It was systematically developed by Marco Grandis (\cite{Grand1},\cite{Grand2},\cite{Grand3}). Directed spaces have privileged directions and directed paths therefore do not need to be reversible. M.Grandis introduced and studied 'non-reversible' homotopical tools corresponding to ordinary homotopies, fundamental group and fundamental $n$-groupoids: directed homotopies, fundamental monoids and fundamental $n$-categories. Also some directed homotopy constructions were considered: pushouts and pullbacks, mapping cones and homotopy fibres, suspensions and loops, cofibre and fibre spaces. As for directed fibrations, M.Grandis \cite{Grand2} refers to these (more precisely to the so-called lower and upper d-fibrations) only in relation with directed h-pullbacks. But in \cite{Pop},  the author of this paper defined and studied in detail Hurewicz directed fibrations (bilateral d-fibrations in the sense of definitions by Grandis). In classical algebraic topology, besides the homotopy covering property (CHP)\cite{Hur}, other properties of covering/lifting homotopy have also been studied and proved very interesting :weak CHP (WCHP)\cite{Dold1},\cite{Dieck},\cite{Kamp1}, \cite{James}; rather weak CHP (RWCHP)\cite{Brown1},\cite{Brown2},\cite{Pop1},\cite{Kie}; very weak CHP (VWCHP) \cite{Kie}; approximate fibrations (AHLP)\cite{CorDuv},\cite{CorDuv2},\cite{Pop2}. In this paper the author proposes to define and study a directed weak lifting homotopy property (dWHLP).

The basics of the Directed Algebraic Topology which we will use are taken from the 2003 paper of Grandis \cite{Grand1}.

A \textsl{directed space} or a \textsl{d-space}, is a topological space $X$ equipped with a set $dX$ of continuous maps $a:\mathbf{I}=[0,1]\rightarrow X$, called \textsl{directed paths}, or \textsl{d-paths}, satisfying the following three axioms:

(i)(constant paths) every constant map $\mathbf{I}\rightarrow X$ is a directed path,

(ii)(reparametrisation) the set $dX$ is closed under composition with (weakly)increasing maps $\mathbf{I}\rightarrow \mathbf{I}$ ,

(iii) (concatenation) the set $dX$ is closed under concatenation (the product of consecutive paths, which will be denoted by $\ast$).

We use the notations $\underline{X}$ or $\uparrow X$ if $X$ is the underlying topological space; if $\underline{X}$ (or $\uparrow X$) is given, then the set of directed paths is denoted by $d\underline{X}$ (resp.$d(\uparrow X)$)and the underlying space by $|\underline{X}|$ (resp.$|\uparrow X|$).

The standard $d$-interval with the directed paths given by increasing (weakly) maps $\mathbf{I}\rightarrow \mathbf{I}$ is denoted by $\uparrow \mathbf{I}=\uparrow [0,1]$.

A \textsl{directed map}, or $d-map$, $f:\underline{X}\rightarrow \underline{Y}$, is a continuous mapping between d-spaces which preserves the directed paths: if $a\in d\underline{X}$, then $f\circ a\in d\underline{Y}$. The category of directed spaces and directed maps is denoted by $d\mathbf{Top}$ (or $\uparrow \mathbf{Top}$). A directed path $a\in d\underline{X}$ defines a directed map $a:\uparrow \mathbf{I}\rightarrow \underline{X}$ which is also a \textsl{path} of $\underline{X}$.

For two points $x,x'\in \underline{X}$ we write $x\preceq x'$ if there exists a directed path from $x$ to $x'$. The equivalence relation $\simeq $ spanned by $\preceq $ yields the partition of a d-space $\underline{X}$ in its directed path components and a functor $\uparrow \Pi_0:d\mathbf{Top}\rightarrow \mathbf{Set}$, $\uparrow \Pi_0(\underline{X})=|\underline{X}|/\simeq $. A non-empty d-space $\underline{X}$ is \textsl{directed path connected} if $\uparrow \Pi_0(\underline{X})$ contains only one element.

The \textsl{directed cylinder} of a d-space $\underline{X}$ is the d-space $\uparrow (|\underline{X}|\times \mathbf{I})$ , denoted by $\underline{X}\times \uparrow \mathbf{I}$ or $\uparrow \mathbf{I}\underline{X}$, for which a path $\mathbf{I}\rightarrow |\underline{X}|\times \mathbf{I}$ is directed if and only if its components $\mathbf{I}\rightarrow \underline{X}$, $\mathbf{I}\rightarrow \mathbf{I}$ are directed. The directed maps $\partial^{\alpha}:\underline{X}\rightarrow  \underline{X}\times \uparrow \mathbf{I}$, $\alpha=0,1$, defined by $\partial^{\alpha}(x)=(x,\alpha)$, are called the \textsl{faces} of the cylinder.

If $f,g:\underline{X}\rightarrow \underline{Y}$ are directed maps, a \textsl{directed homotopy} $\varphi$  from $f$ to $g$, denoted by $\varphi:f\rightarrow g$, or $\varphi:f\preceq g$, is a d-map $\varphi:\underline{X}\times \uparrow \mathbf{I}\rightarrow \underline{Y}$ such that $\partial^0\circ \varphi=f$ and $\partial^1\circ \varphi=g$. The equivalence relation defined by the d-homotopy preorder $\preceq $ is denoted by $f\simeq_d g$ or simply $f\simeq g$ . This means that there exists a finite sequence $f\preceq f_1\succeq f_2\preceq f_3\succeq...g$.

Two d-spaces $\underline{X}$ and $\underline{Y}$ are \textsl{d-homotopy equivalent} if there exist d-maps $f:\underline{X}\rightarrow \underline{Y}$ and $g:\underline{Y}\rightarrow \underline{X}$ such that $g\circ f\simeq_d 1_{\underline{X}}$ and $f\circ g\simeq_d 1_{\underline{Y}}$.

\textsc{Standard models}. The spaces $\textbf{R}^n$, $\textbf{I}^n$, $\textbf{S}^n$  have their \textsl{natural} d-structure, admitting all (continuous) paths. $\textbf{I}$ is called the \textsl{natural} interval. The \textsl{directed real line}, or \textsl{d-line} $\uparrow \mathbf{R}$ is the Euclidean line with directed paths given by the increasing maps $\mathbf{I}\rightarrow \mathbf{R}$ (with respect to natural orders). Its cartesian power in d\textbf{Top}, the \textsl{n-dimensional real d-space }$\uparrow \mathbf{R}^n$ is similarly described (with respect to the product order, $x\leq x'$ iff $x_i\leq x'_i$ for all $i$). The \textsl{standard d-interval} $\uparrow \mathbf{I}=\uparrow [0,1]$ has the subspace structure of the d-line; the \textsl{standard d-cube } $\uparrow \mathbf{I}^n $ is its n-th power, and a subspace of $\uparrow \mathbf{R}^n$. The \textsl{standard directed circle} $\uparrow \textbf{S}^1$ is the standard circle with the \textsl{anticlockwise structure}, where the directed paths $a:\mathbf{I}\rightarrow \mathbf{S}^1$ move this way, in the plane: $a(t)=[1,\vartheta (t)]$, with an increasing function $\vartheta$ (in polar coordinates).

A \textsl{directed quotient} $\underline{X}/R$ has the quotient structure, formed of finite concatenations of projected d-paths; in particular, for a subset $A\subset |\underline{X}|$, by $\underline{X}/A$ is denoted the quotient of $\underline{X}$ which identifies all points of $A$. In particular, $\uparrow \mathbf{S}^n=(\uparrow \mathbf{I}^n/\partial \mathbf{I}^n), (n>0)$. The standard circle has another d-structure of interest, induced by $\mathbf{R}\times \uparrow \mathbf{R}$ and called the ordered circle $\uparrow \mathbf{O}^1\subset \mathbf{R}\times \uparrow \mathbf{R}$. It is the quotient of $\uparrow \mathbf{I}+\uparrow \mathbf{I}$ which identifies lower and upper endpoints, separately.

The forgetful functor $U:d\mathbf{Top}\rightarrow \mathbf{Top}$ has adjoint: at left  $c_0(X)$ with \textsl{d-discrete structure} of constant paths,  and right $C^0(X)$ with the \textsl{natural d-structure} of all paths.

Reversing d-paths, by involution $r:\mathbf{I}\rightarrow \mathbf{I}, r(t)=1-t$, gives the \textsl{reflected}, or opposite, d-space; this forms a (covariant) involutive endofunctor, called \textsl{reflection} $R:d\mathbf{Top}\rightarrow d\mathbf{Top}, R(\underline{X})=\underline{X}^{op},(a\in d(\underline{X }^{op})\Leftrightarrow a^{op}:=a\circ r\in d\underline{X}$). A d-space is \textsl{symmetric} if it is invariant under reflection. It is \textsl{reflexive}, or \textsl{self-dual}, if it is isomorphic to its reflection, which is more general. The d-spaces $\uparrow \mathbf{R}^n$,$\uparrow \mathbf{I}^n$, $\uparrow\mathbf{S}^n$ and $\uparrow \mathbf{O}^1$ are all reflexive.
\section{Directed fibrations}
In this section we resume sections 2 and 3 from \cite{Pop} including definitions and results that are necessary or interesting for this paper.
\begin{defn}\label{def.2.1}Let $p:\underline{E}\rightarrow \underline{B}$,$f:\underline{X}\rightarrow \underline{B}$ be directed maps. A d-map $f':\underline{X}\rightarrow \underline{E}$ is called a directed lift of $f$ with respect to $p$ if $p\circ f'=f$.
\end{defn}
\begin{defn}\label{def.2.2}A directed map $p:\underline{E}\rightarrow \underline{B}$ is said to have the directed homotopy lifting property with respect to a d-space $\underline{X}$ if, given d-maps $f':\underline{X}\rightarrow \underline{E}$ and $\varphi:\underline{X}\times \uparrow \mathbf{I}\rightarrow \underline{B}$, and $\alpha\in \{0,1\}$, such that $\varphi \circ \partial^{\alpha}=p\circ f'$, there is a directed lift of $\varphi$, $\varphi':\underline{X}\times \uparrow \mathbf{I}\rightarrow \underline{E}$, with respect to $p$, $p\circ \varphi'=\varphi$, such that $\varphi'\circ\partial^\alpha=f'$.
$$
\xymatrix{
\underline{E} \ar[rr]^p & & \underline{B}\\
\underline{X} \ar[u]^{f'}_{\ \  =} \ar[rr]_{\partial^{\alpha}} & &
\underline{X}\times \uparrow
\textbf{I}\ar[ull]^{\varphi'}\ar[u]_{\varphi}^{=\,\,\,\,\,\,\,} }
$$
A directed map $p:\underline{E}\rightarrow \underline{B}$ is called directed (Hurewicz) fibration if $p$ has the directed homotopy lifting property with respect to all directed spaces (dHLP). This property is also called the directed covering homotopy property (dCHP).
\end{defn}
\textsc{Properties of directed fibrations}.

P1. If $p:\underline{E}\rightarrow \underline{B}$ has the directed homotopy lifting property with respect to $\underline{X}$ and $f_0,f_1:\underline{X}\rightarrow \underline{B}$ are directed homotopic, $f_0\simeq_d f_1$, then $f_0$ has a directed lift with respect to $p$ iff $f_1$ has this property.

P2. Let $p:\underline{E}\rightarrow \underline{B}$ be a directed fibration and $a\in d\underline{B}$ with $a(\alpha)=p(e_\alpha),e_\alpha\in |\underline{E}|$, and $\alpha\in \{0,1\}$. Then there exists a directed path $a_\alpha\in d\underline{E}$ which is a lift of $a$, $p\circ a_\alpha=a$, with the $\alpha$-endpoint $e_\alpha$, $a_\alpha(\alpha)=e_\alpha$.

\textsc{Examples of directed fibrations}.

E1. Let $\underline{F}$ and $\underline{B}$  be arbitrary directed spaces and let $p:\underline{B}\times \underline{F}\rightarrow \underline{B}$ be the projection. Then $p$ is a directed fibration.

E2. Let $p:E\rightarrow |\underline{B}|$  be a Hurewicz fibration. For the space $E$ consider the maximal d-structure compatible with $d\underline{B}$ and $p$, i.e., $d(\uparrow E)=\{a\in E^I/p\circ a\in d\underline{B}\}$. Then $p:\uparrow E\rightarrow\underline{B}$ is a directed fibration.

E3. If $p:\underline{E}\rightarrow \underline{B}$ is a directed fibration, then the opposite map $p:\underline{E}^{op}\rightarrow \underline{B}^{op}$ is also a directed fibration.

For an intrinsic characterization of the dHLP we need some notations.

Given a d-map $p:\underline{E}\rightarrow \underline{B}$ and $\alpha\in \{0,1\}$, we consider the following d-subspace of the product (in d\textbf{Top}) $\underline{E}\times \underline{B}^{\uparrow \mathbf{I}}$
$$\underline{B}_\alpha=\{(e,a)\in \underline{E}\times \underline{B}^{\uparrow \mathbf{I}}|a(\alpha)=p(e)\} $$
(The d-structure of $\underline{B}^{\uparrow \mathbf{I}}$ is given by the exponential law, $d\mathbf{Top}(\uparrow \mathbf{I},d\mathbf{Top}(\uparrow \mathbf{I},\underline{B}))\approx d\mathbf{Top}(\uparrow \mathbf{I}\times \uparrow \mathbf{I},\underline{B})$,\cite{Grand1}).

\begin{defn}\label{def.2.3}
A directed lifting pair for a directed map $p:\underline{E}\rightarrow\underline{B}$ is a pair of d-maps

\begin{equation}
\lambda_\alpha:\underline{B}_\alpha\rightarrow \underline{E}^{\uparrow \mathbf{I}}, \alpha=0,1,
\end{equation}
satisfying the following conditions:

\begin{equation}
\lambda_\alpha(e,\omega)(\alpha)=e,
\end{equation}
\begin{equation}
p\circ \lambda_\alpha(e,\omega)=\omega,
\end{equation}
for each $(e,\omega)\in \underline{B}_\alpha$.
\end{defn}
\begin{thm}\label{thm.2.4}(i) A directed map $p:\underline{E}\rightarrow \underline{B}$ is a directed fibration if and only if there exists a directed lifting pair for $p$.

(ii) If $p:\underline{E}\rightarrow \underline{B}$ is a directed fibration, the d-spaces $\underline{B}_0$ and $\underline{B}_1$ are d-homotopiy equivalent.
\end{thm}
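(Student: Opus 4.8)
For part (i), the plan is to exploit the exponential law in $d\mathbf{Top}$ recalled after Definition \ref{def.2.3}, which turns homotopies $\underline{X}\times\uparrow\mathbf{I}\to\underline{B}$ into d-maps $\underline{X}\to\underline{B}^{\uparrow\mathbf{I}}$ and back. For sufficiency, given a lifting pair $\lambda_\alpha$ and data $f':\underline{X}\to\underline{E}$, $\varphi:\underline{X}\times\uparrow\mathbf{I}\to\underline{B}$ with $\varphi\circ\partial^\alpha=p\circ f'$, I would pass to the adjoint $\hat\varphi:\underline{X}\to\underline{B}^{\uparrow\mathbf{I}}$, note that $x\mapsto(f'(x),\hat\varphi(x))$ is a d-map into $\underline{B}_\alpha$ (the compatibility condition is exactly $\varphi\circ\partial^\alpha=p\circ f'$), compose with $\lambda_\alpha$, and take the adjoint back to get $\varphi':\underline{X}\times\uparrow\mathbf{I}\to\underline{E}$; conditions $(2.2)$ and $(2.3)$ yield $\varphi'\circ\partial^\alpha=f'$ and $p\circ\varphi'=\varphi$. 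For necessity, I would apply the dHLP to the universal test space $\underline{X}=\underline{B}_\alpha$ itself, with $f'$ the projection $(e,a)\mapsto e$ and $\varphi$ the evaluation $((e,a),t)\mapsto a(t)$; the resulting lift, read through the exponential law, is precisely a $\lambda_\alpha$ satisfying $(2.2)$--$(2.3)$. The only thing needing care throughout is that every map built is genuinely directed, which is guaranteed by Cartesian closedness of $d\mathbf{Top}$ and by $\underline{B}_\alpha$ carrying the subspace d-structure.

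For part (ii), the plan is to use the lifting pair produced in (i) to define comparison d-maps and then show they are d-homotopy inverses. Set $u:\underline{B}_0\to\underline{B}_1$, $u(e,a)=(\lambda_0(e,a)(1),a)$, and $v:\underline{B}_1\to\underline{B}_0$, $v(e,a)=(\lambda_1(e,a)(0),a)$; both are d-maps (again by the exponential law), each leaving the path-coordinate $a$ fixed while sliding the point along a lift of $a$. That $u,v$ land in the correct subspaces follows from $p\circ\lambda_\alpha(e,a)=a$. It then remains to prove $v\circ u\simeq_d 1_{\underline{B}_0}$ and $u\circ v\simeq_d 1_{\underline{B}_1}$.

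I expect the main obstacle to be exactly here: in the classical Dold/Hurewicz argument one shows the composite lift is homotopic to the identity by reversing a path and using that a path followed by its reverse is null-homotopic, and reversal is forbidden in $d\mathbf{Top}$. To circumvent this I would not aim for a single directed homotopy but exploit the zigzag nature of $\simeq_d$. Concretely, for $(e,a)\in\underline{B}_0$ write $\xi=\lambda_0(e,a)$ and, for $s\in\uparrow\mathbf{I}$, let $a^s$ be the directed reparametrisation $a^s(t)=a(st)$; then $((e,a),s)\mapsto(\lambda_1(\xi(s),a^s)(0),a)$ is a directed homotopy from the auxiliary map $\rho(e,a)=(\lambda_1(e,c_{p(e)})(0),a)$ (at $s=0$, using the constant path $c_{p(e)}$ at $p(e)$) to $v\circ u$ (at $s=1$). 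Separately, $((e,a),s)\mapsto(\lambda_1(e,c_{p(e)})(s),a)$ is a directed homotopy from $\rho$ to $1_{\underline{B}_0}$, because $\lambda_1(e,c_{p(e)})$ projects to a constant path and hence stays in the fibre over $a(0)$. These two directed homotopies give a zigzag $v\circ u\succeq\rho\preceq 1_{\underline{B}_0}$, whence $v\circ u\simeq_d 1_{\underline{B}_0}$; the relation $u\circ v\simeq_d 1_{\underline{B}_1}$ is obtained symmetrically, swapping the roles of $0,1$ and of $\lambda_0,\lambda_1$ (equivalently, by applying the reflection functor $R$ and Example E3). The delicate verifications are that $(s,t)\mapsto st$ is a d-map $\uparrow\mathbf{I}\times\uparrow\mathbf{I}\to\uparrow\mathbf{I}$, so that $a^s$ depends directedly on $(a,s)$, and that the various pairings land in $\underline{B}_0$ respectively $\underline{B}_1$.

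Finally, I would record a shorter route that bypasses $u,v$ entirely: the maps $q_\alpha:\underline{B}_\alpha\to\underline{E}$, $(e,a)\mapsto e$, and $j_\alpha:\underline{E}\to\underline{B}_\alpha$, $e\mapsto(e,c_{p(e)})$, satisfy $q_\alpha\circ j_\alpha=1_{\underline{E}}$, while the directed reparametrisation homotopy $((e,a),s)\mapsto(e,a^s)$ (with $a^s(t)=a(st)$ for $\alpha=0$ and $a^s(t)=a(s+(1-s)t)$ for $\alpha=1$) shows $j_\alpha\circ q_\alpha\simeq_d 1_{\underline{B}_\alpha}$ directly. This already gives $\underline{B}_0\simeq_d\underline{E}\simeq_d\underline{B}_1$; the merit of the lifting-pair argument is that it keeps the path-coordinate fixed and thus refines the conclusion to a fibrewise comparison that genuinely uses the fibration hypothesis.
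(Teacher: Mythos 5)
Your proposal is correct, and it is worth noting at the outset that this paper never actually proves Theorem \ref{thm.2.4}: the result is quoted from \cite{Pop}, and the closest proofs present in the text are those of Theorem \ref{thm.5.2} and Theorem \ref{thm.5.8}, the semistationary analogues for directed weak fibrations. Measured against those, your main line is essentially the paper's own. For (i), your use of the universal test object $\underline{B}_\alpha$ with the projection $(e,a)\mapsto e$ and the evaluation homotopy for necessity, and of the exponential law for sufficiency, is exactly the argument of Theorem \ref{thm.5.2}. For (ii), your maps $u,v$ and the zigzag $v\circ u\succeq\rho\preceq 1_{\underline{B}_0}$, built from the reparametrisations $a^s(t)=a(st)$ and an auxiliary constant-path map, are precisely the directed-fibration versions of the maps $f,g$ and the homotopies $\varphi,\psi$ (with $\omega_t(t')=\omega(tt')$) in the proof of Theorem \ref{thm.5.8}; the only difference is that, having no semistationarity to preserve, you may keep the path coordinate fixed, so your equivalence is even fibrewise over $\underline{B}^{\uparrow\mathbf{I}}$, and your symmetric treatment of $u\circ v$ (where the $\preceq$-directions come out reversed) is legitimate since $\simeq_d$ is the generated equivalence relation. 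Where you genuinely go beyond the paper is your closing ``shorter route'': the directed deformation retractions $j_\alpha\circ q_\alpha\simeq_d 1_{\underline{B}_\alpha}$ via $((e,a),s)\mapsto(e,a^s)$ use no lifting whatsoever, so they prove $\underline{B}_0\simeq_d\underline{E}\simeq_d\underline{B}_1$ for an \emph{arbitrary} d-map $p$; that is, part (ii) as literally stated does not need the fibration hypothesis at all. This observation is sound --- the maps $(s,t)\mapsto st$ and $(s,t)\mapsto s+t-st$ are directed, and it is harmless that the two retractions point in opposite $\preceq$-directions --- and it even applies verbatim to the spaces $\underline{B}^s_\alpha$ of Section 5, which puts Remark \ref{rem.5.7} in an interesting light. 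What the lifting-pair argument buys, as you correctly point out, is the finer fibrewise conclusion, which is the part of the statement that genuinely uses the fibration structure.
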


\section{Directed weak fibrations. Properties. Examples }
\begin{defn}\label{def.3.1}
Let $p:\underline{E}\rightarrow \underline{B}$ and $f,g:\underline{X} \rightarrow \underline{E} $ directed maps such that $p\circ f=p\circ g$. If we suppose that $f\simeq_d g$ by a sequence of directed homotopies  $f\preceq f_1\succeq f_2\preceq f_3\succeq...g$, $p\circ f_k=p\circ f=p\circ g$,  with the all homotopies $\varphi_1:f\preceq f_1,\varphi_2:f_2\preceq f_1...,\varphi_k:f_k\preceq f_{k\pm 1},....$ satisfying  the conditions $(p\circ\varphi_k)(x,t)=p(f(x))$ , $k=1,2,...$, $(\forall) x\in \underline{X},(\forall) t\in [0,1]$, then we denote this by $f\stackunder{(p)}{\simeq_d}g $ and say that $f$ is vertically directed homotopic to $g$.
\end{defn}
\begin{defn}\label{def.3.2}
A directed map $p:\underline{E}\rightarrow \underline{B}$ is said to have the directed weak covering homotopy property with respect to a d-space $\underline{X}$ if, given d-maps $ f':\underline{X}\rightarrow \underline{E}$ and $\varphi:\underline{X}\times \uparrow \mathbf{I}\rightarrow \underline{B}$, and $\alpha \in \{0,1\}$, such that $\varphi\circ \partial^\alpha=p\circ f'$, there is a directed lift of $\varphi$, $\varphi': \underline{X}\times \uparrow \mathbf{I}\rightarrow \underline{E}$, with respect to $p$, $p\circ \varphi'=\varphi$, such that $\varphi'\circ \partial^\alpha$ and $f'$ are vertically  directed homotopic $ \varphi'\circ \partial^\alpha \stackunder{(p)}{\simeq_d} f'$.
$$
\xymatrix{
\underline{E} \ar[rr]^p & & \underline{B}\\
\underline{X} \ar[u]^{f'}_{\ \  \stackunder{(p)}{\simeq_d}} \ar[rr]_{\partial^{\alpha}} & &
\underline{X}\times \uparrow
\textbf{I}\ar@{-->}[ull]^{\varphi'}\ar[u]_{\varphi}^{=\,\,\,\,\,\,\,} }
$$
\end{defn}
\begin{prop}\label{prop.3.3} If $p:\underline{E}\rightarrow \underline{B}$ has the directed weak covering homotopy property with respect to $\underline{X}$ and $f_0,f_1:\underline{X}\rightarrow \underline{B}$ are directed homotopic, $f_0\simeq_d f_1$, then $f_0$ has a direct lift with respect to $p$ if and only if $f_1$ has this property.
\end{prop}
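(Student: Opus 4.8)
The plan is to reduce to the case of a single directed homotopy and then apply the dWCHP directly, reading off the desired lift from one face of the lifted homotopy. Since $\simeq_d$ is by definition the equivalence relation generated by the preorder $\preceq$, the hypothesis $f_0 \simeq_d f_1$ amounts to a finite zig-zag $f_0 \preceq g_1 \succeq g_2 \preceq \cdots f_1$ of directed homotopies. By induction on the length of this chain, it suffices to show that liftability propagates across one link, i.e. that if $h, h' : \underline{X} \to \underline{B}$ are joined by a single directed homotopy in $d\mathbf{Top}$, then $h$ has a directed lift with respect to $p$ if and only if $h'$ does. The full statement, and in particular the ``if and only if'', then follows because the equivalence relation is symmetric, so we may propagate liftability in either direction along the chain, link by link.

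For the single-link step, suppose $\psi : \underline{X} \times \uparrow \mathbf{I} \to \underline{B}$ is a directed homotopy with $\psi \circ \partial^0 = h$ and $\psi \circ \partial^1 = h'$. Assume first that $h$ admits a directed lift $\bar h : \underline{X} \to \underline{E}$, so $p \circ \bar h = h = \psi \circ \partial^0$. This is exactly the input required by Definition \ref{def.3.2} with $\alpha = 0$ and $f' = \bar h$, since the compatibility $\psi \circ \partial^0 = p \circ \bar h$ holds. Invoking the dWCHP of $p$ with respect to $\underline{X}$ yields a directed map $\psi' : \underline{X} \times \uparrow \mathbf{I} \to \underline{E}$ with $p \circ \psi' = \psi$. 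I would then set $\overline{h'} := \psi' \circ \partial^1$ and compute $p \circ \overline{h'} = p \circ \psi' \circ \partial^1 = \psi \circ \partial^1 = h'$, so that $\overline{h'}$ is a directed lift of $h'$. Conversely, if instead $h'$ is the map assumed to lift, I would apply the dWCHP with $\alpha = 1$ (the definition supplies a lifting statement for both values of $\alpha$), obtaining a lift $\psi'$ of $\psi$ whose $0$-face $\psi' \circ \partial^0$ is then a directed lift of $h$. Thus a single link already yields the biconditional.

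The point worth emphasizing is that only the property $p \circ \psi' = \psi$ of the lifted homotopy is used here; the weaker endpoint condition $\psi' \circ \partial^\alpha \stackunder{(p)}{\simeq_d} f'$ that distinguishes the dWCHP from the strict dHLP plays no role, because the new lift is extracted from the opposite face $\partial^{1-\alpha}$, which automatically covers the corresponding face of $\psi$. Consequently the argument is formally identical to the Hurewicz case recorded in Property P1, and there is no serious obstacle. The only genuine care required is bookkeeping: one must select $\alpha = 0$ or $\alpha = 1$ according to which endpoint of the given homotopy is known to lift, which is precisely what the bilateral formulation of Definition \ref{def.3.2}, quantifying over both faces, makes available, and this is the mild point the proof must attend to when traversing a zig-zag whose links alternate in direction.
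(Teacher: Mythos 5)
Your proposal is correct and follows essentially the same route as the paper: lift the single connecting homotopy via the dWCHP (choosing $\alpha$ to be the face carrying the known lift), read off the new lift from the opposite face $\partial^{1-\alpha}$, and handle the general zig-zag recursively. Your closing observation that only the condition $p\circ\psi'=\psi$ is used, and not the vertical-homotopy endpoint condition, is exactly what the paper records separately as Remark \ref{rem.3.4}.
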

\begin{proof}
Let $f'_0: \underline{X}\rightarrow \underline{E}$ be directed lifting of $f_0$, $p\circ f'_0=f_0$. If $\varphi:f_0\preceq f_1$ or $\varphi:f_1\preceq f_0$, let $\varphi\circ \partial^\alpha=f_0,\alpha\in \{0,1\}$. Then we have $\varphi\circ \partial^\alpha=p\circ f'_0$ and let $\varphi'$ a directed lift of $\varphi$, $p\circ \varphi'=\varphi$ and $ \varphi'\circ \partial^\alpha \stackunder{(p)}{\simeq_d} f_0'$. If we define $f'_1=\varphi'\circ \partial^{1-\alpha}$, then $p\circ f'_1=p\circ \varphi'\circ \partial^{1-\alpha}=\varphi\circ \partial^{1-\alpha}=f_1$. In the general case $f_0 \preceq g_1\succeq g_2\preceq g_3\succeq ...f_1$, we recurrently apply the consequences of the immediately homotopies.
\end{proof}
\begin{rem}\label{rem.3.4}
In the proof of Proposition \ref{prop.3.3} we used only a part from the definition of the directed weak covering homotopy property.
\end{rem}
\begin{defn}\label{def.3.5} A directed map $p:\underline{E}\rightarrow \underline{B}$ is called a directed weak fibration or a directed Dold fibration if $p$ has the directed weak covering homotopy property with respect to every directed space (dWCHP).
\end{defn}
\begin{cor}\label{cor.3.6}
Let $p:\underline{E} \rightarrow \underline{B}$ be a directed weak fibration and  $a\in d \underline{B}$ a directed path with $a(\alpha)=p(e_\alpha)$ for a point $e_\alpha \in |\underline{E}|$ and $\alpha\in \{0,1\}$. Then $a$ admits a directed lift $a'_\alpha\in d\underline{E}$, $p\circ a'_\alpha=a$, whose the de $\alpha-end$ point, $a'_\alpha(\alpha)$, is in same directed path component of $\underline{E}$ as $e_\alpha$.
\end{cor}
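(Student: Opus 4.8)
The plan is to apply the directed weak covering homotopy property directly to the one-point d-space. Write $\underline{X}=\{\ast\}$ for the singleton, which carries a unique d-structure (its only path is constant, forced to be directed by axiom (i)). Under this choice a d-map $\underline{X}\rightarrow \underline{E}$ is the same thing as a point of $\underline{E}$, and since $\underline{X}\times \uparrow\mathbf{I}\cong \uparrow\mathbf{I}$ (the cylinder of the singleton has as directed paths exactly those whose $\mathbf{I}$-component is increasing), a d-map $\underline{X}\times \uparrow\mathbf{I}\rightarrow \underline{B}$ is the same thing as a directed path in $\underline{B}$. First I would set $f':\underline{X}\rightarrow \underline{E}$ to be the map with value $e_\alpha$ and take $\varphi:\underline{X}\times \uparrow\mathbf{I}\rightarrow \underline{B}$ to be $a$ under this identification. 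The compatibility hypothesis $\varphi\circ \partial^\alpha=p\circ f'$ then reduces exactly to the given condition $a(\alpha)=p(e_\alpha)$, so the input to the dWCHP is legitimate.

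Next, since $p$ is a directed weak fibration, the dWCHP (Definition \ref{def.3.2}, Definition \ref{def.3.5}) with respect to $\underline{X}$ furnishes a directed lift $\varphi':\underline{X}\times \uparrow\mathbf{I}\rightarrow \underline{E}$ with $p\circ \varphi'=\varphi$ and $\varphi'\circ \partial^\alpha \stackunder{(p)}{\simeq_d} f'$. Transporting along the isomorphism $\underline{X}\times \uparrow\mathbf{I}\cong \uparrow\mathbf{I}$, the lift $\varphi'$ becomes a directed path $a'_\alpha\in d\underline{E}$ with $a'_\alpha(t)=\varphi'(\ast,t)$; the relation $p\circ \varphi'=\varphi$ gives $p\circ a'_\alpha=a$, and its $\alpha$-endpoint is the point $a'_\alpha(\alpha)=(\varphi'\circ \partial^\alpha)(\ast)$. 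Thus the required lift is produced, and it remains only to interpret the vertical-homotopy clause.

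It remains to read off the consequence for path components. For maps out of a single point a directed homotopy is nothing but a directed path in $\underline{E}$ joining the two point-values; hence the relation $\varphi'\circ \partial^\alpha \stackunder{(p)}{\simeq_d} f'$ unwinds into a finite zigzag $a'_\alpha(\alpha)\preceq \cdots \succeq \cdots \preceq e_\alpha$ of directed paths in $\underline{E}$ (each projecting under $p$ to the constant path at $a(\alpha)$). In particular $a'_\alpha(\alpha)$ and $e_\alpha$ are related by the equivalence relation $\simeq$ spanned by $\preceq$, that is, they lie in the same directed path component of $\underline{E}$, which is the assertion. The argument is essentially formal, and there is no serious obstacle; the only point requiring a little care is the identification of d-maps out of the singleton cylinder with directed paths, together with the observation that here one uses less than the full strength of verticality — plain membership in a common $\simeq$-class is all that is claimed, the vertical refinement merely ensuring the connecting zigzag lies over the constant path.
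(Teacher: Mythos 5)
Your proof is correct and is precisely the argument the paper intends: the corollary is stated without proof as an immediate consequence of Definition \ref{def.3.2}, obtained by specializing the dWCHP to the one-point d-space, identifying $\{\ast\}\times\uparrow\mathbf{I}\cong\uparrow\mathbf{I}$, and reading the vertical d-homotopy zigzag between the point-maps $\varphi'\circ\partial^\alpha$ and $f'$ as a zigzag of directed paths in $\underline{E}$, exactly as you do. Your closing observation that only membership in the same $\simeq$-class (not the verticality over the constant path) is needed for the stated conclusion is also accurate.
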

\begin{ex}\label{ex.3.7}Let $p:E\rightarrow B$ be a weak fibration and $\uparrow B$ a d-structure on the space $B$. For the space $E$ consider the maximal d-structure compatible with $p$, i.e., $a\in \uparrow E $ iff $p\circ a\in d(\uparrow B)$. Then the directed map $p:\uparrow E\rightarrow \uparrow B$ is a directed weak fibration. Indeed: if $f':\underline{X} \rightarrow\uparrow E$ and $\varphi:\underline{X}\times \uparrow \mathbf{I}\rightarrow \uparrow B$ are directed maps and $\alpha\in \{0,1\}$ such that $\varphi\circ \partial^\alpha =p\circ f'$, then obviously there exists continuous maps $\varphi':X\times I\rightarrow B$ and $\mathcal{K}:X\times I \rightarrow E$ satisfying $p\circ \varphi=p$, $\mathcal{K}(x,0)=f'(x),K(x,1)=\varphi'(x,\alpha)$ and $p\mathcal{K}(x,t)=pf'(x)$, $(\forall) x\in X, (\forall)t\in I$. Then it is enough to prove that the maps $\varphi'$ and $\mathcal{K}$ are directed. If $c\in d(\underline{X}\times \uparrow \mathbf{I})$, then $p\circ (\varphi'\circ c)=\varphi\circ c\in d(\uparrow B)$ such that $\varphi'\circ c\in d(\uparrow E)$, so that $\varphi'$ is directed. Then, if $c(x)=(c'(x),c''(x))$, we have $c'\in d(\underline{X})$, and $c''\in d(\uparrow \mathbf{I})$,  and we have $p\circ (\mathcal{K}\circ c)=p\circ (f'\circ c')$. And since $f'$ is directed, $f'\circ c'\in d(\uparrow E)$ and $p\circ (f'\circ c')\in d(\uparrow B)$ , hence $p\circ (K\circ c)\in d(\uparrow E)$ which implies $\mathcal{K}\circ c\in d(\uparrow B)$. Therefore we have $\mathcal{K}:f'\stackunder{(p)}{\preceq_d}\varphi'_\alpha$.
\end{ex}
\begin{ex}\label{ex.3.8}Consider in the directed space $\uparrow \mathbf{R}\times \mathbf{R}$ the subspaces $\underline{B}=\uparrow \mathbf{R}$ and $\underline{E}=\{(x,y)|x.y\geq 0\}$ , and the directed map $p:\underline{E}\rightarrow \underline{B}$ ,$p(x,y)=x$. This is not a directed fibration. Consider a pointed space $\{\ast\}$ and the directed maps $h':\{\ast\}\rightarrow \underline{E},h'(\ast)=(0,-1)$ and $H:{\ast}\times \uparrow I\rightarrow \underline{B}, H(\ast,t)=t$, for which $H(\ast,0)=ph'(\ast)=(0,0)$. For this pair there isn't a directed map $H':\{\ast\}\times \uparrow \mathbf{I}\rightarrow \underline{E}$ such that $H'(\ast,0)=(0,-1)$ and $p\circ H'=H$. If we suppose the contrary, let $H'(\ast,t)=(x(t),y(t))$, $x(t)y(t)\geq 0,x(0)=0,y(0)=-1$ and $x(t)=t,(\forall) t\in[0,1]$. So $y(t)\geq 0,(\forall)t\in I$, in contradiction with $y(0)=-1$. Therefore $p$ is not a directed fibration. But we can prove that it is a directed weak fibration. Suppose that for an arbitrary directed space $\underline{A}$ are given the directed maps $f':\underline{A}\rightarrow \underline{E}, \varphi:\underline{A}\times \uparrow \mathbf{I}\rightarrow \underline{B}$, and $\alpha\in \{0,1\}$, satisfying $p(f'(a))=\varphi(a,\alpha),(\forall)a\in |\underline{A}|$. If $f'(a)=(x(a),y(a)),x(a).y(a)\geq 0$, and $\varphi(a,\alpha)=x(a)$, we define $\varphi':A\times \uparrow \mathbf{I}$ by $\varphi'(a,t)=(\varphi(a,t),\varphi(a,t)$. This is a directed map and  $p\varphi'(a,t)=\varphi(a,t)$, i.e., $p\circ \varphi'=\varphi$. Then $\varphi'(a,\alpha)=(\varphi(a,\alpha),\varphi(a,\alpha))=(x(a),x(a))$. Now we define $\mathcal{K}:\underline{A}\times \uparrow \mathbf{I}\rightarrow \underline{E}$ by $\mathcal{K}(a,s)=(x(a),(1-s)y(a)+sx(a),s\in I,a\in |\underline{A}|$. This is correctly defined since $x(a).[(1-s)y(a)+sx(a)]\geq 0$. And it is a directed map since $f'$ is and therefore $x(.)$ is directed. For this map we have: $\mathcal{K}(a,0)=(x(a),y(a))=f'(a)$,$\mathcal{K(}a,1)=(x(a),x(a))=(\varphi'\circ\nabla^\alpha)(a)$ and  $p\mathcal{K}(a,s)=x(a)=p(f'(a)),(\forall)s\in I,a\in |\underline{A}|$. Therefore we have $\mathcal{K}:f'\stackunder{(p)}{\preceq_d}\varphi'\circ \partial^\alpha$.
\end{ex}
\begin{defn}\label{def.3.9}
Let $p:\underline{E}\rightarrow \underline{B}$, $p':\underline{E}'\rightarrow \underline{B}$ directed maps. We say that $p$ is directed dominated by $p'$ (or $p'$ dominates $p$) if there exist directed maps $f:\underline{E}\rightarrow \underline{E}',g:\underline{E}'\rightarrow \underline{E}$ over $\underline{B}$, $p'\circ f=p,p\circ g=p'$ such that $g\circ f\stackunder{(p)}{\simeq_d}id_{\underline{E}}$.
\end{defn}
\begin{thm}\label{thm.3.10}If $p:\underline{E}\rightarrow \underline{B}$ is directed dominated by $p':\underline{E}'\rightarrow \underline{B}$ and if $p'$ has the dWCHP with respect to $\underline{X}$, then $p$ has the same property. Consequently if $p'$ is a directed weak fibration then $p$ is also a directed weak fibration.
\end{thm}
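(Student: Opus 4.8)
The plan is to transport a solution of the lifting problem for $p'$ back to $\underline{E}$ through the domination maps $f$ and $g$, and then to absorb the resulting error term using the vertical homotopy $g\circ f\stackunder{(p)}{\simeq_d}id_{\underline{E}}$ furnished by Definition \ref{def.3.9}. Concretely, suppose we are given $f':\underline{X}\rightarrow\underline{E}$ and $\varphi:\underline{X}\times\uparrow\mathbf{I}\rightarrow\underline{B}$ with $\varphi\circ\partial^\alpha=p\circ f'$. Since $p'\circ f=p$, the pair $(f\circ f',\varphi)$ satisfies $\varphi\circ\partial^\alpha=p\circ f'=p'\circ(f\circ f')$, so it is an admissible datum for the dWCHP of $p'$ with respect to $\underline{X}$. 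Applying that property yields a directed lift $\psi':\underline{X}\times\uparrow\mathbf{I}\rightarrow\underline{E}'$ with $p'\circ\psi'=\varphi$ and $\psi'\circ\partial^\alpha\stackunder{(p')}{\simeq_d}f\circ f'$.

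I would then set $\varphi':=g\circ\psi':\underline{X}\times\uparrow\mathbf{I}\rightarrow\underline{E}$. Because $p\circ g=p'$, we immediately get $p\circ\varphi'=p\circ g\circ\psi'=p'\circ\psi'=\varphi$, so $\varphi'$ is the required lift of $\varphi$. It remains to verify the vertical condition $\varphi'\circ\partial^\alpha\stackunder{(p)}{\simeq_d}f'$, and this is where the bulk of the work lies.

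The verification would proceed through the chain $\varphi'\circ\partial^\alpha=g\circ\psi'\circ\partial^\alpha\stackunder{(p)}{\simeq_d}g\circ f\circ f'\stackunder{(p)}{\simeq_d}f'$. For the first relation, I would take the zig-zag of $p'$-vertical directed homotopies realizing $\psi'\circ\partial^\alpha\stackunder{(p')}{\simeq_d}f\circ f'$ and post-compose each of its homotopies with $g$; since $g$ is a d-map the directions $\preceq/\succeq$ of the zig-zag are preserved, and since $p\circ g=p'$ each homotopy, which projected under $p'$ to the constant path at $p'(\psi'\circ\partial^\alpha(x))$, now projects under $p$ to the same constant, hence is $p$-vertical. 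For the second relation, I would take the zig-zag realizing the domination hypothesis $g\circ f\stackunder{(p)}{\simeq_d}id_{\underline{E}}$ and pre-compose each homotopy with $f'\times id_{\uparrow\mathbf{I}}$; directedness and the $p$-vertical property are again preserved, giving $g\circ f\circ f'\stackunder{(p)}{\simeq_d}f'$. Transitivity of $\stackunder{(p)}{\simeq_d}$ (it is the equivalence relation generated by the vertical preorder, hence closed under concatenation of zig-zags) then delivers $\varphi'\circ\partial^\alpha\stackunder{(p)}{\simeq_d}f'$, establishing the dWCHP for $p$ with respect to $\underline{X}$.

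The main obstacle is the careful bookkeeping in the two composition steps: one must confirm that pre- and post-composition with a d-map carry a zig-zag of vertical directed homotopies to another such zig-zag, both preserving the non-reversible directions and, crucially, converting $p'$-verticality into $p$-verticality via the over-$\underline{B}$ relations $p'\circ f=p$ and $p\circ g=p'$. Once these functoriality properties of vertical directed homotopy are in hand, the rest is formal. Finally, the ``consequently'' clause is immediate: if $p'$ is a directed weak fibration it has the dWCHP with respect to every $\underline{X}$, so the above shows $p$ does too, i.e.\ $p$ is a directed weak fibration.
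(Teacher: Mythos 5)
Your proposal is correct and follows essentially the same route as the paper's proof: apply the dWCHP of $p'$ to the data $(f\circ f',\varphi)$, compose the resulting lift with $g$, and conclude via the chain $g\circ\psi'\circ\partial^\alpha\stackunder{(p)}{\simeq_d}(g\circ f)\circ f'\stackunder{(p)}{\simeq_d}f'$. The only difference is that you spell out the bookkeeping (post-composition with $g$ turning $p'$-vertical zig-zags into $p$-vertical ones, pre-composition with $f'\times id_{\uparrow\mathbf{I}}$, and transitivity), which the paper leaves implicit.
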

\begin{proof}
Let some maps $f$ and $g$ as in Definition \ref{def.3.9}. Let $h':\underline{X}\rightarrow \underline{E},\varphi:\underline{X}\times \uparrow \mathbf{I}\rightarrow \underline{B}$, and $\alpha\in \{0,1\}$,  such that $\varphi\circ \partial^\alpha=p\circ h'$. We have the commutative diagram
$$
\xymatrix{
\underline{E}' \ar[rr]^{p'} & & \underline{B}'\\
\underline{X} \ar[u]^{f\circ h'} \ar[rr]_{\partial^{\alpha}} & &
\underline{X}\times \uparrow
\textbf{I}\ar[u]_{\varphi} }
$$

Then by hypothesis there is $\overline{\varphi'}:\underline{X}\times \uparrow \mathbf{I}\rightarrow \underline{E}'$, with $p'\circ \overline{\varphi'}=\varphi$ and $\overline{\varphi'}\circ \partial^\alpha \stackunder{(p')}{\simeq_d} f\circ h'$. Then we can consider the composition $\varphi'=g\circ \overline{\varphi'}$. For this we have $p\circ \varphi'=p\circ g\circ \overline{\varphi'}=\varphi$ and $\varphi'\circ \partial^\alpha=g\circ \overline{\varphi'}\circ \partial^\alpha \stackunder{(p)}{\simeq_d}(g\circ f)\circ h'\stackunder{(p)}{\simeq_d} h'$.
\end{proof}
\begin{cor}\label{cor.3.11}If $p:\underline{E}\rightarrow \underline{B}$ is directed dominated by a directed fibration $p':\underline{E}'\rightarrow \underline{B}$, then $p$ is a directed weak fibration.
\end{cor}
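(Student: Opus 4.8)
The plan is to obtain this as an immediate consequence of Theorem \ref{thm.3.10}, once I establish the elementary but essential bridging fact that every directed (Hurewicz) fibration is, in particular, a directed weak fibration. Granting this, the argument is purely formal: by hypothesis $p'$ is a directed fibration, hence a directed weak fibration, hence has the dWCHP with respect to every directed space; and $p$ is directed dominated by $p'$ in the sense of Definition \ref{def.3.9}; so Theorem \ref{thm.3.10} applies directly and delivers that $p$ is a directed weak fibration (Definition \ref{def.3.5}). Thus the entire substance of the corollary is absorbed into the inclusion of classes ``directed fibration $\Rightarrow$ directed weak fibration''.

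The single point I would need to justify is therefore that the dCHP of Definition \ref{def.2.2} implies the dWCHP of Definition \ref{def.3.2}. First I would line up the two definitions and note that they involve exactly the same data, namely d-maps $f'$ and $\varphi$ together with $\alpha\in\{0,1\}$ satisfying $\varphi\circ\partial^\alpha = p\circ f'$, and impose exactly the same condition $p\circ\varphi'=\varphi$ on the lift; they diverge only in the constraint on the initial face. The dCHP demands the strict equality $\varphi'\circ\partial^\alpha = f'$, whereas the dWCHP asks merely for the vertical directed homotopy $\varphi'\circ\partial^\alpha \stackunder{(p)}{\simeq_d} f'$. Consequently any lift $\varphi'$ produced by the dCHP is already a lift of the weaker kind, provided one checks that literal equality is a degenerate instance of vertical directed homotopy, i.e. that the relation $\stackunder{(p)}{\simeq_d}$ is reflexive.

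The ``hard part'', such as it is, reduces to verifying this reflexivity, and it is genuinely routine. Given any d-map $h:\underline{X}\to\underline{E}$, I would consider the stationary homotopy $\psi:\underline{X}\times\uparrow\mathbf{I}\to\underline{E}$ defined by $\psi(x,t)=h(x)$. It is directed: for a d-path $c=(c',c'')\in d(\underline{X}\times\uparrow\mathbf{I})$ one has $\psi\circ c = h\circ c'\in d\underline{E}$ because $h$ is directed (and constant paths are always directed by axiom (i)). Moreover $(p\circ\psi)(x,t)=p(h(x))$ is independent of $t$, so $\psi$ satisfies the verticality condition of Definition \ref{def.3.1}, whence $h\stackunder{(p)}{\simeq_d}h$. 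Applying this with $h=f'$ to a lift $\varphi'$ coming from the dCHP, where $\varphi'\circ\partial^\alpha=f'$, yields at once $\varphi'\circ\partial^\alpha\stackunder{(p)}{\simeq_d}f'$, exactly as the dWCHP requires. Therefore a directed fibration has the dWCHP with respect to every directed space, so it is a directed weak fibration, and the corollary now follows from Theorem \ref{thm.3.10}. I anticipate no real obstacle beyond this bookkeeping, since the weighty content was already carried out in Theorem \ref{thm.3.10}, the present statement being merely its specialization to the case in which the dominating map $p'$ enjoys the full dCHP rather than only the weak property.
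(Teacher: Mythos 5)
Your proposal is correct and follows exactly the route the paper intends: the paper states Corollary \ref{cor.3.11} as an immediate consequence of Theorem \ref{thm.3.10}, with the bridging fact (dCHP $\Rightarrow$ dWCHP) left implicit. Your verification of that fact---that an exact lift satisfies the weak condition because the stationary homotopy $\psi(x,t)=f'(x)$ is directed and vertical, so $\stackunder{(p)}{\simeq_d}$ is reflexive---is precisely the routine check the paper omits, and it is carried out correctly.
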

\begin{prop}\label{prop.3.12}Let $p:\underline{E}\rightarrow \underline{B}$ be a directed map with the dWCHP with respect to $\underline{X}$. Let $g':\underline{X}\rightarrow \underline{E}, \psi:\underline{X}\times (\uparrow \mathbf{I})^{op}\rightarrow \underline{B}$ directed maps, and $\alpha\in \{0,1\}$, such that $\psi\circ\partial^\alpha =p\circ g'$,where $\partial^\alpha$ denotes the faces of the inverse cylinder $\underline{X}\times (\uparrow \mathbf{I})^{op}$. Then there exists a directed map $\psi':\underline{X}\times (\uparrow \mathbf{I})^{op}\rightarrow \underline{E}$ satisfying the conditions $p\circ \psi'=\psi$ and $\psi'\circ \partial^\alpha \stackunder{(p)}{\simeq_d} g'$.
\end{prop}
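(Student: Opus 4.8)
The plan is to exploit the reflection symmetry of the standard interval in order to reduce the backward (inverse-cylinder) situation to the forward situation already guaranteed by the dWCHP. The involution $r:\mathbf{I}\to\mathbf{I}$, $r(t)=1-t$, carries weakly increasing maps to weakly decreasing ones, so it defines a d-isomorphism $r:\uparrow\mathbf{I}\to(\uparrow\mathbf{I})^{op}$ whose inverse is again given by $t\mapsto 1-t$. Taking the product with $1_{\underline X}$ yields a d-isomorphism $\rho:=1_{\underline X}\times r:\underline{X}\times\uparrow\mathbf{I}\to\underline{X}\times(\uparrow\mathbf{I})^{op}$, $\rho(x,t)=(x,1-t)$, since directedness in either cylinder is tested componentwise. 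The crucial bookkeeping fact is that $\rho$ interchanges the two faces: $\rho\circ\partial^\beta=\partial^{1-\beta}$, where on the left $\partial^\beta$ is a face of the forward cylinder and on the right $\partial^{1-\beta}$ is a face of the inverse cylinder.

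First I would pull the given data back along $\rho$. Set $\varphi:=\psi\circ\rho:\underline{X}\times\uparrow\mathbf{I}\to\underline{B}$, a d-map as a composite of d-maps, and put $\beta:=1-\alpha$. Using $\rho\circ\partial^\beta=\partial^{1-\beta}=\partial^{\alpha}$ together with the hypothesis $\psi\circ\partial^\alpha=p\circ g'$, one obtains $\varphi\circ\partial^\beta=\psi\circ\rho\circ\partial^\beta=\psi\circ\partial^\alpha=p\circ g'$. Thus $(g',\varphi,\beta)$ is admissible data for the forward dWCHP of $p$ with respect to $\underline X$, and applying that property yields a lift $\varphi':\underline{X}\times\uparrow\mathbf{I}\to\underline{E}$ with $p\circ\varphi'=\varphi$ and $\varphi'\circ\partial^\beta\stackunder{(p)}{\simeq_d}g'$.

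Next I would transport the lift back by setting $\psi':=\varphi'\circ\rho^{-1}:\underline{X}\times(\uparrow\mathbf{I})^{op}\to\underline{E}$. Since $\psi\circ\rho\circ\rho^{-1}=\psi$, one checks $p\circ\psi'=p\circ\varphi'\circ\rho^{-1}=\varphi\circ\rho^{-1}=\psi$. For the face condition, $\rho^{-1}$ also flips faces, $\rho^{-1}\circ\partial^\alpha=\partial^{1-\alpha}=\partial^{\beta}$, so that $\psi'\circ\partial^\alpha=\varphi'\circ\rho^{-1}\circ\partial^\alpha=\varphi'\circ\partial^\beta$. This is literally the map that the dWCHP already produced as vertically d-homotopic to $g'$, whence $\psi'\circ\partial^\alpha\stackunder{(p)}{\simeq_d}g'$, as required.

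The argument has essentially no analytic content; the only points that need care are verifying that $r$ is a genuine d-isomorphism $\uparrow\mathbf{I}\cong(\uparrow\mathbf{I})^{op}$ (so that $\rho$ and $\rho^{-1}$ are d-maps and $\varphi,\psi'$ are directed) and keeping the face indices straight, since the reflection sends face $\beta$ to face $1-\beta$. I expect the face-index bookkeeping — making sure the chosen end $\alpha$ of the inverse cylinder matches the end $1-\alpha$ at which the forward dWCHP is applied — to be the only place where a slip could occur. The vertical-homotopy clause transfers for free, because $\psi'\circ\partial^\alpha$ and $\varphi'\circ\partial^\beta$ are the \emph{same} map, not merely homotopic.
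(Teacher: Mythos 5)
Your proof is correct and is essentially the paper's own argument: the paper defines $\varphi(x,t)=\psi(x,1-t)$ and $\psi'(x,t)=\varphi'(x,1-t)$, which are exactly your $\psi\circ\rho$ and $\varphi'\circ\rho^{-1}$, and it likewise applies the forward dWCHP at the face $1-\alpha$ before transporting the lift back. The only difference is cosmetic: you package the reflection as a d-isomorphism $\rho=1_{\underline{X}}\times r$ so directedness follows by functoriality, whereas the paper checks directedness of $\varphi$ and $\psi'$ by hand on component paths.
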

\begin{proof}Define $\varphi:\underline{X}\times \uparrow \mathbf{I}\rightarrow \underline{B}$ by $\varphi(x,t)=\psi(x,1-t)$. This is a directed map. Indeed, if $c\in d(\underline{X}\times \uparrow \mathbf{I})$ we write $c(t)=(x(t),i(t))$, with $x(.)\in d\underline{X}$ and $i(.)\in d(\uparrow \mathbf{I})$ and define $c':I\rightarrow |\underline{X}|\times I$ by $c'(t)=(x(t),1-i(t))$. Since $i'(t)=1-i(t)$ defines a directed path in $(\uparrow \mathbf{I})^{op}$ , we have that $c'\in d(X\times (\uparrow \mathbf{I})^{op})$ and then $\psi\circ c'\in d(\underline{B})$. But $(\psi\circ c')(t)=\psi(x(t),1-i(t))=\psi(x(t),i(t))=(\varphi\circ c)(t)$. Thus $\varphi\circ c\in d(\underline{B})$ and so $\varphi $ is a directed map. Then by hypothesis there is a directed map $\varphi':\underline{X}\times \uparrow \mathbf{I}\rightarrow E$ satisfying $p\circ \varphi'=\varphi$ and $\varphi'\circ \partial^{1-\alpha}\stackunder{(p)}{\simeq_d} g'$. Now define $\psi':\underline{X}\times (\uparrow \mathbf{I})^{op}\rightarrow \underline{E}$ by $\psi'(x,t)=\varphi'(x,1-t)$. As above, this is a directed map. Moreover $p\psi'(x,t)=p\varphi'(x,1-t)=\varphi(x,1-t)=\psi(x,t)$. And $\psi'(x,\alpha)=\varphi'(x,1-\alpha)$, i.e., $\psi'\circ \partial^\alpha=\varphi'\circ \partial^{1-\alpha}$, thus we deduce $\psi'\circ \partial^\alpha\stackunder{(p)}{\simeq_d} g'$.
\end{proof}
\begin{cor}\label{cor.3.13}The reflection endofunctor $R:d\mathbf{Top}\rightarrow d\mathbf{Top}$ conserves the property of directed weak fibration.
\end{cor}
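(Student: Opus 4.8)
The plan is to reduce everything to Proposition~\ref{prop.3.12}, which is exactly the form of the dWCHP adapted to the inverse cylinder and is therefore the natural bridge between $p$ and its reflection. The first thing I would record is how $R$ interacts with cylinders and faces. Since a path into a product of d-spaces is directed precisely when both components are, and since $R(\uparrow\mathbf{I})=(\uparrow\mathbf{I})^{op}$, reflection commutes with products, so that
\[
R(\underline{X}\times\uparrow\mathbf{I})=\underline{X}^{op}\times(\uparrow\mathbf{I})^{op},
\]
and $R$ carries the face $\partial^{\alpha}$ of the standard cylinder to the face $\partial^{\alpha}$ of the inverse cylinder (the same underlying map $x\mapsto(x,\alpha)$). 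Recall also that $R$ is an involutive covariant endofunctor and that $R(p)$ has the same underlying map as $p$.

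Now let $p:\underline{E}\rightarrow\underline{B}$ be a directed weak fibration, fix an arbitrary d-space $\underline{X}$, and suppose given the data of a dWCHP problem for $R(p)$: d-maps $f':\underline{X}\rightarrow\underline{E}^{op}$, $\varphi:\underline{X}\times\uparrow\mathbf{I}\rightarrow\underline{B}^{op}$ and $\alpha\in\{0,1\}$ with $\varphi\circ\partial^{\alpha}=R(p)\circ f'$. I would apply $R$ to the entire diagram. Setting $g':=R(f'):\underline{X}^{op}\rightarrow\underline{E}$ and $\psi:=R(\varphi):\underline{X}^{op}\times(\uparrow\mathbf{I})^{op}\rightarrow\underline{B}$, functoriality and involutivity of $R$ convert the compatibility relation into $\psi\circ\partial^{\alpha}=p\circ g'$, where $\partial^{\alpha}$ is now the face of the inverse cylinder. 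This is precisely the hypothesis of Proposition~\ref{prop.3.12} with $\underline{X}^{op}$ playing the role of its $\underline{X}$; because $p$ has the dWCHP with respect to every d-space, that proposition furnishes $\psi':\underline{X}^{op}\times(\uparrow\mathbf{I})^{op}\rightarrow\underline{E}$ with $p\circ\psi'=\psi$ and $\psi'\circ\partial^{\alpha}\stackunder{(p)}{\simeq_d}g'$.

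Applying $R$ once more, I set $\varphi':=R(\psi'):\underline{X}\times\uparrow\mathbf{I}\rightarrow\underline{E}^{op}$. From $p\circ\psi'=\psi$ one obtains $R(p)\circ\varphi'=\varphi$, so $\varphi'$ lifts $\varphi$, while $R(\psi'\circ\partial^{\alpha})=\varphi'\circ\partial^{\alpha}$ and $R(g')=f'$. It then remains only to transport the vertical homotopy $\psi'\circ\partial^{\alpha}\stackunder{(p)}{\simeq_d}g'$ along $R$ so as to conclude $\varphi'\circ\partial^{\alpha}\stackunder{(R(p))}{\simeq_d}f'$, which is exactly the condition required of the lift for $R(p)$. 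Since $\underline{X}$ is arbitrary, this shows $R(p)$ is a directed weak fibration.

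The step I expect to require the only genuine care is this last transport. Each elementary homotopy $h_{k}:\underline{X}^{op}\times\uparrow\mathbf{I}\rightarrow\underline{E}$ of the defining zigzag for $\stackunder{(p)}{\simeq_d}$ must be replaced by $\bar h_{k}(x,t):=h_{k}(x,1-t):\underline{X}\times\uparrow\mathbf{I}\rightarrow\underline{E}^{op}$, which is directed because $t\mapsto 1-t$ is a directed isomorphism $\uparrow\mathbf{I}\rightarrow(\uparrow\mathbf{I})^{op}$ (the same reparametrisation used in the proof of Proposition~\ref{prop.3.12}). This reparametrisation reverses the direction of each elementary homotopy, but that is harmless, since a zigzag read in either orientation still exhibits $\simeq_d$; and verticality is preserved because $R(p)$ and $p$ share the same underlying map, whence $R(p)\circ\bar h_{k}(x,t)=p\circ h_{k}(x,1-t)$ remains constant in $t$. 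Assembling the $\bar h_{k}$ therefore yields the desired $R(p)$-vertical directed homotopy in $\underline{E}^{op}$, completing the argument.
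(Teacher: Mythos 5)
Your proof is correct and follows essentially the same route as the paper: apply the reflection functor to the lifting problem, invoke Proposition~\ref{prop.3.12} for the inverse cylinder over $\underline{X}^{op}$, and reflect back, noting that $R$ turns each elementary homotopy $u\preceq v$ into $v\preceq u$ so the vertical zigzag $\stackunder{(p)}{\simeq_d}$ is preserved. Your final paragraph merely makes explicit the reparametrisation detail that the paper compresses into the remark that ``the homotopies are conserved.''
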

\begin{proof}
Let $p:\underline{E}\rightarrow \underline{B}$ be a directed weak fibration. Consider its image under the functor $R$, $p:\underline{E}^{op}\rightarrow \underline{B}^{op}$. For an arbitrary directed space $\underline{X}$, consider directed maps $f':\underline{X}\rightarrow \underline{E}^{op}$, $\varphi:\underline{X}\times \uparrow \mathbf{I}\rightarrow \underline{B}^{op}$ and $\alpha\in \{0,1\}$ such that $\varphi\circ \partial^\alpha=p\circ f'$. Consider the opposite of these maps $f':\underline{X}^{op}\rightarrow \underline{E}$ and $\varphi:(\underline{X}\times \uparrow \mathbf{I})^{op}\approx \underline{X}^{op}\times (\uparrow \mathbf{I})^{op}\rightarrow \underline{B}$ with $\varphi\circ \partial^\alpha=p\circ f'$ also. Then by Proposition \ref{prop.3.12} there is a commutative diagram
$$
\xymatrix{
\underline{E} \ar[rr]^p & & \underline{B}\\
\underline{X}^{op} \ar[u]^{f'}_{\ \  \stackunder{(p)}{\simeq_d}} \ar[rr]_{\partial^{\alpha}} & &
\underline{X}^{op}\times (\uparrow
\textbf{I})^{op}\ar[ull]^{\varphi'}\ar[u]_{\varphi}^{=\,\,\,\,\,\,\,} }
$$
and the functor $R$ induces the diagram
$$
\xymatrix{
\underline{E}^{op} \ar[rr]^p & & \underline{B}^{op}\\
\underline{X} \ar[u]^{f'}_{\ \  \stackunder{(p)}{\simeq_d}} \ar[rr]_{\partial^{\alpha}} & &
\underline{X}\times \uparrow
\textbf{I}\ar[ull]^{\varphi'}\ar[u]_{\varphi}^{=\,\,\,\,\,\,\,} }
$$
since the reflection functor translates a relation $f\preceq g$ into $g\preceq f$, that is the homotopies are conserved.
\end{proof}
\begin{prop}\label{prop.3.14}Let $p:\underline{E}\rightarrow \underline{B}$ be a directed weak fibration and $f:\underline{B}'\rightarrow \underline{B}$ a directed map. Denote the pullback $\underline{E}\prod_{\underline{B}}\underline{B}'$ by $\underline{E}_f$, i.e., $\underline{E}_f=\{(e,b')\in \underline{E}\times \underline{B}'|p(e)=f(b')\}$ with the d-structure as subspace of the product $\underline{E}\times \underline{B}'$. Then the projection $p_f:\underline{E}_f\rightarrow \underline{B}'$ is a directed weak fibration.
\end{prop}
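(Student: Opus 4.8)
My plan is to reduce the weak lifting problem for the projection $p_f$ to one for $p$ via the universal property of the pullback, and then to transport the vertical correction back into $\underline{E}_f$ by keeping the base coordinate fixed. Recall that $\underline{E}_f$ comes with two projections, $p_f\colon\underline{E}_f\to\underline{B}'$ and $q\colon\underline{E}_f\to\underline{E}$, $q(e,b')=e$, satisfying $p\circ q=f\circ p_f$. So I begin with an arbitrary d-space $\underline{X}$ and dWCHP data $h'\colon\underline{X}\to\underline{E}_f$, $\varphi\colon\underline{X}\times\uparrow\mathbf{I}\to\underline{B}'$, and $\alpha\in\{0,1\}$ with $\varphi\circ\partial^\alpha=p_f\circ h'$. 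Writing $h_1:=q\circ h'$ and $h_2:=p_f\circ h'$, so that $h'=(h_1,h_2)$ with $p\circ h_1=f\circ h_2$, I form the d-map $f\circ\varphi\colon\underline{X}\times\uparrow\mathbf{I}\to\underline{B}$; the pullback relation gives $(f\circ\varphi)\circ\partial^\alpha=f\circ h_2=p\circ h_1$. Thus $(h_1,\,f\circ\varphi)$ is precisely a dWCHP datum for $p$.

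Since $p$ is a directed weak fibration, applying the dWCHP for $p$ with respect to $\underline{X}$ yields $\psi\colon\underline{X}\times\uparrow\mathbf{I}\to\underline{E}$ with $p\circ\psi=f\circ\varphi$ and $\psi\circ\partial^\alpha\stackunder{(p)}{\simeq_d}h_1$. I then assemble the lift $\varphi'\colon\underline{X}\times\uparrow\mathbf{I}\to\underline{E}_f$ by $\varphi'(x,t)=(\psi(x,t),\varphi(x,t))$. This is well defined into the pullback because $p(\psi(x,t))=(f\circ\varphi)(x,t)=f(\varphi(x,t))$; it is directed since both coordinates are d-maps and $\underline{E}_f$ carries the subspace d-structure of $\underline{E}\times\underline{B}'$; and by construction $p_f\circ\varphi'=\varphi$. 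It remains only to verify the vertical condition $\varphi'\circ\partial^\alpha\stackunder{(p_f)}{\simeq_d}h'$.

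This last step is the heart of the argument, and it works because the two maps to be compared share a second coordinate: indeed $\varphi'\circ\partial^\alpha=(\psi\circ\partial^\alpha,\,h_2)$ while $h'=(h_1,h_2)$. By Definition \ref{def.3.1} the relation $\psi\circ\partial^\alpha\stackunder{(p)}{\simeq_d}h_1$ is witnessed by a finite sequence of intermediate maps $g_i\colon\underline{X}\to\underline{E}$ and homotopies $\Phi_k\colon\underline{X}\times\uparrow\mathbf{I}\to\underline{E}$, all of which project under $p$ to the single map $p\circ h_1=f\circ h_2$. I lift each stage and each homotopy into $\underline{E}_f$ by appending the fixed coordinate $h_2$, setting $\tilde g_i=(g_i,h_2)$ and $\tilde\Phi_k(x,s)=(\Phi_k(x,s),h_2(x))$. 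Each of these lands in the pullback precisely because $p\circ g_i=p\circ\Phi_k(\,\cdot\,,s)=f\circ h_2$; each is directed, the second coordinate being $h_2$ composed with the (directed) projection $\underline{X}\times\uparrow\mathbf{I}\to\underline{X}$; and each is vertical over $p_f$, since $p_f\circ\tilde\Phi_k(x,s)=h_2(x)$ is constant in $s$. The preorder directions $\preceq,\succeq$ are unaffected by the fixed second coordinate, so the lifted sequence exhibits $\varphi'\circ\partial^\alpha\stackunder{(p_f)}{\simeq_d}h'$, establishing that $p_f$ has the dWCHP with respect to the arbitrary $\underline{X}$, hence is a directed weak fibration. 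The only genuinely delicate point is this transport of the vertical homotopy, and it succeeds exactly because holding the $\underline{B}'$-coordinate constant keeps every stage inside the pullback and vertical over $p_f$.
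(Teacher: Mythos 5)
Your proof is correct and follows essentially the same route as the paper: reduce the lifting problem to $p$ via $f\circ\varphi$, pair the resulting weak lift with $\varphi$ to land in the pullback, and transport the vertical homotopy by holding the $\underline{B}'$-coordinate fixed. Your only addition is that you spell out in detail the final transport step $(\psi\circ\partial^\alpha,h_2)\stackunder{(p_f)}{\simeq_d}(h_1,h_2)$, which the paper asserts without elaboration (and with a small notational slip, writing $\stackunder{(p)}{\simeq_d}$ where $\stackunder{(p_f)}{\simeq_d}$ is meant).
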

\begin{proof}Let be given the commutative diagram
$$
\xymatrix{
\underline{E}_f \ar[rr]^{p_f} & & \underline{B}'\\
\underline{X} \ar[u]^{h'} \ar[rr]_{\partial^{\alpha}} & &
\underline{X}\times \uparrow
\textbf{I}\ar[u]_{\varphi} }
$$
such that $h'(x)=(e(x),b'(x))$, $p(e(x))=f(b'(x))$, with $e(.):\underline{X}\rightarrow \underline{E}$ and $b'(.):\underline{X}\rightarrow \underline{B}'$ directed maps, we have $\varphi(x,\alpha)=b'(x)$ ,$(\forall) x\in \underline{X}$. Now we can consider the following commutative diagram
$$
\xymatrix{
\underline{E} \ar[rr]^p & & \underline{B}\\
\underline{X} \ar[u]^{e(.)} \ar[rr]_{\partial^{\alpha}} & &
\underline{X}\times \uparrow
\textbf{I}\ar[u]_{f\circ \varphi} }
$$
because $p(e(x))=f(b'(x))=f(\varphi(x,\alpha))=((f\circ \varphi)\circ \partial^\alpha)(x)$.

Now, by hypothesis there is a directed map $\overline{\varphi}':\underline{X}\times \uparrow \mathbf{I}\rightarrow \underline{E}$ such that $p\circ \overline{\varphi}'=f\circ \varphi$ and $\overline{\varphi}'\circ \partial^\alpha \stackunder{(p)}{\simeq_d} e(.)$.
$$
\xymatrix{
\underline{E} \ar[rr]^p & & \underline{B}\\
\underline{X} \ar[u]^{e(.)}_{\ \  \stackunder{(p)}{\simeq_d}} \ar[rr]_{\partial^{\alpha}} & &
\underline{X}\times \uparrow
\textbf{I}\ar[ull]^{\overline{\varphi}'}\ar[u]_{f\circ\varphi}^{=\,\,\,\,\,\,\,} }
$$
From the relation $p(\overline{\varphi}'(x,t))=f(\varphi(x,t))$ we see that for all $(x,t)\in \underline{X}\times \uparrow \mathbf{I}$,  $(\varphi'(x,t),\varphi(x,t))\in \underline{E}_f$, such that we can define $\varphi':\underline{X}\times \uparrow \mathbf{I}\rightarrow \underline{E}_f$ by $\varphi'(x,t)=\overline{\varphi}'(x,t),\varphi(x,t))$. This is a directed map and $(p_f\circ \varphi')(x,t)=\varphi(x,t)$, i.e., $p_f\circ \varphi'=\varphi$. Moreover, $(\varphi'\circ \partial^\alpha)(x)=((\overline{\varphi}'\circ \partial^\alpha)(x),(\varphi\circ \partial^\alpha)(x))=((\overline{\varphi}'\circ \partial^\alpha)(x),(p_f\circ h')(x))=((\overline{\varphi}'\circ \partial^\alpha)(x),b'(x))$, i.e., $\varphi'\circ \partial^\alpha=(\overline{\varphi}'\circ \partial^\alpha,b'(.))\stackunder{(p)}{\simeq_d}(e(.),b'(.))=h'$,

$$
\xymatrix{
\underline{E}_f \ar[rr]^{p_f} & & \underline{B}'\\
\underline{X} \ar[u]^{h'}_{\ \  \stackunder{(p)}{\simeq_d}} \ar[rr]_{\partial^{\alpha}} & &
\underline{X}\times \uparrow
\textbf{I}\ar[ull]^{\varphi'}\ar[u]_{\varphi}^{=\,\,\,\,\,\,\,} }
$$
\end{proof}
\section{A further characterization}
\begin{defn}\label{def.4.1}A directed homotopy $\varphi:\underline{X}\times \uparrow \mathbf{I}\rightarrow \underline{Y}$ is called semistationary if either  $\varphi(x,t)=\varphi(x,\frac{1}{2}),(\forall)x\in X,t\in [0,\frac{1}{2}]$ or $\varphi(x,t)=\varphi(x,\frac{1}{2}),(\forall) x\in X,t\in [\frac{1}{2},1]$. In the first case we say that $\varphi$ is lower semistationary and in the second case we say that $\varphi$ is upper semistationary.
 \end{defn}
\begin{thm}\label{thm.4.2}
A directed map $p:\underline{E}\rightarrow \underline{B}$ has the directed weak covering homotopy property (dWCHP) with respect to a directed space $\underline{X}$ if and only if $p$ has the directed covering homotopy property(dCHP) with respect to $\underline{X}$ for all semistationary directed homotopies.
\end{thm}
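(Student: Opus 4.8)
The plan is to prove the two implications separately, and in both directions to lean on one elementary principle: if $\theta:\mathbf{I}\to\mathbf{I}$ is a (weakly) increasing map and $\varphi:\underline{X}\times\uparrow\mathbf{I}\to\underline{Y}$ is a directed homotopy, then $(x,t)\mapsto\varphi(x,\theta(t))$ is again directed, since for $c\in d(\underline{X}\times\uparrow\mathbf{I})$ with $c(s)=(x(s),i(s))$ the reparametrised second coordinate $\theta\circ i$ stays in $d(\uparrow\mathbf{I})$ by closure axiom (ii). This is what lets me rescale, concatenate and truncate homotopies at will while staying inside $d\mathbf{Top}$.

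For the implication ``dWCHP $\Rightarrow$ semistationary dCHP'' I would argue as follows. Let $\varphi$ be semistationary, say lower semistationary, with $f'$ and $\alpha=0$ satisfying $\varphi\circ\partial^0=p\circ f'$. First I would apply the dWCHP to this data to obtain a lift $\varphi''$ with $p\circ\varphi''=\varphi$ together with a vertical directed homotopy $\varphi''\circ\partial^0\stackunder{(p)}{\simeq_d}f'$. The role of lower semistationarity is that $\varphi$ is constant in $t$ on $[0,\frac12]$, so $p\circ\varphi''$ is constant there and the restriction $\varphi''|_{[0,1/2]}$ is itself a vertical directed homotopy from $\varphi''\circ\partial^0$ to $\varphi''(\cdot,\frac12)$. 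I would then define the desired exact lift $\varphi'$ by keeping $\varphi'=\varphi''$ on $[\frac12,1]$ and, on the stationary half $[0,\frac12]$, inserting (after rescaling by $t\mapsto 2t$) the concatenation of the vertical homotopy carrying $f'$ to $\varphi''\circ\partial^0$ with $\varphi''|_{[0,1/2]}$. Constancy of $\varphi$ on $[0,\frac12]$ then gives $p\circ\varphi'=\varphi$, the left endpoint of the inserted homotopy gives $\varphi'\circ\partial^0=f'$ exactly, and continuity at $t=\frac12$ is automatic; this is precisely the dCHP for the semistationary $\varphi$. The upper semistationary case with $\alpha=1$ is symmetric and can be reduced to this one through the inverse cylinder $(\uparrow\mathbf{I})^{op}$ as in Proposition \ref{prop.3.12}.

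For the converse ``semistationary dCHP $\Rightarrow$ dWCHP'', given arbitrary data $f'$, $\varphi$, $\alpha$ with $\varphi\circ\partial^\alpha=p\circ f'$ (take $\alpha=0$), I would replace $\varphi$ by the lower semistationary homotopy $\bar\varphi(x,t)=\varphi(x,\theta(t))$, where $\theta$ is the increasing map equal to $0$ on $[0,\frac12]$ and to $t\mapsto 2t-1$ on $[\frac12,1]$. Directedness of $\bar\varphi$ is immediate from the reparametrisation principle, and $\bar\varphi\circ\partial^0=p\circ f'$, so the semistationary dCHP furnishes an exact lift $\bar\varphi'$ with $p\circ\bar\varphi'=\bar\varphi$ and $\bar\varphi'\circ\partial^0=f'$. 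Setting $\varphi'(x,t)=\bar\varphi'(x,\frac{t+1}{2})$ produces a directed lift of $\varphi$, and the stationary half $s\mapsto\bar\varphi'(x,\frac{s}{2})$ is exactly a forward vertical directed homotopy from $f'$ to $\varphi'\circ\partial^0$, whence $\varphi'\circ\partial^0\stackunder{(p)}{\simeq_d}f'$ and the dWCHP holds.

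I expect the hard part to be the first implication, specifically verifying that the map inserted on the stationary half is genuinely a d-map into $\underline{E}$ and not merely continuous. Because directed homotopies are not reversible, the concatenation there must run forward in both pieces, $f'\rightarrow\varphi''\circ\partial^0\rightarrow\varphi''(\cdot,\frac12)$, and one must check, via axiom (iii) for d-paths, that this forward concatenation sends each $c\in d(\underline{X}\times\uparrow\mathbf{I})$ to a d-path of $\underline{E}$. The genuinely delicate point is that $\varphi''\circ\partial^0\stackunder{(p)}{\simeq_d}f'$ is a priori a zig-zag rather than a single forward homotopy; when it has backward stages the clean concatenation above must be replaced by an induction on the length of the zig-zag, distributing its successive homotopies across subintervals of $[0,\frac12]$ and controlling the orientation at each stage so that the assembled map remains directed. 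Managing this orientation bookkeeping is where I anticipate the real work of the proof to lie.
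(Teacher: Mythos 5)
Your second implication (exact lifting of semistationary homotopies $\Rightarrow$ dWCHP) is correct and coincides with the paper's own argument: reparametrise $\varphi$ by the increasing map $\theta$ that is constant on $[0,\frac{1}{2}]$, lift the resulting semistationary homotopy exactly, and read the lift of $\varphi$ off the non-stationary half and the vertical homotopy off the stationary half. The genuine gap is in the first implication. There the dWCHP hands you only a vertical \emph{zig-zag} $\varphi''\circ\partial^0\stackunder{(p)}{\simeq_d}f'$, whereas your construction needs a single \emph{forward} vertical directed homotopy from $f'$ to $\varphi''\circ\partial^0$ before you can concatenate with $\varphi''|_{[0,1/2]}$ and glue. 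You flag this, but your proposed repair (induction on the length of the zig-zag, distributing its stages over subintervals of $[0,\frac{1}{2}]$ while ``controlling the orientation'') cannot work as described: a backward stage is a directed homotopy $\psi$ running from $h_{i+1}$ to $h_i$, and to traverse it from $h_i$ to $h_{i+1}$ inside the cylinder $\underline{X}\times\uparrow[0,\frac{1}{2}]$ you would have to precompose $\psi$ with an orientation-reversing reparametrisation of $\uparrow\mathbf{I}$, which is not a d-map; subdividing the interval changes nothing, since every subinterval still carries the forward direction. So the proposal is incomplete precisely at the point you yourself identify as ``the real work,'' and nothing in the sketch overcomes it.

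For comparison, the paper's proof has the same architecture as yours (its Lemma \ref{lem.4.3} applies the dWCHP only to the restriction over $[\frac{1}{2},1]$ rather than over all of $[0,1]$, an inessential difference), and it discharges exactly this point by Lemma \ref{lem.4.4}: any vertical zig-zag $f\stackunder{(p)}{\simeq_d}g$ yields a single forward vertical directed homotopy $G$ on $\underline{X}\times\uparrow[a,b]$ with $G(\cdot,a)=f$, $G(\cdot,b)=g$. The paper declares the proof of this lemma ``immediate'' and omits it. Your instinct that this is the crux is well founded: as a free-standing statement Lemma \ref{lem.4.4} cannot be correct, because $\stackunder{(p)}{\simeq_d}$ is a symmetric relation while the existence of a forward vertical homotopy from $f$ to $g$ is not. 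Concretely, take $\underline{E}=\uparrow\mathbf{I}$, $\underline{B}$ a point, $p$ the constant map (so every homotopy is vertical), $f\equiv 1$, $g\equiv 0$; then $g\preceq f$, hence $f\stackunder{(p)}{\simeq_d}g$, but for any directed $G:\underline{X}\times\uparrow[a,b]\rightarrow\uparrow\mathbf{I}$ the path $s\mapsto G(x,a+s(b-a))$ must be weakly increasing, so $G(x,a)\leq G(x,b)$ and no such $G$ runs from $1$ to $0$. Thus your blind attempt essentially reproduces the paper's proof with its hidden gap made explicit rather than buried in an unproved lemma; closing it would require an argument that re-uses the dWCHP (or extra structure of the zig-zags it produces) to straighten the zig-zag, and neither your sketch nor the paper supplies one.
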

\begin{proof}$\Leftarrow$  Suppose that $p$ has the dCHP with respect to $\underline{X}$ for all semistationary directed homotopies and let be given a commutative diagram
$$
\xymatrix{
\underline{E} \ar[rr]^p & & \underline{B}\\
\underline{X} \ar[u]^{f'} \ar[rr]_{\partial^{\alpha}} & &
\underline{X}\times \uparrow
\textbf{I}\ar[u]_{ \varphi} }
$$

a)Suppose $\alpha=0$. Then we define the following lower semistationary directed homotopy: $\overline{\varphi}_{-}:\underline{X}\times \uparrow \mathbf{I} \rightarrow \underline{B}$,
\[\overline{\varphi}_{-}(x,t)=\left\{ \begin{array}{ll}
\varphi(x,0), & \mbox{if $0\leq t\leq \frac{1}{2} $},\\
\varphi(x,2t-1), & \mbox{if $\frac{1}{2}\leq t\leq 1 $}.\end{array} \right.
\]
This map is well defined and directed because the map $\theta:\uparrow \mathbf{I}\rightarrow \uparrow \mathbf{I}$ defined by $\theta(t)=0,t\leq \frac{1}{2}$ and $\theta(t)=2t-1,\frac{1}{2}\leq t\leq 1$ is obviously directed. Then $\overline{\varphi}_{-}$ is lower semistationary, $\overline{\varphi}_{-}(x,t)=pf'(x),(\forall) x\in \underline{X}, t\in [0,\frac{1}{2}]$. By hypothesis there is a homotopy $\overline{\varphi}'_{-}:\underline{X}\times \uparrow \mathbf{I}\rightarrow \underline{E}$ with $\overline{\varphi}'_{-}\circ \partial^0=f'$ and $p\circ \overline{\varphi}'_{-}=\overline{\varphi}_{-}$, such that

\[p\overline{\varphi}'_{-}(x,t)=\overline{\varphi}_{-}(x,t)=\left\{ \begin{array}{ll}
\varphi(x,0)=pf'(x), & \mbox{if $0\leq t\leq \frac{1}{2} $},\\
\varphi(x,2t-1), & \mbox{if $\frac{1}{2}\leq t\leq 1 $}.\end{array} \right.
\]
Define $\varphi':\underline{X}\times \uparrow \mathbf{I}\rightarrow \underline{E}$ by $\varphi'(x,t)=\overline{\varphi}'_{-}(x,\frac{1+t}{2})$ which is a directed map and satisfies $p\varphi'(x,t)=p\overline{\varphi}'_{-}(x,\frac{1+t}{2})=\varphi(x,t)$ and $\varphi'(x,0)=\overline{\varphi}'_{-}(x,\frac{1}{2})$ and $f'(x)=\overline{\varphi}'_{-}(x,0)$.
Now we define $\hbar:\underline{X}\times \uparrow \mathbf{I}\rightarrow \underline{E}$ , by $\hbar(x,t)=\overline{\varphi}'_{-}(x,\frac{t}{2})$. For this directed homotopy we have $\hbar(x,0)=\overline{\varphi}'_{-}(x,0)=f'(x)$, $\hbar(x,1)=\overline{\varphi}'_{-}(x,\frac{1}{2})=\varphi'_0(x)$ and $p\hbar(x,t)=p\overline{\varphi}'_{-}(x,\frac{t}{2})=\overline{\varphi}_{-}(x,\frac{t}{2})=\varphi(x,0)=pf'(x)=p\varphi'_0$. Therefore $\hbar:f'\stackunder{(p)}{\preceq_d}\varphi'_0$.

b) Suppose $\alpha=1$. Then we define the following upper semistationary directed homotopy $\overline{\varphi}_{+}:\underline{X}\times \uparrow \mathbf{I} \rightarrow \underline{B}$,
\[\overline{\varphi}_{+}(x,t)=\left\{ \begin{array}{ll}
\varphi(x,2t), & \mbox{if $0\leq t\leq \frac{1}{2} $},\\
\varphi(x,1), & \mbox{if $\frac{1}{2}\leq t\leq 1 $}.\end{array} \right.
\]
For this we have $\overline{\varphi}_{+}(x,t)=\varphi(x,1)=pf'(x)$, $(\forall) x\in \underline{X}, t\in [\frac{1}{2},1]$. By hypothesis there is a homotopy $\overline{\varphi}'_{+}:\underline{X}\times \uparrow \mathbf{I}\rightarrow \underline{E}$ with $\overline{\varphi}'_{+}\circ \partial^1=f'$ and $p\circ \overline{\varphi}'_{+}=\overline{\varphi}_{+}$, such that

\[p\overline{\varphi}'_{+}(x,t)=\overline{\varphi}_{+}(x,t)=\left\{ \begin{array}{ll}
\varphi(x,2t), & \mbox{if $0\leq t\leq \frac{1}{2} $},\\
\varphi(x,1)=pf'(x), & \mbox{if $\frac{1}{2}\leq t\leq 1 $}.\end{array} \right.
\]
Define $\varphi':\underline{X}\times \uparrow \mathbf{I}\rightarrow \underline{E}$ by $\varphi'(x,t)=\overline{\varphi}'_{+}(x,\frac{t}{2})$ which is a directed map and satisfies $p\varphi'(x,t)=p\overline{\varphi}'_{+}(x,\frac{t}{2})=\varphi(x,t)$ and $\varphi'(x,1)=\overline{\varphi}'_{+}(x,\frac{1}{2})$ and $f'(x)=\overline{\varphi}'_{+}(x,1)$.

Now we define $\overline{\hbar}:\underline{X}\times \uparrow \mathbf{I}\rightarrow \underline{E}$ , by $\overline{\hbar}(x,t)=\overline{\varphi}'_{+}(x,\frac{1+t}{2})$. For this directed homotopy we have $\overline{\hbar}(x,0)=\overline{\varphi}'_{+}(x,\frac{1}{2})=(\varphi'\circ \partial^1)(x)$, $\overline{\hbar}(x,1)=\overline{\varphi}'_{+}(x,1)=f'(x)$, and $p\overline{\hbar}(x,t)=p\overline{\varphi}'_{+}(x,\frac{1+t}{2})=\overline{\varphi}_{+}(x,\frac{1+t}{2})=\varphi(x,1)=pf'(x)=p\varphi'_1$. Therefore $\overline{\hbar}:\varphi'\circ \partial^1\stackunder{(p)}{\preceq_d}f'$. This finishes the proof of the implication $\Leftarrow$.

The proof of the reverse implication requires the following two lemmas whose proof is immediate.
\begin{lem}\label{lem.4.3}Let $p:\underline{E}\rightarrow \underline{B}$ be a directed map satisfying the directed weak homotopy lifting property with respect to $\underline{X}$. Consider a directed interval $\uparrow [a,b]\subset \uparrow \mathbf{R},a<b $, and suppose be given some directed maps $\overline{f}':\underline{X}\rightarrow \underline{E},\overline{\varphi}:\underline{X}\times \uparrow \mathbf{I}\rightarrow \underline{B}$ such that $\overline{\varphi}_{\beta}=p\circ f'$, with $\overline{\varphi}_{\beta}(x)=\overline{\varphi}(x,\beta),\beta\in \{a,b\}$ . Then there exists $\overline{\varphi}':\underline{X}\times \uparrow [a,b]\rightarrow \underline{E}$ satisfying $p\circ \overline{\varphi}'=\overline{\varphi}$ and $\overline{\varphi}'_{\beta}\stackunder{(p)}{\simeq_d}\overline{f}'$.
\end{lem}
\begin{lem}\label{lem.4.4}Let be given directed maps $p:\underline{E}\rightarrow \underline{B}$, $f,g:\underline{X}\rightarrow \underline{E}$,with $p\circ f=p\circ g$ and $f\stackunder{(p)}{\simeq_d}g$. Then for every directed interval $\uparrow [a,b]\subset \uparrow \mathbf{R}, a<b$ there exists a directed map $G:\underline{X}\times \uparrow [a,b]\rightarrow \underline{E}$ satisfying the conditions:$G(x,a)=f(x),G(x,b)=g(x),pG(x,s)=pf(x)=pg(x),(\forall x\in \underline{X},s\in [a,b])$.
\end{lem}
$\Rightarrow $ Suppose that $p:\underline{E}\rightarrow \underline{B}$ satisfies the directed weak homotopy lifting property with respect to $X$ and let be given a commutative diagram
$$
\xymatrix{
\underline{E} \ar[rr]^p & & \underline{B}\\
\underline{X} \ar[u]^{f'} \ar[rr]_{\partial^{0}} & &
\underline{X}\times \uparrow
\textbf{I}\ar[u]_{\varphi} }
$$
with $\varphi$ a lower semistationary directed homotopy, $\varphi(x,t)=pf'(x),(\forall) x\in \underline{X},t\in [0,\frac{1}{2}]$.
Consider the commutative diagram
$$
\xymatrix{
\underline{E} \ar[rr]^p & & \underline{B}\\
\underline{X} \ar[u]^{f'} \ar[rr]_{j_{1/2}} & &
\underline{X}\times \uparrow
[1/2,1]\ar[u]_{\widetilde{\varphi}=\varphi|\underline{X}\times  \uparrow[1/2,1]} }
$$
with $j_{1/2}(x)=(x,1/2)$. Applying Lemma \ref{lem.4.3}, there exists a directed map $\widetilde{\varphi}':\underline{X}\times \uparrow [\frac{1}{2},1]\rightarrow \underline{E}$, with $p\circ \widetilde{\varphi}'=\widetilde{\varphi}=\varphi|X\times \uparrow [\frac{1}{2},1]$ and $\widetilde{\varphi}'_{1/2}\stackunder{(p)}{\simeq_d}f'$.  Now apply Lemma \ref{lem.4.4} for the maps $f',\widetilde{\varphi}'_{1/2}\underline{X}\rightarrow \underline{E}$, with $\uparrow [a,b]= \uparrow[0,\frac{1}{2}]$. Then there is a directed map $\kappa:\underline{X}\times \uparrow [0,\frac{1}{2}]\rightarrow \underline{E}$, with $\kappa_0=f',\kappa_{1/2}=\widetilde{\varphi}'_{1/2}$ and $p\kappa(x,t)=pf'(x),(\forall) x\in \underline{X},t\in [0,\frac{1}{2}]$. Now we can define $\varphi'_{-}:\underline{X}\times \uparrow \mathbf{I}\rightarrow \underline{E}$ by

\[\varphi'_{-}(x,t)=\left\{ \begin{array}{ll}
\kappa(x,t), & \mbox{if $0\leq t\leq 1/2 $},\\
\widetilde{\varphi}'(x,t), & \mbox{if $1/2\leq t\leq 1 $}.\end{array} \right.
\]
This satisfies $\varphi'_{-}(x,0)=\kappa(x,0)=f'(x)$ and
\[p\varphi'_{-}(x,t)=\left\{ \begin{array}{ll}
p\kappa(x,t)=ph'(x)=\varphi(x,t), & \mbox{if $0\leq t\leq \frac{1}{2} $},\\
p\widetilde{\varphi}'(x,t)=\widetilde{\varphi}(x,t)=\varphi(x,t), & \mbox{if $\frac{1}{2}\leq t\leq 1 $}.\end{array} \right.
\]
If $\varphi$ is an upper semistationary directed homotopy, $\varphi(x,t)=pf'(x),(\forall)x\in \underline{X},t\in [\frac{1}{2},]$, then consider the commutative diagram
$$
\xymatrix{
\underline{E} \ar[rr]^p & & \underline{B}\\
\underline{X} \ar[u]^{f'} \ar[rr]_{j_{1/2}} & &
\underline{X}\times \uparrow
[0,1/2]\ar[u]_{\overline{\varphi}=\varphi|\underline{X}\times  \uparrow[0,1/2]} }
$$

Applying Lemma \ref{lem.4.3} there exists $\overline{\varphi}':\underline{X}\times \uparrow [0,\frac{1}{2}]\rightarrow \underline{E}$, with $p\circ \overline{\varphi}'=\overline{\varphi}=\varphi|\underline{X}\times \uparrow [0,\frac{1}{2}]$ and $\overline{\varphi}'_{1/2}\stackunder{(p)}{\simeq_d}f'$. Now applying Lemma \ref{lem.4.4} on the directed interval $\uparrow [a,b]=\uparrow [\frac{1}{2},1]$, for the maps $f'$ and $\overline{\varphi}'_{1/2}$, there exists $\overline{\kappa}:\underline{X}\times \uparrow [0,\frac{1}{2}]\rightarrow \underline{E}$, with $\overline{\kappa}_1=f',\overline{\kappa}_{1/2}=\overline{\varphi}'_{1/2}$ and $p\overline{\kappa}(x,t)=pf'(x),(\forall) x\in \underline{X},t\in [\frac{1}{2},1]$. Finally, we can define $\varphi'_{+}:\underline{X}\times \uparrow \mathbf{I}\leftarrow \underline{E}$, by

\[\varphi'_{+}(x,t)=\left\{ \begin{array}{ll}
\overline{\varphi}'(x,t), & \mbox{if $0\leq t\leq \frac{1}{2} $},\\
\overline{\kappa}(x,t), & \mbox{if $\frac{1}{2}\leq t\leq 1 $}.\end{array} \right.
\]
For this we have $\varphi'_{+}(x,1)=\overline{\kappa}(x,1)=f'(x)$ and
\[p\varphi'_{+}(x,t)=\left\{ \begin{array}{ll}
p\overline{\varphi}'(x,t)=\varphi(x,t), & \mbox{if $0\leq t\leq \frac{1}{2} $},\\
p\overline{\kappa}(x,t)=pf'(x)=\varphi(x,t), & \mbox{if $\frac{1}{2}\leq t\leq 1 $}.\end{array} \right.
\]
\end{proof}
\begin{cor}\label{cor.4.5}Let $p:E\rightarrow B$ be a directed weak fibration. Let $f':\underline{X}\rightarrow \underline{E}$ a d-map.
Suppose that for $\alpha\in \{0,1\}$ there is a map $\varphi:\underline{X}\times \uparrow \mathbf{I}\rightarrow \underline{B}$ such that
$\varphi\circ \partial^\alpha =p\circ f'$. Also suppose $\varepsilon \in (0,1)$  be chosen.

(i) If $\alpha=0$ and $\varphi $ is stationary on the interval $[0,\varepsilon]$, then there exists $\varphi':\underline{X}\times \uparrow \mathbf{I}
\rightarrow \underline{E}$ satisfying the relations $\varphi'\circ \partial^0=f'$ and $p\circ \varphi'=\varphi$.

(ii) If $\alpha=1$ and $\varphi$ is stationary on the interval $[\varepsilon,1]$, then there exists $\varphi':\underline{X}\times \uparrow \mathbf{I}
\rightarrow \underline{E}$ satisfying the relations $\varphi'\circ \partial^1=f'$ and $p\circ \varphi'=\varphi$.
\end{cor}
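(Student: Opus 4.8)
The plan is to reduce both cases to the already-established directed covering homotopy property for \emph{semistationary} homotopies, by a reparametrisation that compresses the stationary sub-interval onto $[0,\tfrac12]$ (resp. $[\tfrac12,1]$) and then to undo that reparametrisation in a way that preserves the prescribed face. Since $p$ is a directed weak fibration it has the dWCHP with respect to the given $\underline{X}$, so by Theorem~\ref{thm.4.2} it has the dCHP with respect to $\underline{X}$ for every semistationary directed homotopy. This is the only property of $p$ I will use, and the crucial point is that it supplies an \emph{honest} lift with $\varphi'\circ\partial^{\alpha}=f'$, not merely a vertical homotopy.

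For case (i) I would introduce the strictly increasing homeomorphism $\sigma\colon\uparrow\mathbf{I}\to\uparrow\mathbf{I}$ that is linear on each of $[0,\tfrac12]$ and $[\tfrac12,1]$ with $\sigma(0)=0$, $\sigma(\tfrac12)=\varepsilon$, $\sigma(1)=1$; explicitly $\sigma(t)=2\varepsilon t$ for $t\le\tfrac12$ and $\sigma(t)=\varepsilon+(1-\varepsilon)(2t-1)$ for $t\ge\tfrac12$. Being strictly increasing, $\sigma$ is a directed isomorphism with directed inverse $\sigma^{-1}$. Setting $\varphi^{\sigma}=\varphi\circ(\mathrm{id}_{\underline{X}}\times\sigma)$, the hypothesis that $\varphi$ is stationary on $[0,\varepsilon]$ gives $\varphi^{\sigma}(x,t)=\varphi(x,0)=pf'(x)$ for all $t\in[0,\tfrac12]$, so $\varphi^{\sigma}$ is a lower semistationary directed homotopy in the sense of Definition~\ref{def.4.1} with $\varphi^{\sigma}\circ\partial^{0}=p\circ f'$.

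Next I would apply Theorem~\ref{thm.4.2} to $\varphi^{\sigma}$ and $f'$ with $\alpha=0$ to obtain a directed lift $\psi\colon\underline{X}\times\uparrow\mathbf{I}\to\underline{E}$ with $p\circ\psi=\varphi^{\sigma}$ and, honestly, $\psi\circ\partial^{0}=f'$. I then un-reparametrise by defining $\varphi'=\psi\circ(\mathrm{id}_{\underline{X}}\times\sigma^{-1})$, which is directed because $\sigma^{-1}$ is increasing. The verifications are immediate: $p\circ\varphi'(x,t)=\varphi^{\sigma}(x,\sigma^{-1}(t))=\varphi(x,\sigma(\sigma^{-1}(t)))=\varphi(x,t)$, so $p\circ\varphi'=\varphi$; and $\varphi'(x,0)=\psi(x,\sigma^{-1}(0))=\psi(x,0)=f'(x)$ since $\sigma^{-1}(0)=0$, so $\varphi'\circ\partial^{0}=f'$. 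Case (ii) is handled symmetrically with the \emph{same} $\sigma$: stationarity of $\varphi$ on $[\varepsilon,1]$ makes $\varphi^{\sigma}$ upper semistationary with $\varphi^{\sigma}\circ\partial^{1}=p\circ f'$, Theorem~\ref{thm.4.2} yields $\psi$ with $\psi\circ\partial^{1}=f'$, and $\varphi'=\psi\circ(\mathrm{id}_{\underline{X}}\times\sigma^{-1})$ satisfies $\varphi'\circ\partial^{1}=f'$ because $\sigma^{-1}(1)=1$.

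The subtle point, and the reason the stronger conclusion $\varphi'\circ\partial^{\alpha}=f'$ (genuine equality, not just a vertical homotopy) holds, is the interplay between the stationarity hypothesis and the choice of $\sigma$. Merely reparametrising a general homotopy so as to look semistationary would force $\sigma$ to be non-injective (constant on $[0,\tfrac12]$), so its ``inverse'' would not fix $0$ and one would recover only $\varphi'\circ\partial^{0}\stackunder{(p)}{\simeq_d}f'$, exactly as in the weak property. The hypothesis that $\varphi$ is genuinely constant on $[0,\varepsilon]$ is precisely what lets me take $\sigma$ to be an honest increasing \emph{bijection}: the stationarity absorbs the compression so that $\varphi^{\sigma}$ is still semistationary, while $\sigma^{-1}$ remains a directed map fixing the endpoint. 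I expect the only things requiring explicit care in the full write-up to be the checks that $\sigma^{\pm1}$ are directed (immediate from monotonicity) and that the reparametrised homotopies meet the face conditions demanded by Theorem~\ref{thm.4.2}.
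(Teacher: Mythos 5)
Your proposal is correct and is essentially identical to the paper's own proof: your $\sigma$ is exactly the paper's piecewise-linear directed isomorphism $\theta$ (with $\theta(t)=2\varepsilon t$ on $[0,\tfrac12]$ and $\theta(t)=2(1-\varepsilon)t+2\varepsilon-1$ on $[\tfrac12,1]$), and both arguments reparametrise $\varphi$ into a semistationary homotopy, invoke Theorem~\ref{thm.4.2} for an honest lift, and compose with $\theta^{-1}$ to conclude. Nothing further is needed.
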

\begin{proof}(i) Consider the directed isomorphism $\theta:\uparrow \mathbf{I}\rightarrow \uparrow \mathbf{I}$ defined by

 \[\theta(t)=\left\{ \begin{array}{ll}
 2\varepsilon t, & \mbox{if $0\leq t\leq \frac{1}{2} $},\\
2(1-\varepsilon)t+2\varepsilon-1, & \mbox{if $\frac{1}{2}\leq t\leq 1 $}.\end{array} \right.
 \]
 Using this isomorphism, transfer the homotopy $\varphi$ into a lower semistationary homotopy $\widetilde{\varphi}:\underline{X}\times \uparrow
 \mathbf{I}\rightarrow \underline{B}$ by defining $\widetilde{\varphi}=\varphi\circ \theta$. Indeed, if $t\in [0,\frac{1}{2}],\widetilde{\varphi}(x,t)=
 \varphi(x,2\varepsilon t)$, with $2\varepsilon t\in [0,\varepsilon]$, such that $\widetilde{\varphi(}x,t)=\widetilde{\varphi}(x,0)=\varphi(x,0)=pf'(x)$. For this, by Theorem \ref{thm.4.2}, there exists $\widetilde{\varphi}':\underline{X}\times \uparrow \mathbf{I}\rightarrow \underline{E}$ such that $\widetilde{\varphi}'\circ \partial^0=f'$ and
 $p\circ\widetilde{\varphi}'=\widetilde{\varphi}$ . Now define $\varphi':\underline{X}\times \uparrow \mathbf{I}\rightarrow \underline{E}$ by
 $\varphi'=\widetilde{\varphi}'\circ \theta^{-1}$, i.e.,

 \[\varphi'(x,t)=\left\{ \begin{array}{ll}
 \widetilde{\varphi}'(x,\frac{t}{2\varepsilon}), & \mbox{if $0\leq t\leq \varepsilon $},\\
 \widetilde{\varphi}'(x,\frac{t+1-2\varepsilon}{2(1-\varepsilon)}, & \mbox{if $\varepsilon\leq t\leq 1 $}.\end{array} \right.
 \]
 For this we have $\varphi'(x,0)=\widetilde{\varphi}'(x,0)=f'(x)$ and $p\circ \varphi'=p\circ \widetilde{\varphi}'\circ \theta^{-1}=
 \widetilde{\varphi}\circ \theta^{-1}=\varphi\circ \theta\circ\theta^{-1}=\varphi$.(Attention, $\theta^{-1}$ is a d-map! This is not the inverse path of $\theta$ ).

 (ii) We proceed similarly to (i).If $\varphi$ is stationary on $[\varepsilon,1]$, then $\widetilde{\varphi}=\varphi\circ \theta$ is upper semistationary,because for $t\in [\frac{1}{2},1]$ $\widetilde{\varphi}(x,t)=\varphi(x,2(1-\varepsilon)t+2\varepsilon -1)$, with
 $2(1-\varepsilon)t+2\varepsilon -1\in [\varepsilon,1]$, such that $\widetilde{\varphi}(x,t)=\widetilde{\varphi}(x,1)=\varphi(x,1)=pf'(x)$.
 Then continue as in (i).
 \end{proof}

\section{An intrinsic characterization: directed semistationary lifting pair}
Given a d-map $p:\underline{E}\rightarrow \underline{B}$ and $\alpha\in\{0,1\}$, we consider the following d-subspaces of the directed product $\underline{E}\times \underline{B}^{\uparrow \mathbf{I}}$:

$$\underline{B}^s_0=\{(e,\omega)\in \underline{E}\times \underline{B}^{\uparrow \mathbf{I}}|\omega(t)=p(e),(\forall)t\in [0,\frac{1}{2}]\},$$
and
$$ \underline{B}^s_1=\{(e,\omega)\in \underline{E}\times \underline{B}^{\uparrow \mathbf{I}}|\omega(t)=p(e),(\forall)t\in [\frac{1}{2},1]\}.$$
(The d-structure of $\underline{B}^{\uparrow \mathbf{I}}$ is given by the exponential law, $d\textbf{Top}(\uparrow \mathbf{I},d\mathbf{Top}(\uparrow \mathbf{I},\underline{B}))\approx d\mathbf{Top}(\uparrow \mathbf{I}\times \uparrow \mathbf{I},\underline{B})$,\cite{Grand1}).
\begin{defn}\label{def.5.1}A directed semistationary lifting pair for a directed map $p:\underline{E} \rightarrow \underline{B}$ consists of a pair of d-maps
\begin{equation}
\lambda^s_\alpha:\underline{B}^s_\alpha\rightarrow \underline{E}^{\uparrow \mathbf{I}},\alpha=0,1,
\end{equation}
satisfying the following conditions:
\begin{equation}
\lambda^s_\alpha(e,\omega)(\alpha)=e,
\end{equation}
\begin{equation}
p\circ \lambda^s_\alpha(e,\omega)=\omega,
\end{equation}
for each $(e,\omega)\in \underline{B}^s_\alpha.$
\end{defn}
\begin{thm}\label{thm.5.2}A directed map $p:\underline{E}\rightarrow \underline{B}$ is a directed weak fibration if and only if there exists a directed semistationary lifting pair for $p$.
\end{thm}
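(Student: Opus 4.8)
The plan is to reduce the statement to the semistationary covering homotopy property via Theorem \ref{thm.4.2}, and then to run the ``universal test space'' argument that produced the directed lifting pair in Theorem \ref{thm.2.4}, now with the space $\underline{X}$ replaced by $\underline{B}^s_\alpha$ itself. By Theorem \ref{thm.4.2}, $p$ is a directed weak fibration if and only if $p$ has the dCHP, with genuine equality $\varphi'\circ\partial^\alpha=f'$, with respect to every directed space for all lower (when $\alpha=0$) and upper (when $\alpha=1$) semistationary directed homotopies. So throughout I would work with this reformulation, noting that $\underline{B}^s_0$ encodes exactly the lower semistationary data and $\underline{B}^s_1$ the upper.

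For the implication $\Rightarrow$, assume $p$ is a directed weak fibration and fix $\alpha=0$; the case $\alpha=1$ is symmetric. I would take $\underline{X}=\underline{B}^s_0$ as the test space, with the projection $q:\underline{B}^s_0\to\underline{E}$, $q(e,\omega)=e$, and the evaluation $\mathrm{ev}:\underline{B}^s_0\times\uparrow\mathbf{I}\to\underline{B}$, $\mathrm{ev}((e,\omega),t)=\omega(t)$, the latter being the restriction of the evaluation $\underline{B}^{\uparrow\mathbf{I}}\times\uparrow\mathbf{I}\to\underline{B}$ and hence directed by the exponential law. For $(e,\omega)\in\underline{B}^s_0$ one has $\omega(t)=p(e)=\omega(\tfrac12)$ on $[0,\tfrac12]$, so $\mathrm{ev}$ is lower semistationary and $\mathrm{ev}\circ\partial^0=p\circ q$. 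Applying the reformulated property to the pair $(q,\mathrm{ev})$ yields a directed lift $\mathrm{ev}':\underline{B}^s_0\times\uparrow\mathbf{I}\to\underline{E}$ with $p\circ\mathrm{ev}'=\mathrm{ev}$ and $\mathrm{ev}'\circ\partial^0=q$. I would then let $\lambda^s_0$ be the exponential adjoint of $\mathrm{ev}'$, that is $\lambda^s_0(e,\omega)(t)=\mathrm{ev}'((e,\omega),t)$; the defining relations $\lambda^s_0(e,\omega)(0)=e$ and $p\circ\lambda^s_0(e,\omega)=\omega$ follow directly from $\mathrm{ev}'\circ\partial^0=q$ and $p\circ\mathrm{ev}'=\mathrm{ev}$, and $\lambda^s_0$ is a d-map again by the exponential law.

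For the implication $\Leftarrow$, assume a directed semistationary lifting pair $\lambda^s_0,\lambda^s_1$ and verify the reformulated covering property for an arbitrary $\underline{X}$. Given $f':\underline{X}\to\underline{E}$ and a lower semistationary $\varphi:\underline{X}\times\uparrow\mathbf{I}\to\underline{B}$ with $\varphi\circ\partial^0=p\circ f'$, lower semistationarity forces $\varphi(x,t)=p(f'(x))$ on $[0,\tfrac12]$, so the assignment $x\mapsto(f'(x),\varphi(x,-))$ is a well-defined d-map $\underline{X}\to\underline{B}^s_0$, using that $f'$ is directed and that the adjoint $\widehat{\varphi}:\underline{X}\to\underline{B}^{\uparrow\mathbf{I}}$ is directed and factors through the subspace. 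Composing with $\lambda^s_0$ and adjoining back gives $\varphi'(x,t)=\lambda^s_0(f'(x),\varphi(x,-))(t)$, a d-map with $\varphi'(x,0)=f'(x)$ and $p\circ\varphi'=\varphi$. This is a genuine semistationary dCHP lift; the upper case uses $\lambda^s_1$ identically. By Theorem \ref{thm.4.2}, $p$ then has the dWCHP with respect to every $\underline{X}$, i.e.\ $p$ is a directed weak fibration.

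The routine content is bookkeeping; the one place demanding care is the interface with the exponential law. I must check that $\mathrm{ev}$ and the adjoints $\lambda^s_\alpha$, $\varphi'$ are genuinely directed maps rather than merely continuous, and that the corestriction $x\mapsto(f'(x),\varphi(x,-))$ actually factors through the subspace $\underline{B}^s_\alpha$. The matching of the two notions of ``semistationary'' — Definition \ref{def.4.1}'s requirement that $\varphi$ be constant equal to its value at $\tfrac12$ on a half-interval, versus the defining relation $\omega(t)=p(e)$ of $\underline{B}^s_\alpha$ — is precisely what makes the evaluation map a legitimate semistationary lifting datum, and I would make this identification explicit at the outset so that Theorem \ref{thm.4.2} applies verbatim in both directions.
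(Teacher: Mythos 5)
Your proposal is correct and follows essentially the same route as the paper: both directions reduce the statement to the semistationary dCHP via Theorem \ref{thm.4.2}, prove the forward implication by applying that property to the test space $\underline{B}^s_\alpha$ with the projection $(e,\omega)\mapsto e$ and the evaluation homotopy $((e,\omega),t)\mapsto\omega(t)$ (your $q$ and $\mathrm{ev}$ are the paper's $f'_\alpha$ and $\varphi_\alpha$), and prove the converse by adjoining $\varphi$ to a map into $\underline{B}^s_\alpha$ and composing with $\lambda^s_\alpha$. Your explicit attention to the exponential-law and corestriction checks, and to matching Definition \ref{def.4.1}'s notion of semistationary with the defining condition of $\underline{B}^s_\alpha$, only makes explicit what the paper treats implicitly.
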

\begin{proof}Suppose that $p$ is a directed weak fibration. Then by Theorem \ref{thm.4.2},  $p$ has the dHLP with respect to all the directed spaces for directed semistationary homotopies.

For $\alpha\in\{0,1\}$, define the maps $f'_\alpha:\underline{B}^s_\alpha\rightarrow \underline{E}$ and $\varphi_\alpha:\underline{B}^s_\alpha\times \uparrow \mathbf{I}\rightarrow \underline{B}$, by $f'_\alpha(e,\omega)=e$ and $\varphi((e,\omega),t)=\omega(t)$. These are continuous and directed maps by the exponential laws. Moreover, since for every pair $(e,\omega)\in \underline{B}^s_\alpha$, $\omega$ is a semistationary path, we have that $\varphi_\alpha$ is a semistationary homotopy. If $\alpha=0$ then for all $t\in [0,\frac{1}{2}],\varphi_0((e,\omega),t)=\omega(t)=p(e)=(p\circ f'_0)(e,\omega)$ and if  $\alpha=1$ the for all $t\in [\frac{1}{2},1], \varphi_1(e,\omega)(t)=\omega(t)=p(e)=(p\circ f'_1)(e,\omega)$. Then applying the dHLP for the commutative diagram

$$
\xymatrix{
\underline{E} \ar[rr]^p & & \underline{B}\\
\underline{B}^s_\alpha \ar[u]^{f'_\alpha} \ar[rr]_{\partial^{\alpha}} & &
\underline{B}^s_\alpha\times \uparrow
\textbf{I}\ar[u]_{\varphi_\alpha} }
$$
there exists $\varphi'_\alpha:\underline{B}^s_\alpha\times \uparrow \mathbf{I}\rightarrow \underline{E}$, with $p\circ \varphi'_\alpha=\varphi_\alpha$  and $\varphi'_\alpha\circ \partial^\alpha=f'_\alpha$. Then we can define $\lambda^s_\alpha:\underline{B}^s_\alpha\rightarrow \underline{E}^{\uparrow \mathbf{I}}$, by $\lambda^s_\alpha(e,\omega)(t)=\varphi'((e,\omega),t)$. For this we have $(p\circ\lambda^s_\alpha(e,\omega))(t)=p(\varphi'((e,\omega),t))=\varphi_\alpha((e,\omega),t)=\omega(t)$ and $\lambda^s_\alpha(e,\omega)(\alpha)=\varphi'_\alpha((e,\omega),\alpha)=(\varphi'_\alpha\circ \partial^\alpha)(e,\omega)=f'_\alpha(e,\omega)=e$. Therefore $(\lambda^s_\alpha)_{\alpha=0,1}$ is a directed semistationary lifting pair for $p$.

Conversely, if $(\lambda^s_\alpha)_{\alpha=0,1}$ is a directed semistationary lifting pair for $p$, and for $\alpha\in\{0,1\}$ fixed, are given the d-maps $f':\underline{X} \rightarrow \underline{E}$ and $\varphi:\underline{X}\times \uparrow \mathbf{I}\rightarrow \underline{B}$, with $\varphi$ semistationary and $\varphi\circ \partial^\alpha=p\circ f'$, then we consider the directed map $g:\underline{X}\rightarrow \underline{B}^{\uparrow \mathbf{I}}$ defined by $g(x)(t)=\varphi(x,t)$. Since $g(x)$ is a semistationary directed path, with $g(x)(\alpha)=p(f'(x))$, we can define $\varphi':\underline{X}\times  \uparrow \mathbf{I}\rightarrow \underline{E}$, by $\varphi'(x,t)=\lambda^s_\alpha(f'(x),g(x))(t)$. For this we have $p\varphi'(x,t)=p\lambda^s_\alpha(f'(x),g(x))(t)=g(x)(t)=\varphi(x,t)$, and $\varphi'(x,\alpha)=\lambda^s_\alpha(f'(x),g(x))(\alpha)=g(x)(\alpha)=\varphi(x,\alpha)=f'(x)$. And by this we finished the proof.
\end{proof}
\begin{cor}\label{cor.5.3}Let $p:\underline{E}\rightarrow \underline{B}$ be a directed weak fibration . For  $\varepsilon\in (0,1)$ consider the
following directed subspaces of $\underline{E}\times \underline{B}^{\uparrow \mathbf{I}}$:

$\underline{B}_{\varepsilon}=\{(e,\omega)\in \underline{E}\times \underline{B}^{\uparrow \mathbf{I}} |\ \omega(t)=p(e),(\forall)t\in [0,\varepsilon]\}$ and

$\underline{B}^{\varepsilon}=\{(e,\omega)\in \underline{E}\times \underline{B}^{\uparrow \mathbf{I}} |\ \omega(t)=p(e),(\forall)t\in [\varepsilon,1]\}$.

Then there exists a pair of directed maps $\lambda_{\varepsilon}:\underline{B}_{\varepsilon}\rightarrow \underline{E}^{\uparrow \mathbf{I}}$ and
$\lambda^{\varepsilon}:\underline{B}^{\varepsilon}\rightarrow \underline{E}^{\uparrow \mathbf{I}}$, satisfying $\lambda_\varepsilon(e,\omega)(0)=e$,
$p\circ \lambda_\varepsilon(e,\omega)=\omega$ and, respectively $\lambda^{\varepsilon}(e',\omega')(1)=e',p\circ \lambda^{\varepsilon}(e',\omega')=\omega'$.
\end{cor}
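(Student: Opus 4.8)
The plan is to reproduce the argument of Theorem \ref{thm.5.2} almost verbatim, replacing the appeal to Theorem \ref{thm.4.2} (which handles only the semistationary case $\varepsilon=\frac{1}{2}$) by the stronger Corollary \ref{cor.4.5}, which lifts any homotopy that is stationary on $[0,\varepsilon]$ (resp. on $[\varepsilon,1]$). The spaces $\underline{B}_\varepsilon$ and $\underline{B}^\varepsilon$ play here exactly the role that $\underline{B}^s_0$ and $\underline{B}^s_1$ played there, and the two maps $\lambda_\varepsilon,\lambda^\varepsilon$ will be obtained as the exponential adjoints of the lifts produced by Corollary \ref{cor.4.5}.

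First I would construct $\lambda_\varepsilon$. On the product $\underline{B}_\varepsilon\times\uparrow\mathbf{I}$ define the evaluation maps $f':\underline{B}_\varepsilon\rightarrow\underline{E}$, $f'(e,\omega)=e$, and $\varphi:\underline{B}_\varepsilon\times\uparrow\mathbf{I}\rightarrow\underline{B}$, $\varphi((e,\omega),t)=\omega(t)$; both are directed by the exponential law. Since every pair $(e,\omega)\in\underline{B}_\varepsilon$ has $\omega$ stationary on $[0,\varepsilon]$, the homotopy $\varphi$ is stationary on $[0,\varepsilon]$, and the defining condition of $\underline{B}_\varepsilon$ gives $\varphi((e,\omega),0)=\omega(0)=p(e)=(p\circ f')(e,\omega)$, i.e. $\varphi\circ\partial^0=p\circ f'$. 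Applying Corollary \ref{cor.4.5}(i) with $\alpha=0$ and $\underline{X}=\underline{B}_\varepsilon$ yields a directed map $\varphi':\underline{B}_\varepsilon\times\uparrow\mathbf{I}\rightarrow\underline{E}$ with $\varphi'\circ\partial^0=f'$ and $p\circ\varphi'=\varphi$. Its exponential adjoint $\lambda_\varepsilon:\underline{B}_\varepsilon\rightarrow\underline{E}^{\uparrow\mathbf{I}}$, $\lambda_\varepsilon(e,\omega)(t)=\varphi'((e,\omega),t)$, is then the required directed map: indeed $\lambda_\varepsilon(e,\omega)(0)=(\varphi'\circ\partial^0)(e,\omega)=f'(e,\omega)=e$, while $(p\circ\lambda_\varepsilon(e,\omega))(t)=p(\varphi'((e,\omega),t))=\varphi((e,\omega),t)=\omega(t)$, so $p\circ\lambda_\varepsilon(e,\omega)=\omega$.

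The construction of $\lambda^\varepsilon$ is entirely symmetric. I would use the same evaluation maps on $\underline{B}^\varepsilon\times\uparrow\mathbf{I}$, note that now $\varphi$ is stationary on $[\varepsilon,1]$ with $\varphi\circ\partial^1=p\circ f'$, invoke Corollary \ref{cor.4.5}(ii) with $\alpha=1$ to obtain a lift $\varphi'$ satisfying $\varphi'\circ\partial^1=f'$, and set $\lambda^\varepsilon(e',\omega')(t)=\varphi'((e',\omega'),t)$; the endpoint condition $\lambda^\varepsilon(e',\omega')(1)=e'$ and the covering condition $p\circ\lambda^\varepsilon(e',\omega')=\omega'$ follow exactly as above. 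Alternatively one could derive this second half from the first by passing through the reflection functor $R$ (Corollary \ref{cor.3.13}), which converts stationarity on $[0,\varepsilon]$ into stationarity on $[1-\varepsilon,1]$, applying the first part with parameter $1-\varepsilon$ and reflecting back.

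Since the lifting steps are handed to us by Corollary \ref{cor.4.5}, no genuine obstacle remains; the only points requiring attention are purely formal, namely checking via the exponential law that the adjoint maps $\lambda_\varepsilon,\lambda^\varepsilon$ are directed into $\underline{E}^{\uparrow\mathbf{I}}$ — precisely the verification already carried out for $\lambda^s_\alpha$ in Theorem \ref{thm.5.2}.
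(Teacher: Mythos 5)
Your proof is correct, but it is organized differently from the paper's. The paper derives the corollary directly from the \emph{statement} of Theorem \ref{thm.5.2}: it takes the semistationary lifting pair $(\lambda^s_\alpha)$ already guaranteed there and conjugates it by the reparametrisation isomorphism $\theta$ of Corollary \ref{cor.4.5}, setting $\widetilde{\omega}=\omega\circ\theta$ (so that $(e,\omega)\in\underline{B}_\varepsilon$ gives $(e,\widetilde{\omega})\in\underline{B}^s_0$) and defining in one line $\lambda_\varepsilon(e,\omega)=\lambda^s_0(e,\widetilde{\omega})\circ\theta^{-1}$, then checking the two conditions; $\lambda^\varepsilon$ is handled symmetrically. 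You instead re-run the \emph{proof} of Theorem \ref{thm.5.2} on the universal example: you take $\underline{X}=\underline{B}_\varepsilon$ with the evaluation homotopy $\varphi((e,\omega),t)=\omega(t)$, observe it is stationary on $[0,\varepsilon]$, invoke Corollary \ref{cor.4.5}(i) as the lifting engine in place of Theorem \ref{thm.4.2}, and pass to the exponential adjoint. The two arguments unwind to the same computation, since Corollary \ref{cor.4.5} is itself Theorem \ref{thm.4.2} conjugated by $\theta$; the difference is where the reparametrisation lives. The paper's route is shorter because it reuses the adjointness bookkeeping already done in Theorem \ref{thm.5.2} and only writes the $\theta$-conjugation formula; your route redoes that bookkeeping but makes the structural analogy transparent (Corollary \ref{cor.5.3} is to Corollary \ref{cor.4.5} exactly as Theorem \ref{thm.5.2} is to Theorem \ref{thm.4.2}) and never manipulates $\theta$ explicitly. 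Your side remark that $\lambda^\varepsilon$ could alternatively be obtained from $\lambda_{1-\varepsilon}$ for the opposite fibration via the reflection functor is also sound (it is the same device as Example \ref{ex.5.6}), though it needs Corollary \ref{cor.3.13} to know $p^{op}$ is again a directed weak fibration.
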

\begin{proof}
For a path $\omega\in d\underline{B}$, consider $\widetilde{\omega}=\omega\circ \theta$, i.e.,
\[\widetilde{\omega}(t)=\left\{ \begin{array}{ll}
\omega(2\varepsilon t), & \mbox{if $0\leq t\leq \frac{1}{2} $},\\
\omega(2(1-\varepsilon)t+2\varepsilon-1), & \mbox{if $\frac{1}{2}\leq t\leq 1 $}.\end{array} \right.
\]
If $(e,\omega)\in \underline{B}_{\varepsilon}$, then $(e,\widetilde{\omega})\in \underline{B}^s_0$, and we consider $\lambda_0^s(e,\widetilde{\omega})\in \underline{E}^{\uparrow \mathbf{I}}$ and then we can define $\lambda_{\varepsilon}(e,\omega)=\lambda^s_0(e,\widetilde{\omega})\circ \theta^{-1}$.
For this we have $\lambda_{\varepsilon}(e,\omega)(0)=\lambda^s_0(e,\widetilde{\omega})(\theta^{-1})(0))= \lambda^s_0(e,\widetilde{\omega})(0)=e$ and
$p\circ \lambda_{\varepsilon}(e,\omega)=p\circ \lambda^s_0(e,\omega)\circ \theta^{-1}=\widetilde{\omega}\circ\theta^{-1}=\omega$.
We proceed similar for $\lambda^{\varepsilon}$.
\end{proof}
\begin{thm}\label{thm.5.4}
If $p:\underline{E}\rightarrow \underline{B}$ is a directed weak fibration with $\underline{E}\neq \emptyset$ and $\underline{B}$ is a directed path connected space, then $p$ is surjective and fibres of $p$ have all the same directed homotopy type.
\end{thm}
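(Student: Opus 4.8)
The plan is to treat the two conclusions separately: surjectivity will follow at once from the path–lifting Corollary \ref{cor.3.6}, while the equality of fibre d-homotopy types will be obtained by a fibre–transport construction based on the honest liftings provided by Corollary \ref{cor.4.5}. For surjectivity, fix $e_*\in|\underline E|$ (possible since $\underline E\neq\emptyset$) and set $b_*=p(e_*)$, so $b_*\in\mathrm{im}\,p$. Given an arbitrary $b\in|\underline B|$, directed path connectedness of $\underline B$ provides a finite zigzag $b_*=c_0,c_1,\dots,c_n=b$ in which each consecutive pair is joined by a directed path $a\in d\underline B$, oriented either as $c_i\preceq c_{i+1}$ or as $c_{i+1}\preceq c_i$. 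Arguing by induction, if $c_i=p(e_i)$ I apply Corollary \ref{cor.3.6} to the connecting path with $\alpha=0$ when $c_i=a(0)$ and with $\alpha=1$ when $c_i=a(1)$; the lift $a'_\alpha$ then satisfies $p\bigl(a'_\alpha(1-\alpha)\bigr)=a(1-\alpha)=c_{i+1}$, so $c_{i+1}\in\mathrm{im}\,p$. Hence $b\in\mathrm{im}\,p$ and $p$ is surjective.

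For the fibres, write $F_b=p^{-1}(b)$ with the subspace d-structure and $i_b:F_b\hookrightarrow\underline E$. Fix a directed path $a\in d\underline B$ with $a(0)=b_0$, $a(1)=b_1$. Reparametrising $a$ so that $\varphi(e,t)=a(t)$ on $F_{b_0}\times\uparrow\mathbf I$ is lower semistationary gives $\varphi\circ\partial^0=p\circ i_{b_0}$, and Corollary \ref{cor.4.5}(i) yields an honest lift $\varphi'$ with $\varphi'\circ\partial^0=i_{b_0}$ and $p\circ\varphi'=\varphi$; I define the forward transport $t_a:F_{b_0}\to F_{b_1}$ by $t_a(e)=\varphi'(e,1)$. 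Symmetrically, the upper semistationary reparametrisation together with Corollary \ref{cor.4.5}(ii) (the case $\alpha=1$) produces a lift $\psi'$ with $\psi'\circ\partial^1=i_{b_1}$, and I define the backward transport $s_a:F_{b_1}\to F_{b_0}$ by $s_a(e')=\psi'(e',0)$. Both are d-maps, being composites of the directed lifts with the directed evaluation maps $\underline E^{\uparrow\mathbf I}\to\underline E$.

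The heart of the proof — and the step I expect to be the main obstacle — is to show $s_a\circ t_a\simeq_d\mathrm{id}_{F_{b_0}}$ and $t_a\circ s_a\simeq_d\mathrm{id}_{F_{b_1}}$. The obstacle is intrinsically directed: the classical Dold argument shows that transport along $a$ is inverse to transport along the reverse path $\bar a$, using $\bar a\ast a\simeq c_{b_0}$, but $\bar a$ need not be a directed path, so this route is closed. Instead the two one-sided transports $t_a$ (lifted from the bottom) and $s_a$ (lifted from the top), \emph{built from the same path} $a$, must be shown to be mutually inverse directly. Concretely, for $e\in F_{b_0}$ both $\varphi'(e,\cdot)$ and the lift $\Psi'(e,\cdot)$ obtained by applying the top–lifting of Corollary \ref{cor.4.5}(ii) to the map $t_a$ along $a$ are lifts of $a$ sharing the common upper value $t_a(e)$, while their lower values are $e$ and $s_a(t_a(e))$. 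I would connect these by filling the directed square $K(e,s,t)=a(t)$ (constant in $s$) by a lift $G$ prescribed on three edges — $s=0$ by $\varphi'(e,\cdot)$, $s=1$ by $\Psi'(e,\cdot)$, and $t=1$ constantly at $t_a(e)$ — so that the remaining edge $s\mapsto G(e,0,s)$ is a d-homotopy from $e$ to $s_a(t_a(e))$ lying over the constant path $b_0$, i.e. a vertical d-homotopy $\mathrm{id}_{F_{b_0}}\stackunder{(p)}{\simeq_d}s_a\circ t_a$. Filling this square is a \emph{relative} lifting problem (rel the edge $t=1$, with both ends $s=0,1$ prescribed), which is not the bare dWCHP; carrying it out amounts to a uniqueness-up-to-vertical-d-homotopy statement for lifts of a directed path that agree at one endpoint, and I would establish it by reducing the square, via the directed reparametrisations of Section 4, to the semistationary covering homotopy property of Theorem \ref{thm.4.2}. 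The relation $t_a\circ s_a\simeq_d\mathrm{id}_{F_{b_1}}$ follows symmetrically, e.g. by passing to the reflected fibration $\underline E^{op}\to\underline B^{op}$, which is again a directed weak fibration by Corollary \ref{cor.3.13} and interchanges the two faces.

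Finally I assemble the global conclusion. By the previous two paragraphs, every elementary step of the connecting zigzag — a single directed path between $c_i$ and $c_{i+1}$ — induces a d-homotopy equivalence $F_{c_i}\simeq_d F_{c_{i+1}}$, with $t_a$ and $s_a$ as mutually inverse d-homotopy equivalences. Composing these equivalences along the zigzag produces a d-homotopy equivalence $F_{b_*}\simeq_d F_b$ for every $b\in|\underline B|$. Since $b$ was arbitrary, all fibres of $p$ have the same directed homotopy type, which completes the proof.
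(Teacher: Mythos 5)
Your surjectivity argument is correct and is essentially the paper's (Corollary \ref{cor.3.6} plus a zigzag induction), and your transports $t_a$, $s_a$ are the same two maps the paper calls $f$ and $g$. But the step you yourself single out as the heart of the proof --- showing $s_a\circ t_a\simeq_d \mathrm{id}_{F_{b_0}}$ --- is a genuine gap, and the route you sketch for it does not work. Filling your square $G$ with values prescribed on the three edges $s=0$, $s=1$ and $t=1$ is a relative lifting problem, and neither the dWCHP nor its semistationary reformulation (Theorem \ref{thm.4.2}, Corollary \ref{cor.4.5}) gives any control on more than the single face $t=\alpha$: a lift produced by these results will in general disagree with $\varphi'$ and $\Psi'$ on the faces $s=0,1$. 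Classically one converts such a three-face problem into a one-face problem by a homeomorphism of the square carrying $(I\times\{1\})\cup(\partial I\times I)$ onto a single edge, but that homeomorphism is not monotone, hence not a d-map, so no directed reparametrisation in the sense of Section 4 can effect the reduction; and even in the undirected case Dold fibrations do not admit relative lifting without substantial extra argument. So the ``uniqueness up to vertical d-homotopy of lifts agreeing at one endpoint'' that you invoke is precisely what is missing, not a lemma you can reduce to Theorem \ref{thm.4.2}. Note also that your square would produce a \emph{single} directed homotopy $\mathrm{id}_{F_{b_0}}\preceq_d s_a\circ t_a$, which is strictly stronger than what the theorem needs and than what the paper ever obtains.

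The paper avoids this issue entirely, and the way it does so explains why Theorem \ref{thm.5.4} is placed after Theorem \ref{thm.5.2}: it uses the directed semistationary lifting pair in the form of Corollary \ref{cor.5.3}, so that transport is effected by d-maps $\lambda_{1/3}$, $\lambda^{2/3}$ defined on the whole path spaces $\underline{B}_{1/3}$, $\underline{B}^{2/3}$, i.e.\ functorially in the pair (point, path). This functoriality is what lets one write down explicit directed homotopies such as $\varphi(e,t)=\lambda^{2/3}(\lambda_{1/3}(e,\widetilde{\omega})(t),\theta_t)(0)$ and $\psi(e,t)=\lambda^{2/3}(e,(\widetilde{\omega})_t)(t)$, where $\theta_t$ and $(\widetilde{\omega})_t$ are truncations of $\widetilde{\omega}$; these exhibit $\varphi_0\preceq_d g\circ f$ and $\varphi_0\preceq_d \mathrm{id}$, so $g\circ f\simeq_d\mathrm{id}$ by a zigzag through the common map $\varphi_0$ --- no path reversal, no relative lifting, no uniqueness statement. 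Your ad hoc lifts from Corollary \ref{cor.4.5} cannot be substituted into such formulas, because they are not functions of the path variable. To repair your proof, replace Corollary \ref{cor.4.5} by Corollary \ref{cor.5.3} and construct these two homotopies (and their analogues for $f\circ g$); that is exactly the paper's argument.
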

\begin{proof} Let $e$ be a point in $\underline{E}$  and $b\in \underline{B}$ an arbitrary point. If $p(e)\preceq b$ let $\omega\in d\underline{B}$ be a
d-path such that $\omega(0)=p(e)$ and $\omega(1)=b$. Moreover obviously we can suppose that $\omega$ is lower semistationary. Then by Cor.
\ref{cor.3.6}, there is $\omega'\in d\underline{E}$ with $\omega'(0)=e$ and $p\circ \omega'=\omega$. It follows that $b=\omega(1)=p(\omega'(1))$, i.e.,
$e\in p(|\underline{E}|)$. If $b\preceq p(e)$, and $\widetilde{\omega}\in d\widetilde{B}$ satisfies $\widetilde{\omega}(0)=b$, and for $t\in [0,1/2]$,
$\widetilde{\omega}(t)=\widetilde{\omega}(1)=p(e)$, then for $\widetilde{\omega}'\in d\underline{E}$ satisfying $\widetilde{\omega}'(1)=e $ and $p\circ
\widetilde{\omega}'=\widetilde{\omega}$, we have $b=\widetilde{\omega}(0)=p(\widetilde{\omega}'(0))$. Finally, if $b$ and $p(e)$ are joined by directed paths via the points
$b_1,...,b_n\in \underline{B}$, we deduce at first that $b_n\in p(\underline{E})$, then $b_{n-1}\in p(\underline{E)}$ and so on.

For the second part of the theorem, consider  $b_1,b_2\in \underline{B}$ and suppose at first that there is a d-path $\omega$ with ends $\omega(0)=b_1$,
$\omega(1)=b_2$. We associate to this path the path $\widetilde{\omega}\in d\underline{B}$ defined by

\[\widetilde{\omega}(t)=\left\{ \begin{array}{ll}
\omega(0)=b_1, & \mbox{if $0\leq t\leq 1/3 $},\\
\omega(3t-1), & \mbox{if $1/3\leq t\leq 2/3 $},\\\omega(1)=b_2,&\mbox{if $2/3\leq
t\leq 1$}.\end{array} \right.
\]
Now, by Corollary \ref{cor.5.3} we can consider the directed maps $\lambda_{1/3}:\underline{B}_{1/3}\rightarrow \underline{E}^{\uparrow \mathbf{I}}$ and
$\lambda^{2/3}:\underline{B}^{2/3}\rightarrow \underline{E}^{\uparrow \mathbf{I}} $. If $e\in \uparrow p^{-1}(b_1)$, $p(e)=b_1=\omega(0)=
\widetilde{\omega}(t), (\forall) t\in [0,1/3]$, such that $(e,\widetilde{\omega})\in \underline{B}_{1/3}$. And because $p\lambda_{1/3}(e,\widetilde{\omega})(1)=
\widetilde{\omega}(1)=\omega(1)=b_2$,$\Rightarrow $ $\lambda_{1/3}(e,\widetilde{\omega})(1)\in \uparrow p^{-1}(b_2)$. In this way we obtain a directed map
$$f:\uparrow p^{-1}(b_1)\rightarrow \uparrow p^{-1}(b_2), f(e)=\lambda_{1/3}(e,\widetilde{\omega})(1)$$
Analogously, we have a directed map
$$ g:\uparrow p^{-1}(b_2)\rightarrow \uparrow p^{-1}(b_1), g(e')=\lambda^{2/3}(e',\widetilde{\omega})(0)$$
based on the fact that $e'\in \uparrow p^{-1}(b_2)\Rightarrow p(e')=b_2=\widetilde{\omega}(t),(\forall) t\in [2/3,1]\Rightarrow (e',\widetilde{\omega})\in \underline{B}^{2/3}$,
and $p\lambda^{2/3}(e',\widetilde{\omega})(0)=\widetilde{\omega}(0)=b_1$.
Now
$$ (g\circ f)(e)=\lambda^{2/3}(\lambda_{1/3}(e,\widetilde{\omega})(1),\widetilde{\omega})(0).$$

Now for each $t\in [0,1]$ we consider  the following d-path:

\[\theta_t(t')=\left\{ \begin{array}{ll}
\widetilde{\omega}(\frac{3}{2}tt'), & \mbox{if $0\leq t'\leq \frac{2}{3} $},\\
\widetilde{\omega}(t), & \mbox{if $\frac{2}{3}\leq t'\leq 1 $}.\end{array} \right.
\]

Then we define

$$ \varphi:\uparrow p^{-1}(b_1)\times \uparrow \mathbf{I}\rightarrow \uparrow p^{-1}(b_1)$$
by
$$ \varphi(e,t)=\lambda^{2/3}(\lambda_{1/3}(e,\widetilde{\omega})(t),\theta_t)(0),$$

The map  is well defined because $(\lambda_{1/3}(e,\widetilde{\omega})(t),\theta_t)\in \underline{B}^{2/3}$.
For the directed homotopy $\varphi$, we have $\varphi(e,1)=(g\circ f)(e)$, and $\varphi(e,0)=\lambda^{2/3}(\lambda_{1/3}
(e,\widetilde{\omega})(0),\theta_0)(0)=\lambda^{2/3}(e,(\widetilde{\omega})_0)(0)$.
And, therefore $\underline{\varphi:\varphi_0\preceq_d g\circ f}$.
Now we define a new homotopy, $\psi:\uparrow p^{-1}(b_1)\times \uparrow \mathbf{I}\rightarrow \uparrow p^{-1}(b_1)$, by
$$\psi(e,t)=\lambda^{2/3}(e,(\widetilde{\omega})_t)(t),$$
for which $\psi(e,1)=\lambda^{2/3}(e,\widetilde{\omega})(1)=e$, and $\psi(e,0)=\lambda_{1/3}(e,(\widetilde{\omega})_0)(0)
=\varphi_0(e)$. Therefore $\underline{\psi:\varphi_0\preceq_d id_{\uparrow p^{-1}(b_1)}}$. From the above underlined relations we have that $g\circ f
\simeq_d id_{\uparrow p^{-1}(b_1)}$.
For $f\circ g$, we have
$$ (f\circ g)(e')=\lambda_{1/3}(\lambda^{2/3}(e'\omega)(0),\omega)(1),$$
and for this we consider the homotopy $\varphi':\uparrow p^{-1}(b_2)\times \mathbf{I} \rightarrow \uparrow p^{-1}(b_2)$, defined by
$$\varphi'(e',t)=\lambda_{1/3}(\lambda^{2/3}(e',\widetilde{\omega})(t),\theta'_t)(1),$$
where
\[\theta'_t(t')=\left\{ \begin{array}{ll}
\widetilde{\omega}(t), & \mbox{if $0\leq t'\leq \frac{1}{3} $},\\
\widetilde{\omega}(\frac{(3t'-1)t}{2}), & \mbox{if $\frac{1}{3}\leq t'\leq 1 $}.\end{array} \right.
\]

For this we have $\varphi'(e',0)=(f\circ g)(e')$, and $\varphi'(e',1)=\lambda_{1/3}(\lambda^{2/3}(e',\widetilde{\omega})(1),(\widetilde{\omega})_1)(1)=
\lambda_{1/3}(e',\theta'_1)(1)$. Therefore $\underline{\varphi':f\circ g\preceq_d \varphi'_1}$. Then we define $\psi':\uparrow p^{-1}(b_2)\times
\mathbf{I}\rightarrow \uparrow p^{-1}(b_2)$ by
$$ \psi'(e',t)=\lambda_{1/3}(e',(\widetilde{\omega})_t)(t).$$
For this we have $\psi'(e',0)=e'$ and $\psi'(e',1)=\varphi'_1$.
Therefore $\underline{id_{\uparrow p^{-1}(b_2)}\preceq_d \varphi'_1}$. From the last underlined relations we have that
$f\circ g\simeq_d id_{\uparrow p^{-1}(b_2)}$. Therefore, if $b_1\preceq_d b_2$ (or $b_2\preceq_d b_1)$, we have proved that
$\uparrow p^{-1}(b_1)\simeq_d \uparrow p^{-1}(b_2)$.

If $b_1$ and $b_2$ can't be joined by a directed path, but if for example $b_1\preceq b_3$ and $b_2\preceq b_3$, then as proved above
 $\uparrow p^{-1}(b_1)\simeq_d \uparrow p^{-1}(b_3)$ and $\uparrow p^{-1}(b_2)\simeq_d \uparrow p^{-1}(b_3)$, which again implies
 $\uparrow p^{-1}(b_1)\simeq_d \uparrow p^{-1}(b_2)$. And the general case follows in the same way.

\end{proof}
\begin{ex}\label{ex.5.5}If $p:\underline{E}\rightarrow \underline{B}$ is a directed weak fibration with $(\lambda^s_\alpha)_{\alpha=0,1}$ a directed semistationary lifting pair, and if $f:\underline{B}'\rightarrow \underline{B}$ is a directed map, then for the directed weak fibration $p_f:\underline{E}_f\rightarrow \underline{B}'$ (Prop. \ref{prop.3.14} )we have the following:\\\\\
$\underline{B}'^s_0=\{((b',e),\omega')\in \underline{E}_f\times \underline{B}'^{\uparrow \mathbf{I}}|\omega'(t)=b',(\forall)t\in [0,\frac{1}{2}]\}$, $\underline{B}'^s_1=\{((b',e),\omega')\in \underline{E}_f\times \underline{B}'^{\uparrow \mathbf{I}}|\omega'(t)=b',(\forall)t\in [\frac{1}{2},1]\}$. For these d-spaces we can define ${\lambda'}^s_\alpha:\underline{B}'^s_\alpha\rightarrow \underline{E}_f^{\uparrow \mathbf{I}}$ , $\alpha=0,1$, by $\lambda'^s_\alpha((b',e),\omega'))(t)=(\omega'(t),\lambda^s_\alpha(e,f\circ \omega')(t))$. For this we have $\lambda'^s_\alpha((b',e),\omega'))(\alpha)=(\omega'(\alpha),\lambda^s_\alpha(e,f\circ \omega')(\alpha))=(b',e)$ and $p_f((\lambda'^s_\alpha((b',e),\omega'))(t))=\omega'(t)$. Therefore, $(\lambda'^s_\alpha)_{\alpha=0,1}$ is a directed semistationary lifting pair for $p_f$.
\end{ex}
\begin{ex}\label{ex.5.6}Let $p:\underline{E}\rightarrow \underline{B}$ be directed weak fibration with $(\lambda^s_\alpha)_{\alpha=0,1}$ a directed semistationary lifting pair. Consider the opposite directed weak fibration $p:\underline{E}^{op}\rightarrow \underline{B}^{op}$ (Corollary \ref{cor.3.13}). For this map we have:$(\underline{B}^{op})^s_0=\{(e,\omega)\in \underline{E}^{op}\times (\underline{B}^{op})^{\uparrow \mathbf{I}}|\omega(t)=p(e),(\forall) t\in [0,\frac{1}{2}]\}$  and  $(\underline{B}^{op})^s_1=\{(e,\omega)\in \underline{E}^{op}\times (\underline{B}^{op})^{\uparrow \mathbf{I}}|\omega(t)=p(e),(\forall) t\in [\frac{1}{2},1]\}$.

Now it's easy to see that if $(e,\omega)\in (B^{op})^s_\alpha$, then $(e,\omega^{op})\in \underline{B}^s_{1-\alpha}$, such that we can define $(\lambda^{op})^s_\alpha:(\underline{B}^{op})^s_\alpha\rightarrow (\underline{E}^{op})^{\uparrow \mathbf{I}}$ by

$$ (\lambda^{op})^s_\alpha(e,\omega)=(\lambda^s_{1-\alpha}(e,\omega^{op}))^{op}$$

For these d-maps we have:$ (\lambda^{op})^s_\alpha(e,\omega)(\alpha)=\lambda^s_{1-\alpha}(e,\omega{op})(1-\alpha)=e$,\\ and
$p(\lambda^{op})^s_\alpha(e,\omega)(t)=(p\lambda^s_{1-\alpha}(e,\omega^{op})(1-t)=\omega^{op}(1-t)=\omega(t)$.

Therefore $((\lambda^{op})^s_\alpha))_{\alpha=0,1}$ is a directed semistationary lifting pair for $$p^{op}:=p:\underline{E}^{op}\rightarrow \underline{B}^{op}.$$
\end{ex}

\begin{rem}\label{rem.5.7}A theorem similar to Theorem \ref{thm.5.2} exists also in the undirected case. But in that case it is sufficient to have a  lifting function for the stationary path on the interval $[0,\frac{1}{2}]$ since there the spaces $B^s_0$ and $B^s_1$ are homeomorphic by the correspondence $(e,\omega)\in B^s_0\rightarrow (e,\omega^{op})\in B^s_1$. And if $\lambda^s_0$ exists, then $\lambda^s_1$ is defined by $(\lambda^s_1(e,\omega)=(\lambda^s_0(e,\omega^{op}))^{op}$. In the general directed case, for an arbitrary directed map $p:\underline{E}\rightarrow \underline{B}$, the spaces $\underline{B}^s_0$ and $\underline{B}^s_1$ are independent. But if $p$ is a directed weak fibration, then we have the following theorem.
\end{rem}
\begin{thm}\label{thm.5.8}If $p:\underline{E} \rightarrow \underline{B}$ is a directed weak fibration, then the d-spaces $\underline{B}^s_0$ and $\underline{B}^s_1$ are d-homotopy equivalent.
\end{thm}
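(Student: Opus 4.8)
The plan is to prove that both $\underline{B}^s_0$ and $\underline{B}^s_1$ are d-homotopy equivalent to $\underline{E}$, and then to conclude by transitivity. In each case I use the projection $r_\alpha:\underline{B}^s_\alpha\rightarrow\underline{E}$, $r_\alpha(e,\omega)=e$, and the section $i_\alpha:\underline{E}\rightarrow\underline{B}^s_\alpha$, $i_\alpha(e)=(e,c_{p(e)})$, where $c_b$ denotes the constant d-path at $b$. Both are d-maps and $r_\alpha\circ i_\alpha=1_{\underline{E}}$, so the whole problem reduces to showing $i_\alpha\circ r_\alpha\simeq_d 1_{\underline{B}^s_\alpha}$ for $\alpha=0,1$.

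For $\alpha=0$ no fibration hypothesis is needed. Define $H:\underline{B}^s_0\times\uparrow\mathbf{I}\rightarrow\underline{B}^s_0$ by $H((e,\omega),s)=(e,\omega_s)$ with $\omega_s(t)=\omega(st)$. Since $st\in[0,\tfrac12]$ whenever $t\in[0,\tfrac12]$, the path $\omega_s$ is still stationary at $p(e)$ on $[0,\tfrac12]$, so $H$ lands in $\underline{B}^s_0$; and $H$ is directed because $(s,t)\mapsto st$ is a directed map $\uparrow\mathbf{I}\times\uparrow\mathbf{I}\rightarrow\uparrow\mathbf{I}$ (a product of nonnegative increasing functions is increasing), so that $(e,\omega,s)\mapsto(e,\omega(s\,\cdot))$ is directed through the evaluation map. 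As $H$ interpolates from $i_0\circ r_0$ at $s=0$ to $1_{\underline{B}^s_0}$ at $s=1$, this gives $\underline{B}^s_0\simeq_d\underline{E}$.

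The case $\alpha=1$ is where the weak fibration hypothesis becomes essential: the same contraction would require reparametrising $\omega$ toward its endpoint, i.e. interpolating from the identity reparametrisation to the constant reparametrisation $t\mapsto 1$, and no such interpolation is directed (testing on the diagonal of the $(s,t)$-square forces non-monotonicity). Instead I will use the semistationary lifting map $\lambda^s_1$. For $(e,\omega)\in\underline{B}^s_1$ put $\gamma=\lambda^s_1(e,\omega)$, so $\gamma(1)=e$, $p\circ\gamma=\omega$, and $\gamma(t)\in p^{-1}(p(e))$ for $t\geq\tfrac12$. The homotopy $i_1\circ r_1\simeq_d 1$ is then obtained by sliding the base point backward along $\gamma$ while compressing the nonconstant part of $\omega$, the relation $p\circ\gamma=\omega$ guaranteeing that each intermediate pair lies in $\underline{B}^s_1$; since the base point must both begin and end at $e$, this is realised as a short zig-zag of directed homotopies, the return to $e$ being furnished by the fact that two lifts of (reparametrisations of) one directed path sharing an endpoint are vertically d-homotopic, $\stackunder{(p)}{\simeq_d}$, a consequence of the dWCHP. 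An essentially equivalent, more symmetric route — closer to Theorem \ref{thm.2.4}(ii) — is to build the equivalence directly via $F(e,\omega)=(\lambda^s_0(e,\omega)(1),\omega\circ\mu)$ and $G(e,\omega)=(\lambda^s_1(e,\omega)(0),\omega\circ\nu)$, with the fixed increasing reparametrisations $\mu(t)=\min(2t,1)$ and $\nu(t)=\max(2t-1,0)$ that move the stationary half from $[0,\tfrac12]$ to $[\tfrac12,1]$ and back, and then to verify $G\circ F\simeq_d 1_{\underline{B}^s_0}$ and $F\circ G\simeq_d 1_{\underline{B}^s_1}$.

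I expect the main obstacle to be twofold and to reside entirely in this second case. First, one must prove uniqueness of lifts up to vertical d-homotopy in the weak setting, so as to identify the twice-transported base point $\lambda^s_1(\cdots)(0)$ with the original $e$. Second, because directed homotopies cannot be reversed, every elementary homotopy in the zig-zag must be oriented consistently, and each reparametrisation family must be checked to be genuinely directed, i.e. to induce a monotone map on $\uparrow\mathbf{I}\times\uparrow\mathbf{I}$; the saving grace is that $\omega$ is constant precisely on the half-interval where the raw reparametrisation would fail to be monotone, so the composite with $\omega$ remains directed there. Once these directedness verifications are dispatched, concatenating the pieces yields $\underline{B}^s_0\simeq_d\underline{E}\simeq_d\underline{B}^s_1$, which is the assertion.
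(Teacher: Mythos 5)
Your overall strategy --- contract each $\underline{B}^s_\alpha$ onto the copy of $\underline{E}$ given by the constant-path section $i_\alpha(e)=(e,c_{p(e)})$, then conclude by transitivity --- is sound, and your $\alpha=0$ argument is correct and complete. The genuine gap is in the $\alpha=1$ half, which you yourself identify as the crux: it rests on a false claim followed by a hand-wave. The false claim is that no directed interpolation from the identity reparametrisation to the constant reparametrisation $t\mapsto 1$ exists. Consider
\[
\sigma_s(t)=t+s(1-t)=1-(1-s)(1-t).
\]
This is weakly increasing in $t$ (slope $1-s\geq 0$) and weakly increasing in $s$ (slope $1-t\geq 0$), so $(s,t)\mapsto\sigma_s(t)$ is a d-map $\uparrow\mathbf{I}\times\uparrow\mathbf{I}\rightarrow\uparrow\mathbf{I}$; your diagonal test gives $2t-t^2$, which is increasing, not non-monotone. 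The point you missed is one of orientation, not existence: $\sigma_0=\mathrm{id}$ and $\sigma_1\equiv 1$, so $K((e,\omega),s)=(e,\omega\circ\sigma_s)$ is a directed homotopy from $1_{\underline{B}^s_1}$ \emph{to} $i_1\circ r_1$, rather than from $i_1\circ r_1$ to the identity as in your $\alpha=0$ case. It stays inside $\underline{B}^s_1$ because $\sigma_s(t)\geq t\geq\frac{1}{2}$ for $t\in[\frac{1}{2},1]$, where $\omega$ is stationary at $p(e)$; directedness of $K$ follows by the same exponential-law argument you used for $H$, since $(a,b)\mapsto a+b-ab$ is monotone in both variables on $[0,1]^2$. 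Since $\simeq_d$ is by definition the equivalence relation generated by $\preceq$, a directed homotopy in either direction gives $i_1\circ r_1\simeq_d 1_{\underline{B}^s_1}$, and your transitivity argument finishes the proof.

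With this one-line repair your route is not only correct but simpler and strictly more general than the paper's: it proves $\underline{B}^s_0\simeq_d\underline{E}\simeq_d\underline{B}^s_1$ for an \emph{arbitrary} d-map $p$, making the weak-fibration hypothesis unnecessary for the statement as written, whereas the paper builds maps $f(e,\omega)=(\lambda^s_0(e,\omega)(1),\omega_+)$ and $g(e',\omega')=(\lambda^s_1(e',\omega')(0),\omega'_-)$ from the semistationary lifting pair --- exactly your ``more symmetric route'' with $\mu$ and $\nu$ --- and then verifies $g\circ f\simeq_d 1$ and $f\circ g\simeq_d 1$ through explicit zig-zags of four auxiliary homotopies. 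By contrast, the two fallbacks you actually offer for $\alpha=1$ do not stand as written: the asserted ``two lifts of one directed path sharing an endpoint are vertically d-homotopic'' is nowhere proved in the paper, is not an evident consequence of the dWCHP (directedness blocks the usual reversal tricks used in Dold's setting), and would need its own substantial argument; and your second route merely restates the paper's construction while omitting the entire verification, which is where all the work lies. So: fix the $\alpha=1$ contraction as above and drop the lifting-pair machinery altogether, or else commit to proving the zig-zag identities in full.
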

\begin{proof}By hypothesis $p$ admits a directed semistationary lifting pair $(\lambda^s_\alpha)_{\alpha=0,1}$, which we use in the proof. If $\omega\in d\underline{B}$ is lower semistationary, $\omega(t)=\omega(\frac{1}{2}),(\forall)t\in [0,\frac{1}{2}]$, then we define $\omega_{+}\in d\underline{B}$ by

\[\omega_{+}(t)=\left\{ \begin{array}{ll}
\omega(2t), & \mbox{if $0\leq t\leq \frac{1}{2}$},\\
\omega(1), & \mbox{if $\frac{1}{2}\leq t\leq 1 $}.\end{array} \right.
\]
This path is upper semistationary, and if $(e,\omega)\in \underline{B}^s_0)$, we have $\lambda^s_0(e,\omega)(1)\in \underline{E}$ and $p(\lambda^s_0(e,\omega)(1))=\omega(1)=\omega_{+}(1)$, so that we can define the directed map $f:\underline{B}^s_0\rightarrow \underline{B}^s_1$ by
$$ f(e,\omega)=(\lambda^s_0(e,\omega)(1),\omega_{+}).$$
Similarly we define $g:\underline{B}^s_1 \rightarrow B^s_0$ by
$$ g(e',\omega')=(\lambda^s_1(e',\omega')(0),\omega'_{-}),$$
where

\[\omega'_{-}(t)=\left\{ \begin{array}{ll}
\omega'(0), & \mbox{if $0\leq t\leq \frac{1}{2}$},\\
\omega'(2t-1), & \mbox{if $\frac{1}{2}\leq t\leq 1 $}.\end{array} \right.
\]
It follows that for $g\circ f:\underline{B}^s_0\rightarrow \underline{B}^s_0$ we have
$$(g\circ f)(e,\omega)=(\lambda^s_1(\lambda^s_0(e,\omega)(1),\omega_{+})(0),(\omega_{+})_{-}).$$

Now if $(e,\omega)\in \underline{B}^s_0 $ , then $\omega_t$, defined as above by $\omega_t(t')=\omega(tt')$ is lower semistationary and we can define
$\varphi:\underline{B}^s_0\times \uparrow \mathbf{I}\rightarrow \underline{B}^s_0$, by
$$\varphi((e,\omega),t)=(\lambda^s_1(\lambda^s_0(e,\omega)(t),(\omega_t)_{+})(0),((\omega_t)_{+})_{-}).$$
For this d-homotopy we have $\varphi((e,\omega),1)=(g\circ f)(e,\omega)$, and $\varphi((e,\omega),0)=(\lambda^s_1(e,\omega_0)(0),\omega_0)$. Hence
$\underline{\varphi:\varphi_0\preceq_d g\circ }f$. Then we define a new d-homotopy $\psi:\underline{B}^s_0\times \uparrow\mathbf{I}\rightarrow \underline{B}^s_0$ by
$$\psi((e,\omega),t)=(\lambda^s_1(e,\omega_0)(t),\omega_t).$$
This is well defined since for $t'\in [0,1]$ and $t\in [0,1], tt'\in [0,1/2]$ such that $\omega_t(t')=\omega(tt')=p(e)=p\lambda^s_1(e,\omega_0)(t)=\omega_0(t)$, and therefore $(\lambda^s_1(e,\omega_0)(t),\omega_t)\in \underline{B}^s_0$.

This induces $\underline{\psi:\varphi_0\preceq_d id_{\underline{B}^s_0}}$. From the last two underlined relations, we deduce that
$g\circ f\simeq_d id_{\underline{B}^s_0}$.

For $f\circ g$, we have
$$ (f\circ g)(e',\omega')=(\lambda^s_0(\lambda^s_1(e',\omega')(0),\omega'_{-})(1),((\omega'_{-})_{+}),$$
and then define $\varphi':\underline{B}^s_1\times \uparrow \mathbf{I}\rightarrow \underline{B}^s_1$ by
$$\varphi'((e',\omega'),t)=(\lambda^s_0(\lambda^s_1(e',\omega')(t),(\omega'_t)_{-})(1),((\omega'_t)_{-})_{+}),$$
for which we have $\underline{\varphi': f\circ g\preceq_d\varphi'_1}$. And further by the d-homotopy $\psi':\underline{B}^s_1\times \uparrow \mathbf{I}\rightarrow \underline{B}^s_1$, defined by
$$\psi'((e',\omega'),t)=(\lambda^s_0(e',\omega')(t),\omega'_t),$$
we have $\psi':\underline{id_{\underline{B}^s_1}\preceq_d \varphi'_1}$. The two last  underlined relations imply
$f\circ g \simeq_d id_{\underline{B}^s_1}$ . And by this the proof is completed.
\end{proof}
\begin{cor}\label{cor.5.9} If $p:\underline{E}\rightarrow \underline{B}$ is a directed weak fibration and $\varepsilon\in (0,1)$, then the d-spaces
$\underline{B}_{\varepsilon}$ and $\underline{B}^{\varepsilon}$ are d-homotopy equivalent.
\end{cor}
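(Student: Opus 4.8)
The plan is to deduce this from Theorem \ref{thm.5.8} by observing that the corollary is literally the case $\varepsilon=\tfrac12$ of that theorem, transported along a reparametrisation. Indeed $\underline{B}^s_0=\underline{B}_{1/2}$ and $\underline{B}^s_1=\underline{B}^{1/2}$, so it suffices to produce directed isomorphisms $\underline{B}_\varepsilon\cong\underline{B}^s_0$ and $\underline{B}^\varepsilon\cong\underline{B}^s_1$ and then push the d-homotopy equivalence of $\underline{B}^s_0$ and $\underline{B}^s_1$ across them. The transporting map is exactly the reparametrisation $\theta:\uparrow\mathbf{I}\to\uparrow\mathbf{I}$ used in Corollaries \ref{cor.4.5} and \ref{cor.5.3}: it is an increasing piecewise-linear bijection carrying $[0,\tfrac12]$ onto $[0,\varepsilon]$ and $[\tfrac12,1]$ onto $[\varepsilon,1]$, and (as already noted in Corollary \ref{cor.4.5}) its inverse $\theta^{-1}$ is again increasing, so both $\theta$ and $\theta^{-1}$ are directed and $\theta$ is a directed isomorphism of $\uparrow\mathbf{I}$.

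Concretely I would set $\Phi:\underline{B}_\varepsilon\to\underline{B}^s_0$, $\Phi(e,\omega)=(e,\omega\circ\theta)$, and $\Psi:\underline{B}^\varepsilon\to\underline{B}^s_1$, $\Psi(e,\omega)=(e,\omega\circ\theta)$. One first checks these respect the subspaces: if $(e,\omega)\in\underline{B}_\varepsilon$ then for $t\in[0,\tfrac12]$ one has $\theta(t)=2\varepsilon t\in[0,\varepsilon]$, whence $(\omega\circ\theta)(t)=\omega(2\varepsilon t)=p(e)$, so $(e,\omega\circ\theta)\in\underline{B}^s_0$; the branch of $\theta$ on $[\tfrac12,1]$ gives the analogous statement for $\Psi$. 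Precomposition with $\theta^{-1}$ provides the inverse maps, and the same computation (now using $\theta^{-1}([0,\varepsilon])\subset[0,\tfrac12]$) shows they land back in $\underline{B}_\varepsilon$, resp. $\underline{B}^\varepsilon$. Since precomposition with the directed isomorphism $\theta$ is a directed isomorphism of the exponential $\underline{B}^{\uparrow\mathbf{I}}$ (by the exponential law quoted after Definition \ref{def.5.1}) and the $\underline{E}$-coordinate is left untouched, $\Phi$ and $\Psi$ are directed isomorphisms.

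With these identifications, let $f:\underline{B}^s_0\to\underline{B}^s_1$ and $g:\underline{B}^s_1\to\underline{B}^s_0$ be the directed maps supplied by Theorem \ref{thm.5.8}, so $g\circ f\simeq_d\mathrm{id}$ and $f\circ g\simeq_d\mathrm{id}$. I would define $F=\Psi^{-1}\circ f\circ\Phi:\underline{B}_\varepsilon\to\underline{B}^\varepsilon$ and $G=\Phi^{-1}\circ g\circ\Psi:\underline{B}^\varepsilon\to\underline{B}_\varepsilon$. Conjugating the two relations then yields $G\circ F=\Phi^{-1}\circ(g\circ f)\circ\Phi\simeq_d\Phi^{-1}\circ\mathrm{id}\circ\Phi=\mathrm{id}_{\underline{B}_\varepsilon}$ and, symmetrically, $F\circ G\simeq_d\mathrm{id}_{\underline{B}^\varepsilon}$, giving $\underline{B}_\varepsilon\simeq_d\underline{B}^\varepsilon$.

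The branch computations for $\Phi$ and $\Psi$ are routine. The one point requiring care is the conjugation step: $\simeq_d$ is only a zig-zag of directed homotopies, not a single homotopy, so one must know it is preserved under pre- and post-composition with fixed directed maps. This is the mild obstacle, and it is handled by the general fact that if $H:\underline{X}\times\uparrow\mathbf{I}\to\underline{Y}$ is a directed homotopy $u\preceq v$ and $a,b$ are directed maps composable on the two sides, then $b\circ H\circ(a\times\mathrm{id}_{\uparrow\mathbf{I}})$ is a directed homotopy $b\,u\,a\preceq b\,v\,a$; applying this term by term converts a zig-zag between $g\circ f$ and $\mathrm{id}$ into a zig-zag between $\Phi^{-1}\circ(g\circ f)\circ\Phi$ and $\mathrm{id}_{\underline{B}_\varepsilon}$, which is what the argument needs.
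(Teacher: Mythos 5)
Your proof is correct and is essentially the paper's own approach: the paper states Corollary \ref{cor.5.9} without proof as an immediate consequence of Theorem \ref{thm.5.8}, the intended reduction being exactly the transport along the reparametrisation $\theta$ of Corollaries \ref{cor.4.5} and \ref{cor.5.3} that you carry out. Your explicit checks — that $\Phi$ and $\Psi$ are directed isomorphisms via the exponential law, and that $\simeq_d$ (a zig-zag of directed homotopies) is stable under conjugation by fixed directed maps — are precisely the details the paper leaves implicit, and they are sound.
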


\section{Directed fiber homotopy equivalence}
\begin{defn}\label{def.6.1}Let $p:\underline{E}\rightarrow \underline{B}$, $p':\underline{E}'\rightarrow \underline{B}$ and $f:E\rightarrow E'$ directed maps with $p'\circ f=p$ ($f$ is a morphism from $p$ to $p'$). We say that $f$ is a directed fibre homotopy equivalence if there exists $g:\underline{E}'\rightarrow \underline{E}$ a morphism from $p'$ to $p$, such that $g\circ f \stackunder{(p)}{\simeq_d}id_{\underline{E}}$ and $f\circ g\stackunder{(p')}{\simeq_d}id_{\underline{E}'}$.
\end{defn}
In usual homotopy theory, the WCHP for the maps $p$ and $p'$ ensure that $f$ is a directed fiber homotopy equivalence if it is a simple directed homotopy equivalence, \cite{Dold1}. The proof is not simple even in that case where some homotopies at the same time as their reverses are used.
This is not possible in the case of directed homotopy, but keeping the idea and modifying the calculations we succeeded to prove the following theorem.

\begin{thm}\label{thm.6.2}Let $p:\underline{E}\rightarrow \underline{B}$ , $p':\underline{E}'\rightarrow \underline{B}$ be directed weak fibrations.
Then a directed map $f:\underline{E}\rightarrow \underline{E}'$ over $\underline{B}$, $p'\circ f=p$, is a directed fibre homotopy equivalence if and only
if it is an ordinary directed homotopy equivalence.
\end{thm}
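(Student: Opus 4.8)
The forward implication is immediate: if $f$ is a directed fibre homotopy equivalence with fibrewise inverse $g$ as in Definition \ref{def.6.1}, then dropping the verticality conditions leaves $g\circ f\simeq_d \mathrm{id}_{\underline E}$ and $f\circ g\simeq_d \mathrm{id}_{\underline E'}$, so $f$ is an ordinary directed homotopy equivalence. All the content lies in the converse, and my plan is to adapt Dold's classical argument, replacing every classical use of a \emph{reversed} homotopy by the reverse-cylinder lifting supplied by Proposition \ref{prop.3.12} and by the controlled (semistationary) lifts of Theorem \ref{thm.4.2} and Corollary \ref{cor.4.5}.

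\emph{Step 1 (fibrewise replacement of an ordinary inverse).} Let $g':\underline E'\to\underline E$ be an ordinary directed homotopy inverse of $f$. From $f\circ g'\simeq_d \mathrm{id}_{\underline E'}$ and $p'\circ f=p$ one gets $p\circ g'=p'\circ f\circ g'\simeq_d p'$, i.e. a zigzag of directed homotopies between $p'$ and $p\circ g'$ as maps $\underline E'\to\underline B$. Using the dWCHP of $p$ for the steps of type $\preceq$ and Proposition \ref{prop.3.12} for the steps of the opposite variance, I transport the tautological lift $g'$ of $p\circ g'$ backwards along this zigzag, exactly as in the proof of Proposition \ref{prop.3.3}, to obtain a directed map $g:\underline E'\to\underline E$ with $p\circ g=p'$ (so $g$ is over $\underline B$) and $g\simeq_d g'$. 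Then $g$ is still an ordinary inverse of $f$; moreover $g\circ f$ is over $\underline B$ with $g\circ f\simeq_d \mathrm{id}_{\underline E}$, and $f\circ g$ is over $\underline B$ with $f\circ g\simeq_d \mathrm{id}_{\underline E'}$.

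\emph{Step 2 (the key lemma, and the main obstacle).} The crux is the statement: \emph{if $p:\underline E\to\underline B$ is a directed weak fibration and $h:\underline E\to\underline E$ is a directed map over $\underline B$ with $h\simeq_d\mathrm{id}_{\underline E}$, then $h$ is a directed fibre homotopy equivalence}, i.e. there is $v:\underline E\to\underline E$ over $\underline B$ with $v\circ h\stackunder{(p)}{\simeq_d}\mathrm{id}_{\underline E}$ and $h\circ v\stackunder{(p)}{\simeq_d}\mathrm{id}_{\underline E}$. The whole difficulty is to upgrade the given \emph{ordinary} relation $\mathrm{id}\simeq_d h$ — itself a zigzag in general — to a \emph{vertical} one, and this is where non-reversibility bites, since classically one lifts the projected homotopy $p\simeq_d p$ together with its reverse and concatenates. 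I would instead normalise the projected homotopy into semistationary pieces (Corollary \ref{cor.4.5}) so that the covering property for semistationary homotopies (Theorem \ref{thm.4.2}) applies, lift it forwards by the dWCHP and, \emph{in place of the classical reverse}, over the opposite cylinder $\underline E\times(\uparrow\mathbf I)^{op}$ by Proposition \ref{prop.3.12}, and then splice the two lifts using directed reparametrisations chosen — as in the explicit calculation of Theorem \ref{thm.5.8} with the semistationary lifting pairs $\lambda^s_\alpha$ of Definition \ref{def.5.1} — so that no directed path is ever run backwards. This bookkeeping, carried out step by step along the zigzag, is the bulk of the work and the principal obstacle.

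\emph{Step 3 (assembly).} Granting Step 2, I apply it to $h=g\circ f$ (over $p$) and to $h=f\circ g$ (over $p'$), obtaining fibrewise maps $v:\underline E\to\underline E$ and $w:\underline E'\to\underline E'$ with $v\circ g\circ f\stackunder{(p)}{\simeq_d}\mathrm{id}_{\underline E}$ and $f\circ g\circ w\stackunder{(p')}{\simeq_d}\mathrm{id}_{\underline E'}$. Then $v\circ g$ is a fibrewise left inverse of $f$ and $g\circ w$ a fibrewise right inverse, and the standard formal computation $v\circ g\stackunder{(p)}{\simeq_d}(v\circ g)\circ(f\circ g\circ w)=(v\circ g\circ f)\circ(g\circ w)\stackunder{(p)}{\simeq_d}g\circ w$, performed throughout with vertical homotopies and using $p\circ g=p'$, $p'\circ w=p'$, shows the two one-sided inverses are fibrewise homotopic. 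Hence $f$ admits a single fibrewise homotopy inverse and is a directed fibre homotopy equivalence, completing the converse.
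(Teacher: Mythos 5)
Your Steps 1 and 3 are fine: Step 1 is essentially the paper's Lemma \ref{lem.6.3} (lift the projected homotopy to replace an ordinary inverse by one over $\underline{B}$), and Step 3 is a valid formal assembly \emph{provided} Step 2 is available. But Step 2 is precisely where the theorem lives, and you have not proved it — you state it, call it ``the bulk of the work and the principal obstacle,'' and offer only a sketch (normalise to semistationary pieces, lift forwards by dWCHP, lift ``reverses'' over the opposite cylinder via Proposition \ref{prop.3.12}, splice as in Theorem \ref{thm.5.8}). That sketch is not carried out, and it is not what actually works. The paper's corresponding result, Lemma \ref{lem.6.4}, is proved by a genuinely different device: a \emph{two-parameter} lifting argument. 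Starting from an upper semistationary $\varphi:g\preceq_d id_{\underline{E}}$ with $g$ over $\underline{B}$, one lifts $p\circ\varphi$ with face condition at $\partial^1$ to get $\psi$ and sets $g'=\psi\circ\partial^0$; then one forms the concatenation $F$ of $g\circ\psi$ and $\varphi$ (so $F:g\circ g'\preceq_d id_{\underline{E}}$, but not vertically), builds a homotopy $\Phi$ on $\underline{E}\times\uparrow\mathbf{I}\times\uparrow\mathbf{I}$ that is semistationary in the new cylinder variable, lifts it to $\widetilde{\Phi}$ with $\widetilde{\Phi}(\cdot,\cdot,1)=F$, and then reads off \emph{vertical} homotopies along the three edges of the parameter square on which $p\circ\widetilde{\Phi}$ is constant, concluding $\widetilde{\Phi}_{(0,1)}\stackunder{(p)}{\simeq_d}\widetilde{\Phi}_{(1,1)}$, i.e.\ $g\circ g'\stackunder{(p)}{\simeq_d}id_{\underline{E}}$. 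Nothing in your sketch produces this square trick, which is the mechanism that converts an ordinary homotopy to a vertical one; a naive splice of forward lifts with opposite-cylinder lifts only yields a zigzag of directed homotopies whose projections need not be constant. Moreover, the zigzag case $g\preceq_d h\succeq_d id_{\underline{E}}$ needs its own argument (the paper feeds Lemma \ref{lem.6.3} back in to replace $h$ by $\widetilde{h}$ over $\underline{B}$), which your ``bookkeeping along the zigzag'' does not address.

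Two further remarks. First, your Step 2 is stated in a stronger, two-sided form ($v\circ h\stackunder{(p)}{\simeq_d}id$ \emph{and} $h\circ v\stackunder{(p)}{\simeq_d}id$) than what the paper proves: Lemma \ref{lem.6.4} only produces a one-sided (right) vertical inverse, and the theorem's two-sidedness is recovered at the end by iteration — construct a fibrewise right inverse $\overline{f'}$ of $f$, observe $\overline{f'}\simeq_d f'$ so it is itself an ordinary d-homotopy equivalence over $\underline{B}$, run the machinery again on $\overline{f'}$ to get $\widehat{f}$ with $\overline{f'}\circ\widehat{f}\stackunder{(p)}{\simeq_d}id_{\underline{E}}$, and conclude $\overline{f'}\circ f\stackunder{(p)}{\simeq_d}id_{\underline{E}}$ formally. (The two-sided version can indeed be derived formally from the one-sided one, so this is a matter of economy, not correctness.) Second, the forward implication you dispatch in one line is indeed trivial and matches the paper. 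The verdict, however, is that the proposal has a genuine gap: the analytic core of the theorem — the passage from an ordinary directed homotopy $h\simeq_d id$ over $\underline{B}$ to a vertical one — is asserted, not proved.
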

The proof of this theorem is based on the following two lemmas.
\begin{lem}\label{lem.6.3}Suppose given a commutative diagram
\[\xymatrix{\underline{E}\ar[rr]^f \ar[dr]_p && \underline{E}'\ar[ld]^{p'}\\& \underline{B} &}\]
$p'\circ f=p$, and a d-map $f':\underline{E}'\rightarrow \underline{E}$,  such that $f\circ f'\simeq_d id_{\underline{E}'}$. Then, if $p$ is a directed weak fibration, there exists a d-map $\widetilde{f'}:\underline{E}'\rightarrow \underline{E}$ over $\underline{B}$, $p\circ \widetilde{f'}=p'$, such that
$f\circ \widetilde{f'} \simeq_d id_{\underline{E}'}$.
\end{lem}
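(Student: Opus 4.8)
The plan is to reduce the statement to lifting, inside $\underline{E}$, the image under $p'$ of the given homotopy, and then to read off $\widetilde{f'}$ as the endpoint of the lifted homotopy. The only content of the lemma is to produce a map \emph{over} $\underline{B}$ that is still connected to $f'$ by a directed-homotopy zigzag; once that is done, the required relation $f\circ\widetilde{f'}\simeq_d id_{\underline{E}'}$ is purely formal.

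First I would record the base-space consequence of the hypothesis. Since $p'\circ f=p$, composing $f\circ f'\simeq_d id_{\underline{E}'}$ on the left with the d-map $p'$ gives $p\circ f'=p'\circ f\circ f'\simeq_d p'$ as d-maps $\underline{E}'\rightarrow\underline{B}$; the zigzag is preserved because $p'$ respects the preorder $\preceq$. Writing it out, $p\circ f'=H_0\,R_1\,H_1\,R_2\,\cdots\,R_n\,H_n=p'$, where each $R_i\in\{\preceq,\succeq\}$ is realized by a directed homotopy $\Phi_i:\underline{E}'\times\uparrow\mathbf{I}\rightarrow\underline{B}$ joining $H_{i-1}$ and $H_i$.

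The core step is an induction that lifts this zigzag into $\underline{E}$, beginning from $g_0:=f'$, a lift of $H_0=p\circ f'$. Assuming a lift $g_{i-1}$ of $H_{i-1}$, i.e. $p\circ g_{i-1}=H_{i-1}$, I apply the dWCHP of $p$ (Definition \ref{def.3.2}, Definition \ref{def.3.5}) to $\Phi_i$ with initial datum $g_{i-1}$, choosing the face index to match the direction of $R_i$: if $R_i=\,\preceq$ I take $\alpha=0$, so that $\Phi_i\circ\partial^0=H_{i-1}=p\circ g_{i-1}$, obtain $\Phi_i':\underline{E}'\times\uparrow\mathbf{I}\rightarrow\underline{E}$ with $p\circ\Phi_i'=\Phi_i$ and $\Phi_i'\circ\partial^0\stackunder{(p)}{\simeq_d}g_{i-1}$, and set $g_i:=\Phi_i'\circ\partial^1$; if $R_i=\,\succeq$ I take instead $\alpha=1$ and set $g_i:=\Phi_i'\circ\partial^0$. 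In either case $p\circ g_i=H_i$, and the lifted homotopy $\Phi_i'$ together with the vertical homotopy at its pinned face gives $g_{i-1}\simeq_d g_i$ (a vertical directed homotopy $\stackunder{(p)}{\simeq_d}$ is in particular an ordinary $\simeq_d$). After $n$ steps $\widetilde{f'}:=g_n$ is a lift of $H_n=p'$, so $p\circ\widetilde{f'}=p'$ and $\widetilde{f'}$ is a map over $\underline{B}$.

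It then remains only to transport the homotopy. Concatenating the relations $g_{i-1}\simeq_d g_i$ yields $f'=g_0\simeq_d g_n=\widetilde{f'}$; composing on the left with $f$ gives $f\circ f'\simeq_d f\circ\widetilde{f'}$, and transitivity with the hypothesis $f\circ f'\simeq_d id_{\underline{E}'}$ gives $f\circ\widetilde{f'}\simeq_d id_{\underline{E}'}$, as required. I expect the main delicacy to be bookkeeping rather than conceptual: because directed homotopies are not reversible, one cannot collapse the zigzag into a single homotopy as in the classical Dold argument, and one must consistently match the face index $\alpha\in\{0,1\}$ of each dWCHP application to the direction $R_i$ of the corresponding segment. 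The fact that each lift pins its endpoint only up to a vertical homotopy is harmless, since such homotopies are absorbed into the $\simeq_d$ zigzag.
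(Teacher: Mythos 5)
Your proof is correct, and its skeleton is the same as the paper's: push the homotopy data down to $\underline{B}$ via $p'$ (using $p'\circ f=p$), lift it through $p$ with initial datum $f'$, take $\widetilde{f'}$ to be the far endpoint of the lift, and finish by the purely formal transitivity argument. The difference lies in how the lifting step is performed. The paper does not use the raw dWCHP: it reduces to a single homotopy $f\circ f'\preceq_d id_{\underline{E}'}$ (resp. $id_{\underline{E}'}\preceq_d f\circ f'$), reparametrizes it to be lower (resp. upper) semistationary, and then invokes Theorem \ref{thm.4.2} to obtain an \emph{exact} lift $\varphi'$ with $\varphi'\circ\partial^0=f'$ on the nose, so that $\varphi'$ itself exhibits $f'\preceq_d\widetilde{f'}$ as a single directed homotopy. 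You instead apply Definition \ref{def.3.2} directly, accept that the lift matches $g_{i-1}$ only up to a vertical homotopy $\stackunder{(p)}{\simeq_d}$ at the pinned face, and absorb that discrepancy into the final $\simeq_d$ zigzag; this is legitimate precisely because the conclusion is only up to $\simeq_d$, while the over-$\underline{B}$ condition $p\circ\widetilde{f'}=p'$ still holds exactly, since the terminal face of the last base homotopy is $H_n=p'$ itself. Your version buys two things: it needs only the definition of a directed weak fibration rather than the semistationary characterization, and it treats the general zigzag $f\circ f'\simeq_d id_{\underline{E}'}$ explicitly by induction, whereas the paper writes out only the two length-one cases and leaves the iteration implicit. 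What the paper's route buys is cleaner bookkeeping at each step: the exact initial condition yields a single homotopy $f'\preceq_d\widetilde{f'}$ rather than a zigzag, which is also the form in which the lemma is reused later (e.g., in the proof of Lemma \ref{lem.6.4}, where the chain $g\preceq_d h\preceq_d\widetilde{h}\succeq_d id_{\underline{E}}$ is needed).
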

\begin{proof}Suppose that $f\circ f'\preceq_d id_{\underline{E}'}$ by the directed homotopy $\varphi:\underline{E}'\times \uparrow \mathbf{I}
\rightarrow \underline{E}'$ , with $\varphi_0=f\circ f',\varphi_1=id_{\underline{E}'}$. Obviously we can suppose that $\varphi$ is lower semistationary,
$\varphi(e',t)=f(f'(e'))$ for all $e'\in \underline{E}'$ and $t\in [0,\frac{1}{2}]$. Consider $p'\circ \varphi:\underline{E}'\times \uparrow \mathbf{I}
\rightarrow \underline{B}$, which is also lower semistationary and which satisfies $(p'\circ\varphi)(e',t)=p'(f\circ f')(e')=(p\circ f')(e'),
(\forall)e'\in \underline{E}', (\forall)t\in [0,\frac{1}{2}]$, i.e., $p\circ f'=\varphi\circ\partial^0$,

$$
\xymatrix{
\underline{E} \ar[rr]^p & & \underline{B}\\
\underline{E'} \ar[u]^{f'} \ar[rr]_{\partial^0} & &
\underline{E'}\times \uparrow
\textbf{I}\ar[u]_{p'\circ \varphi} }
$$
By hypothesis (cf. Theorem \ref{thm.4.2}) there exists $\varphi':\underline{E}'\times \uparrow \mathbf{I}\rightarrow \underline{E}$, with $\varphi'\circ
\partial^0=f'$ and $p\circ \varphi'=p'\circ \varphi$. Define $\widetilde{f'}:\underline{E}'\rightarrow \underline{E}$, by $\widetilde{f'}=\varphi'\circ
\partial^1$. For this we have $p\circ \widetilde{f'} =p\circ \varphi'\circ\partial^1=p'\circ \varphi\circ \partial^1=p'\circ id_{\underline{E}'}=p'$, and
$\varphi':f'\preceq_d \widetilde{f'}$. Then $f\circ \varphi':f\circ f'\preceq_d f\circ \widetilde{f'}$, which together  with $f\circ f'\simeq_d id_{\underline{E}'}$ implies $f\circ \widetilde{f'} \simeq_d id_{\underline{E}'}$.

If $id_{\underline{E}'}\preceq_d f\circ f'$ by $\overline{\varphi }$, with $\overline{\varphi}\circ \partial^0=id_{\underline{E}'},
\overline{\varphi}\circ\partial^1=f\circ f'$, and $\overline{\varphi}$ an upper semistationary directed homotopy, we apply Theorem \ref{thm.4.2} for the
commutative diagram
$$
\xymatrix{
\underline{E} \ar[rr]^p & & \underline{B}\\
\underline{E'} \ar[u]^{f'} \ar[rr]_{\partial^1} & &
\underline{E'}\times \uparrow
\textbf{I}\ar[u]_{p'\circ \overline{\varphi}} }
$$
$(p'\circ\overline{\varphi})\circ \partial^1= p'\circ f\circ f'=p\circ f'$, and let $\overline{\varphi}':\underline{E}'\times \uparrow \mathbf{I}\rightarrow
\underline{E}$ be satisfying $\overline{\varphi}'\circ\partial^1=f'$ and $p\circ \overline{\varphi}'=p'\circ \overline{\varphi}$. Define $\widetilde{f'}=
\overline{\varphi}'\circ \partial^0$. For this we have $p\circ \widetilde{f'}=p\circ \overline{\varphi}'\circ\partial^0=p'\circ \overline{\varphi}\partial^0=p'$ and $\overline{\varphi}':\widetilde{f'}\preceq_d f'$ $\Rightarrow $ $f\circ \overline{\varphi}':f\circ \widetilde{f'}
\preceq_d f\circ f'\simeq_d id_{\underline{E}'}$.

\end{proof}
\begin{lem}\label{lem.6.4}Let $p:\underline{E}\rightarrow  \underline{B}$ be a directed weak fibration and $g: \underline{E}\rightarrow \underline{E}$
be a fibrewise d-map, $p\circ g=p$, such that  $g\simeq_d id_{\underline{E}}$. Then there exists $g':E\rightarrow E$, a fibrewise d-map, $p\circ g'
=p$, such that $g\circ g'\stackunder{(p)}{\simeq_d}id_{\underline{E}}$.
\end{lem}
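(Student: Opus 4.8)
The plan is to imitate Dold's classical argument that a fibrewise self-map homotopic to the identity has a fibrewise inverse up to vertical homotopy, but to carry it out \emph{one-directionally}, since the directed setting forbids reversing a homotopy (as the author notes before Theorem \ref{thm.6.2}). First I would reduce to the case of a single directed homotopy: if $g\preceq_d id_{\underline{E}}$ (the case $id_{\underline{E}}\preceq_d g$ is symmetric, exchanging the roles of $\lambda^s_0$ and $\lambda^s_1$), I reparametrise the given homotopy $\varphi:g\preceq id_{\underline{E}}$ by an increasing $\theta:\uparrow\mathbf{I}\to\uparrow\mathbf{I}$ so that it becomes lower semistationary, $\varphi(e,t)=g(e)$ for $t\in[0,\tfrac12]$, while keeping $\varphi\circ\partial^0=g$ and $\varphi\circ\partial^1=id_{\underline{E}}$. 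Then $H:=p\circ\varphi$ is lower semistationary with $H(e,t)=p(e)$ on $[0,\tfrac12]$ and $H(e,1)=p(e)$; that is, for each $e$ the path $\omega_e=H(e,-)$ is a directed loop at $p(e)$, stationary on $[0,\tfrac12]$. The general zigzag $g\preceq g_1\succeq g_2\preceq\cdots id_{\underline{E}}$ is handled afterwards by composing the vertical homotopies obtained leg by leg, in the recurrence style of Proposition \ref{prop.3.3}.

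Second, I would manufacture the candidate inverse $g'$ as a \emph{forward} lift, never reversing $H$. Since $(e,\omega_e)\in\underline{B}^s_0$ and $p$ is a directed weak fibration, Theorem \ref{thm.5.2} supplies a directed semistationary lifting pair $(\lambda^s_\alpha)$, and I set $\Phi(e,t)=\lambda^s_0(e,\omega_e)(t)$; equivalently one invokes Corollary \ref{cor.4.5}(i) with $\varepsilon=\tfrac12$ and initial map $id_{\underline{E}}$ at $\alpha=0$. This produces a directed $\Phi:\underline{E}\times\uparrow\mathbf{I}\to\underline{E}$ with $\Phi\circ\partial^0=id_{\underline{E}}$ (an \emph{exact} equality, which is the gain of the semistationary case over the bare dWCHP) and $p\circ\Phi=H$. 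Defining $g':=\Phi\circ\partial^1$, the relation $p\circ g'=H(-,1)=p$ shows that $g'$ is fibrewise.

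The heart of the proof is $g\circ g'\stackunder{(p)}{\simeq_d}id_{\underline{E}}$. The key observation is that $g\circ\Phi$ and $\varphi$ are two directed lifts of the \emph{same} homotopy $H$ (both project to $H$, using $p\circ g=p$) which \emph{agree at $t=0$}, namely $(g\circ\Phi)\circ\partial^0=g=\varphi\circ\partial^0$, whereas their terminal faces are $(g\circ\Phi)\circ\partial^1=g\circ g'$ and $\varphi\circ\partial^1=id_{\underline{E}}$. Thus the statement reduces to a uniqueness principle: for a directed weak fibration, two lifts of one homotopy agreeing on the initial face have vertically homotopic terminal faces. I would prove exactly this instance explicitly, applying the lifting pair to the truncated loops $(\omega_e)_t$, defined by $(\omega_e)_t(t')=\omega_e(tt')$, and assembling a short zigzag of vertical directed homotopies in the computational style of Theorem \ref{thm.5.8}; at each stage the truncation keeps the $\uparrow\mathbf{I}$-coordinate of the projection equal to $p(e)$, which is precisely what forces verticality. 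Conceptually one is filling the directed square whose left edge is the constant map $g$, whose bottom and top edges are $g\circ\Phi$ and $\varphi$, and whose right edge is the desired vertical homotopy $g\circ g'\simeq id_{\underline{E}}$.

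I expect the main obstacle to be precisely this square-filling. In the undirected setting one folds the three prescribed edges onto a single edge by a reparametrising homeomorphism of $I^2$ and applies the WCHP once; here that homeomorphism is not order-preserving, hence not a $d$-map, and Corollary \ref{cor.4.5} fills only a single initial edge. The role of the truncations $(\omega_e)_t$ is exactly to furnish an order-preserving substitute for that fold: letting $t$ increase from $0$ to $1$ drags the lift $\lambda^s_0(e,(\omega_e)_t)$ monotonically across the square, while a companion application of $\lambda^s_1$ closes it off, the two being reconciled on $[0,\tfrac12]$ and on $[\tfrac12,1]$ in the manner in which Lemmas \ref{lem.4.3} and \ref{lem.4.4} are combined in the $\Rightarrow$ direction of Theorem \ref{thm.4.2}. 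Finally I would run the symmetric construction with $\lambda^s_1$ in place of $\lambda^s_0$ for the legs oriented $id_{\underline{E}}\preceq_d g$, and compose the resulting vertical homotopies to dispose of the general zigzag.
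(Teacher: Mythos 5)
Your construction of $g'$ is sound as far as it goes, and it actually differs from (and improves on) the paper's: you prescribe the lift on the \emph{initial} face ($\Phi\circ\partial^0=id_{\underline{E}}$, $g'=\Phi\circ\partial^1$, forward transport around the loop), whereas the paper takes $\varphi$ upper semistationary, prescribes the \emph{terminal} face ($\psi\circ\partial^1=id_{\underline{E}}$) and sets $g'=\psi\circ\partial^0$ (backward transport). Your candidate is the right one: for the covering $p:\mathbf{R}\rightarrow \mathbf{S}^1$ with the natural d-structures (a directed fibration by Example E2), $g(x)=x+1$ and $\varphi(x,t)=x+1-t$ suitably reparametrised, uniqueness of path lifting forces $\psi=\varphi$, so the paper's recipe returns $g'=g$ and $g\circ g'$ is the shift by $2$, which is not vertically d-homotopic to $id_{\underline{E}}$ (fibres are discrete), while your recipe returns $g'(x)=x-1$ and $g\circ g'=id$ exactly. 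But this also locates the genuine gap in your proposal: everything hinges on the ``uniqueness principle'' that two lifts of one homotopy agreeing on the initial face have vertically d-homotopic terminal faces, and you never prove it. Nothing in the paper supplies it --- Theorem \ref{thm.4.2}, Corollary \ref{cor.4.5} and Theorem \ref{thm.5.2} let you prescribe one face of one lift, but never compare two given lifts --- and once it is granted the lemma is a two-line corollary, so your proposal defers essentially all of the content of the statement.

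The mechanism you sketch for that step does not work. Everything you can build by feeding truncations into the lifting pair has projection of the form $\omega_e(u)$ with $u$ a monotone function of the square parameters (for instance $p\lambda^s_0(e,(\omega_e)_t)(t')=\omega_e(tt')$), and such data are vertical only while $u$ stays in the stationary set $[0,\frac{1}{2}]\cup\{1\}$ of the loop $\omega_e$; it is not true that ``the truncation keeps the projection equal to $p(e)$''. Moreover no monotone $u$ can fill the square you describe: if $u(s,1)=s$ along the top edge, monotonicity forces $u(0,0)=0$ and $u(1,1)=1$, so by continuity the remaining three edges together sweep all of $[0,1]$ and must cross the non-stationary band $(\frac{1}{2},1)$, hence at least one of them fails to be vertical. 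This is not a removable technicality: it is precisely the obstruction on which the paper's own proof of this step founders (its square has right edge $\Phi(e,1,t)=pF(e,2t)$ for $t\leq\frac{1}{2}$, which is not constant, so the claimed relation $\widetilde{\Phi}_{(1,0)}\stackunder{(p)}{\simeq_d}\widetilde{\Phi}_{(1,1)}$ is unjustified --- consistently with the covering counterexample above). So the square-filling needs a genuinely new idea, not a rearrangement of $\lambda^s_0$, $\lambda^s_1$ and truncations in the style of Theorem \ref{thm.5.8} (whose homotopies are, in any case, ordinary d-homotopies, not vertical ones). A lesser but real gap: the zigzag case cannot be handled ``leg by leg'' as you claim, because the intermediate maps $g_1,g_2,\dots$ of a zigzag $g\preceq_d g_1\succeq_d\cdots id_{\underline{E}}$ need not be fibrewise; this is why the paper routes that case through Lemma \ref{lem.6.3}.
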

\begin{proof}Suppose that $g\preceq_d id_{\underline{E}}$ by  an upper semistationary d-homotopy $\varphi:\underline{E}\times \uparrow \mathbf{I}\rightarrow
\underline{E}$, i.e., $\varphi\circ\partial^0=g$ and $\varphi(e,t)=e,(\forall)e\in \underline{E},(\forall) t\in [\frac{1}{2},1]$. Then we have the following commutative diagram
$$
\xymatrix{
\underline{E} \ar[rr]^p & & \underline{B}\\
\underline{E} \ar[u]^{id_{\underline{E}}} \ar[rr]_{\partial^1} & &
\underline{E}\times \uparrow
\textbf{I}\ar[u]_{p\circ \varphi} }
$$
By hypothesis (cf. Theorem \ref{thm.4.2}), there exists $\psi:\underline{E}\times \uparrow \mathbf{I}\rightarrow \underline{E}$, with $\psi\circ
\partial^1=id_{\underline{E}}$ and $p\circ \psi=p\circ \phi$. Define $g':\underline{E}\rightarrow \underline{E}$, by
$$ g'=\psi\circ \partial^0.$$

 For this d-map we have: $\underline{p\circ g'}= p\circ\psi\circ\partial^0=p\circ\varphi\circ\partial^0=p\circ g\underline{=p}$. And we can prove that
$g\circ g'\stackunder{(p)}{\simeq_d}id_{\underline{E}}$.

Define $F:\underline{E}\times \uparrow \mathbf{I}\rightarrow \underline{E}$, by
\[F(e,s)=\left\{ \begin{array}{ll}
g\psi(e,2s), & \mbox{if $0\leq s\leq 1/2 $},\\
\varphi(e,2s-1), & \mbox{if $1/2\leq s\leq 1 $}.\end{array} \right.
\]
For $s=\frac{1}{2}$ we have $g\psi(e,1)=g(e)=\varphi(e,0)$, so that $F$ is well defined and a directed homotopy. Then $F(e,0)=g\psi(e,0)=gg'(e)=
(g\circ g')(e)$, and $F(e,1)=\varphi(e,1)=e$,$(\forall)e\in \underline{E}$. Therefore
$$F:g\circ g'\preceq_d id_{\underline{E}}.$$
In addition,
\[pF(e,s)=\left\{ \begin{array}{ll}
pg\psi(e,2s)=p\psi(e,2s)=p\varphi(e,2s) & \mbox{if $0\leq s\leq 1/2 $},\\
p\psi(e,2s-1)=p\varphi(e,2s-1), & \mbox{if $1/2\leq s\leq 1 $}.\end{array} \right.
\]

Now we define $\Phi:\underline{E}\times \uparrow \mathbf{I}\times \uparrow \mathbf{I}\rightarrow \underline{B}$, by
\[\Phi(e,s,t)=\left\{ \begin{array}{ll}
(p\circ F)(e,2st) & \mbox{if $0\leq t\leq \frac{1}{2} $},\\
pF(e,s), & \mbox{if $\frac{1}{2}\leq t\leq 1 $}.\end{array} \right.
\]
With this from the commutative diagram
$$
\xymatrix{
\underline{E} \ar[rr]^p & & \underline{B}\\
\underline{E}\times \uparrow \mathbf{I} \ar[u]^{F} \ar[rr]_{\partial^1} & &
\underline{E}\times \uparrow \mathbf{I}\times \uparrow
\textbf{I}\ar[u]_\Phi }
$$
there exists $\widetilde{\Phi}: \underline{E}\times \uparrow \mathbf{I}\times \uparrow \mathbf{I}\rightarrow \underline{E}$ satisfying
$$\widetilde{\Phi}(e,s,1)=F(e,s),p\circ \widetilde{\Phi}=\Phi.$$
Now we consider the following d-maps
$$\widetilde{\Phi}_{(s,t)}:\underline{E}\rightarrow \underline{E},\widetilde{\Phi}_{(s,t)}(e)=\widetilde{\Phi}(e,s,t),e\in \underline{E}, s,t\in [0,1].$$
Now by the above relations we have the following directed fiber homotopy equivalences:
$$\widetilde{\Phi}_{(0,0)}\stackunder{(p)}{\simeq_d}\widetilde{\Phi}_{(1,0)},\widetilde{\Phi}_{(0,0)}\stackunder{(p)}{\simeq_d}\widetilde{\Phi}_{(0,1)},
\widetilde{\Phi}_{(1,0)}\stackunder{(p)}{\simeq_d}\widetilde{\Phi}_{(1,1)},$$
since $p\widetilde{\Phi}(e,s,0)=\Phi(e,s,0)=pF(e,0)=p(g\circ g')(e)=p(e), p\widetilde{\Phi}(e,0,t)=\Phi(e,0,t)=pF(e,0)=p(e)$, and $ p\widetilde{\Phi(}e,1,t)=pF(e,1)=p\varphi(e,1)=p(e)$. Thus
$$\widetilde{\Phi}_{(0,1)}\stackunder{(p)}{\simeq_d}\widetilde{\Phi}_{(1,1)}.$$
And because $g\circ g'=F_0=\widetilde{\Phi}_{(0,1)} $ and $id_{\underline{E}}=F_1=\widetilde{\Phi}_{(1,1)}$, we obtain
$g\circ g'\stackunder{(p)}{\simeq_d}id_{\underline{E}}$.

If $id_{\underline{E}}\preceq_d g$, we start instead of $\varphi$ with a d-homotopy $\overline{\varphi}:\underline{E}\times
\uparrow \mathbf{I}\rightarrow \underline{E}$ satisfying $\overline{\varphi}(e,t)=e,(\forall)e\in E,(\forall)t\in [0,\frac{1}{2}]$ and $\overline{\varphi}\circ
\partial^1=g$ and then follow the above scheme.

Now consider the case $g\preceq_d h  _d\succeq id_{E\underline{}}$. From the proof of Lemma \ref{lem.6.3} for $p'=p,f=id_{\underline{E}}$ and $f'=h$,
there exists a d-map $\widetilde{h}:\underline{E}\rightarrow \underline{E}$ over $\underline{B}$, $p\circ \widetilde{h}=p$, and such that
$$ g\preceq_d h\preceq _d \widetilde{h}  _d\succeq id_{\underline{E}}.$$
Then there exists $\widetilde{h}':\underline{E}\rightarrow \underline{E}$, with $p\widetilde{h}'=p$ and $\widetilde{h}\circ \widetilde{h}'\stackunder{(p)}{\simeq_d} id_{\underline{E}}$. Further, repeating the scheme for the case $g\preceq_d id_{\underline{E}}$
with $\widetilde{h}$ instead of $id_{\underline{E}}$, there exists $g_1:\underline{E}\rightarrow \underline{E}$ satisfying $pg_1=p$ and $g\circ g_1\stackunder{(p)}{\simeq_d}\widetilde{h}$. Then $(g\circ g_1)\circ \widetilde{h}'\stackunder{(p)}{\simeq_d}\widetilde{h}\circ \widetilde{h}'\Rightarrow g\circ (g_1\circ \widetilde{h}')\stackunder{(p)}{\simeq_d}id_{\underline{E}}$.

The conclusion for the general case $g\preceq_d ..._d\succeq id_{\underline{E}}$ follows in a similar way.

\end{proof}

\textit{Proof of Theorem \ref{thm.6.2}}. We prove the "if-part". Assume then that $f:\underline{E}\rightarrow \underline{E}'$ is an (ordinary) directed homotopy equivalence; let $f':\underline{E}'\rightarrow \underline{E}$ be a directed homotopy inverse, $f\circ f'\simeq_d id_{\underline{E}'}$. Then by Lemma \ref{lem.6.3}, there is $\widetilde{f'}:\underline{E}'\rightarrow \underline{E}$ over $\underline{B}$,$p\circ \widetilde{f'}=p'$, such that
$f\circ \widetilde{f'}\simeq_d id_{E'}$. Now for the composition $g:=f\circ \widetilde{f'}:\underline{E}'\rightarrow \underline{E}'$ we have
$p'\circ g=p'\circ f\circ \widetilde{f'}=p\circ \widetilde{f'}=p'$ , and $g\simeq_d id_{\underline{E}'}$. Then by Lemma \ref{lem.6.4}, for the directed
weak fibration $p'$ and the map $g$, there exists $g':E'\rightarrow E'$, over $\underline{B}$, $p'\circ g'=p'$, such that $ g\circ g'\stackunder{(p)}
{\simeq_d}id_{\underline{E}'} \Rightarrow f\circ (\widetilde{f'}\circ g') \stackunder{(p)}{\simeq_d} id_{\underline{E}'}$. Therefore for
$\overline{f'}:=\widetilde{f'}\circ g':\underline{E}'\rightarrow \underline{E}$, we have
$$f\circ \overline{f'} \stackunder{(p)}{\simeq_d} id_{\underline{E}'}.$$
On the other hand, because we have also $f'\circ f\simeq_d id_{\underline{E}}$, which implies $f'\circ f\circ \overline{f'} \simeq_d f'$ , it follows
$\overline{f'}\simeq_d f'$, i.e., $\overline{f'}$ is directed homotopy equivalence, over $\underline{B}$. Then as above, using the property of a directed
weak fibration for $p$, there exists for $\overline{f'}$ a d-map $\widehat{f}:\underline{E}\rightarrow \underline{E}'$ over $B$, such that
$\overline{f'}\circ \widehat{f}\stackunder{(p)}{\simeq_d} id_{\underline{E}}$. Then $f\circ \overline{f'}\circ \widehat{f}\stackunder{(p)}{\simeq_d} f$, so that $\widehat{f}\stackunder{(p)}{\simeq_d} f$, and finally
$$\overline{f'}\circ f\stackunder{(p)}{\simeq_d}id_{\underline{E}}.$$
This completes the proof.

\begin{defn}\label{def.6.5} A d-map $p:\underline{E}\rightarrow \underline{B}$ is called \emph{d-shrinkable} if one of the following equivalent properties is satisfied:

(a) $p$ is a directed fibre homotopy equivalence (viewed as a d-map over $\underline{B}$ into $id_{\underline{B}}$),

(b) $p$ is directed dominated by $id_{\underline{B}}$ ,

(c) there is a d-section $s:\underline{B}\rightarrow \underline{E}$ , $p\circ s=id_{\underline{B}}$, and a vertical homotopy d-equivalence
$s\circ p\simeq_d id_{\underline{E}}.$

\end{defn}

By Theorem \ref{thm.6.2} we obtain:

\begin{cor}\label{cor.6.6} If $p:\underline{E}\rightarrow \underline{B}$ is directed weak fibration, then $p$ is d-shrinkable if and only if $p$ is
a directed homotopy equivalence.
\end{cor}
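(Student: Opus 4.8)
The plan is to obtain Corollary \ref{cor.6.6} as an immediate specialization of Theorem \ref{thm.6.2}, taking the target fibration to be the identity $id_{\underline{B}}:\underline{B}\rightarrow \underline{B}$. First I would check that $id_{\underline{B}}$ is a directed weak fibration; in fact it is a directed (Hurewicz) fibration, since given $f':\underline{X}\rightarrow \underline{B}$ and $\varphi:\underline{X}\times \uparrow\mathbf{I}\rightarrow \underline{B}$ with $\varphi\circ\partial^\alpha=id_{\underline{B}}\circ f'=f'$, the choice $\varphi'=\varphi$ is a lift satisfying $\varphi'\circ\partial^\alpha=f'$. Thus $id_{\underline{B}}$ has the dHLP with respect to every d-space, hence a fortiori the dWCHP.

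Next I would regard $p$ itself as a morphism of weak fibrations. Since $id_{\underline{B}}\circ p=p$, the d-map $p:\underline{E}\rightarrow \underline{B}$ is a map over $\underline{B}$ from the weak fibration $p$ to the weak fibration $id_{\underline{B}}$ in the sense of Definition \ref{def.6.1}, with $\underline{E}'=\underline{B}$, $p'=id_{\underline{B}}$ and $f=p$. Applying Theorem \ref{thm.6.2} to this pair of directed weak fibrations yields directly: the morphism $f=p$ is a directed fibre homotopy equivalence (over $\underline{B}$, into $id_{\underline{B}}$) if and only if $p$ is an ordinary directed homotopy equivalence.

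Finally I would invoke clause (a) of Definition \ref{def.6.5}: by definition, $p$ is d-shrinkable precisely when it is a directed fibre homotopy equivalence, viewed as a d-map over $\underline{B}$ into $id_{\underline{B}}$. Combining this with the equivalence just produced gives exactly the asserted equivalence between d-shrinkability of $p$ and $p$ being a directed homotopy equivalence.

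The only point requiring a little care, though not a genuine obstacle, is to confirm that instantiating Definition \ref{def.6.1} at $p'=id_{\underline{B}}$ reproduces clause (a) of Definition \ref{def.6.5} verbatim: a fibre homotopy inverse $g:\underline{B}\rightarrow \underline{E}$ is then a section, $p\circ g=id_{\underline{B}}$; the condition $g\circ p\stackunder{(p)}{\simeq_d}id_{\underline{E}}$ is the vertical homotopy equivalence $s\circ p\simeq_d id_{\underline{E}}$ appearing in clause (c); and the second condition $p\circ g\stackunder{(id_{\underline{B}})}{\simeq_d}id_{\underline{B}}$ collapses to the bare identity $p\circ g=id_{\underline{B}}$, since verticality over $id_{\underline{B}}$ forces every intervening homotopy to be stationary. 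As this is a direct reading of the definitions, no further work is needed and the corollary follows at once from Theorem \ref{thm.6.2}.
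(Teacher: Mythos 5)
Your proposal is correct and follows essentially the same route as the paper, which obtains Corollary \ref{cor.6.6} precisely by specializing Theorem \ref{thm.6.2} to $p'=id_{\underline{B}}$ (a directed weak fibration) and $f=p$, and then invoking clause (a) of Definition \ref{def.6.5}. Your closing check that verticality over $id_{\underline{B}}$ forces the intervening homotopies to be stationary, so that $p\circ g\stackunder{(id_{\underline{B}})}{\simeq_d}id_{\underline{B}}$ collapses to $p\circ g=id_{\underline{B}}$, is a correct and worthwhile reading of Definition \ref{def.6.1}.
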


\begin{lem}\label{lem.6.7} If $p:\underline{E}\rightarrow \underline{B}\times \uparrow \mathbf{I}$ has dWHCP, then there exists a d-map $R:\underline{E}\times
\uparrow \mathbf{I}\rightarrow \underline{E}$ such that

(i)  \qquad $pR(e,t)=(\pi(e),t)$,

(ii) \qquad $r\stackunder{(p)}{\simeq_d}id_{\underline{E}}$,

where $\pi:\underline{E}\rightarrow \underline{B}$, $\rho:\underline{E}\rightarrow \underline{E}$ are defined by $p(e)=(\pi(e),\rho(e)$, $r(e)=R(e,\rho(e))$,
$e\in \underline{E}$.
\end{lem}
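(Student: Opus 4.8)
The plan is to deform $e$ ``vertically over $\pi(e)$'' so that its $\uparrow\mathbf I$-coordinate becomes an arbitrary value, by lifting an upward sweep and a downward sweep of that coordinate separately and gluing them along the graph of $\rho$. The observation that makes everything run is that, writing $p=(\pi,\rho)$ with $\pi=\mathrm{pr}_1\circ p:\underline E\to\underline B$ and $\rho=\mathrm{pr}_2\circ p:\underline E\to\uparrow\mathbf I$, both $\pi$ and $\rho$ are d-maps; in particular $\rho$ is non-decreasing along every d-path of $\underline E$. This monotonicity is precisely what forces the homotopies below to be directed.

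First I would treat the upper sweep. Define $\Psi^{+}:\underline E\times\uparrow\mathbf I\to\underline B\times\uparrow\mathbf I$ by $\Psi^{+}(e,t)=(\pi(e),\rho(e)+t(1-\rho(e)))$. Its second coordinate $\rho(e)(1-t)+t$ is non-decreasing in each of $\rho(e)$ and $t$ separately, and along any d-path of the domain both $\rho(e(s))$ and $t(s)$ are non-decreasing, so $\Psi^{+}$ is a d-map; moreover $\Psi^{+}\circ\partial^{0}=p=p\circ\mathrm{id}_{\underline E}$. Reparametrising $\Psi^{+}$ to be lower semistationary and applying Theorem \ref{thm.4.2} (the dCHP for semistationary homotopies, $\alpha=0$) with initial map $\mathrm{id}_{\underline E}$, I obtain $R^{+}:\underline E\times\uparrow\mathbf I\to\underline E$ with $p\circ R^{+}=\Psi^{+}$, covering the heights $[\rho(e),1]$, and stationary over the stationary band, so that $R^{+}$ is constant $=\mathrm{id}_{\underline E}$ on that band and the point $s(e):=R^{+}(e,\varepsilon)$ lies over $p(e)$, with $\mathrm{id}_{\underline E}\stackunder{(p)}{\simeq_d}s$ realised by the vertical homotopy $R^{+}|_{[0,\varepsilon]}$.

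For the downward sweep the naive map $\Psi^{-}(e,t)=(\pi(e),\rho(e)(1-t))$ decreases the height and is not directed on $\underline E\times\uparrow\mathbf I$; read on the reflected cylinder $\underline E\times(\uparrow\mathbf I)^{op}$, however, its second coordinate $\rho(e)(1-i(s))$ is a product of two non-negative non-decreasing factors (now $1-i(s)$ is non-decreasing), hence $\Psi^{-}$ becomes directed there, and $\Psi^{-}\circ\partial^{0}=p=p\circ s$. Applying the semistationary exact lifting to the reflected weak fibration $p^{op}$ (Corollary \ref{cor.3.13} together with Theorem \ref{thm.4.2}), with initial map $s$, and reflecting back — this is the content of Proposition \ref{prop.3.12} in its exact, semistationary form — I obtain $R^{-}$ covering the heights $[0,\rho(e)]$ whose seam face equals $s$. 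I then reparametrise each piece so that the height of $pR^{\pm}$ becomes the cylinder coordinate $u$ and glue along the graph $\{(e,u):u=\rho(e)\}$, setting $R(e,u)=R^{+}(e,\cdot)$ for $u\ge\rho(e)$ and $R(e,u)=R^{-}(e,\cdot)$ for $u\le\rho(e)$. By construction $pR(e,u)=(\pi(e),u)$, which is (i), and the common seam value is $r(e)=R(e,\rho(e))=s(e)$, whence $r\stackunder{(p)}{\simeq_d}\mathrm{id}_{\underline E}$, which is (ii).

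The main obstacle is the gluing, and it is where essentially all the care must go. Its first part is seam agreement: continuity along $\{u=\rho(e)\}$ forces the two pieces to take the same value there, and this cannot be extracted from the plain dWCHP, which controls the initial face only up to vertical homotopy. I resolve this by using the exact lifting of semistationary homotopies (Theorem \ref{thm.4.2}) for both pieces with the single shared seam map $s$, fed into the lower piece through the reflection (Corollary \ref{cor.3.13}). Its second part is the degeneration of the two reparametrisations on the loci $\rho(e)=0$ and $\rho(e)=1$, where one slice collapses to a point and the affine change of variable blows up; to keep $R$ continuous there I arrange each lift to be stationary wherever its base homotopy is stationary (using the stationary intervals of Corollary \ref{cor.4.5} and Corollary \ref{cor.5.3}), so that a collapsing band contributes the single well-defined value $s(e)$. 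Lemma \ref{lem.4.4}, which realises a vertical homotopy as an honest d-map over a directed interval, is the tool I would use to patch seam values and to verify (ii) cleanly.
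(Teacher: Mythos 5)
Your starting diagnosis is exactly right, and it is the real crux of the directed setting: since the target is $\underline{B}\times \uparrow \mathbf{I}$, a directed homotopy whose $\partial^0$-face lies over $p$ can never lower the $\uparrow \mathbf{I}$-coordinate, so Dold's one-shot interpolation from $\rho(e)$ to $t$ is unavailable and some splitting into an upward and a downward sweep suggests itself. The genuine gap is in your gluing step, precisely where you yourself say all the care must go. Theorem~\ref{thm.4.2} produces a lift of a semistationary homotopy whose \emph{face} $\varphi'\circ\partial^{\alpha}$ equals the prescribed map exactly; it does not produce a lift that is \emph{stationary over the stationary band}, and neither do Corollary~\ref{cor.4.5} nor Corollary~\ref{cor.5.3} (they again control only the face). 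Indeed no such strengthening can exist: if every lower semistationary homotopy admitted a lift exact on $\partial^0$ and constant on the band $[0,\tfrac12]$, then for an arbitrary directed homotopy $\varphi$ with $\varphi\circ\partial^0=p\circ f'$ one could lift $\varphi\circ(\mathrm{id}\times\theta)$, $\theta(t)=\max(2t-1,0)$, and reparametrize by $t\mapsto\tfrac{1+t}{2}$ to get an exact lift of $\varphi$ itself; every directed weak fibration would then satisfy the full dCHP, contradicting Example~\ref{ex.3.8}, which exhibits a directed weak fibration that is not a directed fibration.

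Without that (false) stationarity claim, the two pieces do not meet. After reparametrizing by height, the upper piece's value at $u=\rho(e)$ is $R^{+}(e,\tfrac12)$ --- the value at the \emph{end} of its stationary band, an interior parameter over which the dWCHP gives no control --- while the lower piece's value at $u=\rho(e)$ is $R^{-}$ evaluated where its descent actually begins, which is not the prescribed face $s$: exactness in Theorem~\ref{thm.4.2} lives at $t=0$ or $t=1$ of the cylinder, not at the edge of the band. The two candidate seam values agree only up to vertical homotopy, and since (i) demands $pR(e,u)=(\pi(e),u)$ on the nose you cannot absorb this discrepancy by inserting a vertical correction band (Lemma~\ref{lem.4.4} produces maps with \emph{constant} projection, destroying (i)); the same false stationarity is also what your fix for the degenerate loci $\rho(e)\in\{0,1\}$ relies on. So the glued $R$ is in general discontinuous along $\{u=\rho(e)\}$. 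Note the structural contrast with the paper: there a \emph{single} lower semistationary homotopy $\varphi:(\underline{E}\times\uparrow\mathbf{I})\times\uparrow\mathbf{I}\rightarrow\underline{B}\times\uparrow\mathbf{I}$ is written down piecewise (according to $\rho(e)\leq t_1$ or $\rho(e)\geq t_1$), lifted once with initial face $pr_1$, and $R$ is read off as the $t_2=1$ face, with (ii) coming from the vertical path $t_2\mapsto\varphi'(e,\rho(e),t_2)$; because only one lift is taken, there is no seam between independent lifts to match. Whatever delicacy remains in verifying that the paper's explicit piecewise formula is itself continuous and directed (the very monotonicity tension you identified is at work in its third branch), it is this one-lift design, not a two-lift gluing, that avoids the obstacle on which your proposal founders.
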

\begin{proof} Define $\varphi:(\underline{E}\times \uparrow \mathbf{I})\times \uparrow \mathbf{I}\rightarrow \underline{B}\times \uparrow \mathbf{I}$ by
\[\varphi(e,t_1,t_2))=\left\{ \begin{array}{ll}
(\pi(e),\rho(e))=p(e), & \mbox{if $0\leq t_2\leq \frac{1}{2}, t_1\in [0,1] $},\\
(\pi(e),2(1-t_2)\rho(e)+t_1(2t_2-1)), & \mbox{if $\frac{1}{2}\leq t_2\leq 1, \rho(e)\leq t_1 $},\\(\pi(e),2t_1(1-t_2)+(2t_2-1)\rho(e)),&\mbox{if $\frac{1}{2}\leq
t_2\leq 1,\rho(e)\geq t_1$}.\end{array} \right.
\]
This is a well defined d-map and it is a lower semistationary d-homotopy with $\varphi\circ\partial^0=p(e)=(p\circ pr_1)(e,t)$, for $pr_1:\underline{E}\times \uparrow \mathbf{I}$ is the projection $pr_1(e,t)=e$. Then, by hypothesis, there exists $\varphi':
(\underline{E}\times \uparrow \mathbf{I})\times \uparrow \mathbf{I}\rightarrow \underline{E}$ , with $p\circ \varphi'=\varphi$ and $\varphi'(e,t_1,0)=pr_1(e,t_1)=e, (\forall )t_1\in [0,1]$. Define $R:\underline{E}\times \uparrow \mathbf{I}\rightarrow \underline{E}$ by $R(e,t)=\varphi'(e,t,1)$.
This satisfies (1), since for $t_2=1$, and $t_1=t$, we have $pR(e,t)=p\varphi'(e,t,1))= \varphi(e,t,1)=(\pi(e),t)$. And (ii) follows from $r(e)=R(e,\rho(e))=\varphi'(e,\rho(e),1)$, i.e. $r=\varphi'(., \rho(.), 1)\stackunder{(p)}{\succeq_d}\varphi'(.,\rho(.),0)$ by $\kappa(e,t)=\varphi'(e,\rho(e),t)$ , with $p\kappa(e,t)=\varphi(e,\rho(e),t)=(\pi(e),t)=p(e)$.
\end{proof}
\begin{cor}\label{cor.6.8}With the same conditions and notations as in Lemma \ref{lem.6.7}, let $p^t:\underline{E}^t\rightarrow \underline{B}$ the part of $p:\underline{E}\rightarrow \underline{B}\times\uparrow \mathbf{I}$ over $\underline{B}\times \{t\}\approx \underline{B}$. Then the d-maps
$$ h^1: \underline{E}^0\rightarrow \underline{E}^1, h^1(x)=R(x,1),$$
$$ h^0: \underline{E}^1\rightarrow \underline{E}^0, h^0(y)=R(y,0)$$
are reciprocal directed fibre homotopy equivalences.
\end{cor}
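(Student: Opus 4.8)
The plan is to check first that $h^{1}$ and $h^{0}$ are d-maps over $\underline{B}$, and then to realize each of the two composites as the endpoint of one genuine directed homotopy built from $R$, so that the non-reversible vertical homotopy of Lemma \ref{lem.6.7}(ii) does the remaining work.

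For well-definedness, note that $x\in\underline{E}^{0}$ means $\rho(x)=0$; by Lemma \ref{lem.6.7}(i) we have $pR(x,1)=(\pi(x),1)$, hence $R(x,1)\in\underline{E}^{1}$ and $p^{1}(h^{1}(x))=\pi(x)=p^{0}(x)$, so $h^{1}$ is a d-map $\underline{E}^{0}\to\underline{E}^{1}$ with $p^{1}\circ h^{1}=p^{0}$. Symmetrically $h^{0}$ is a d-map over $\underline{B}$ with $p^{0}\circ h^{0}=p^{1}$. I would also record that for $x\in\underline{E}^{0}$ one has $R(x,0)=R(x,\rho(x))=r(x)$, and $r(x)\in\underline{E}^{0}$ because $pr(x)=p(x)$; likewise $R(y,1)=r(y)\in\underline{E}^{1}$ for $y\in\underline{E}^{1}$.

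To prove $h^{0}\circ h^{1}\stackunder{(p^{0})}{\simeq_{d}}id_{\underline{E}^{0}}$ I would define $H\colon\underline{E}^{0}\times\uparrow\mathbf{I}\to\underline{E}^{0}$ by $H(x,s)=R(R(x,s),0)$. This is a d-map (a composite of $R$ with the d-map $(x,s)\mapsto(R(x,s),0)$), it lands in $\underline{E}^{0}$ by (i), and it is \emph{vertical} over $p^{0}$, since $p^{0}H(x,s)=\pi(R(x,s))=\pi(x)$ is independent of $s$. Its endpoints are $H(x,1)=R(R(x,1),0)=(h^{0}\circ h^{1})(x)$ and $H(x,0)=R(r(x),0)=(r\circ r)(x)$, so $r\circ r\stackunder{(p^{0})}{\preceq_{d}}h^{0}\circ h^{1}$. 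It then remains to see $r\circ r\stackunder{(p^{0})}{\simeq_{d}}id_{\underline{E}^{0}}$: by Lemma \ref{lem.6.7}(ii) the vertical homotopy $r\stackunder{(p)}{\simeq_{d}}id_{\underline{E}}$ restricts to $\underline{E}^{0}$ (each intermediate map stays in $\underline{E}^{0}$ because $p$ is constant along it), and applying the fibrewise map $r$ to this zigzag gives $r\circ r\stackunder{(p^{0})}{\simeq_{d}}r\stackunder{(p^{0})}{\simeq_{d}}id_{\underline{E}^{0}}$. Concatenating the two vertical relations yields $h^{0}\circ h^{1}\stackunder{(p^{0})}{\simeq_{d}}id_{\underline{E}^{0}}$. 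The relation $h^{1}\circ h^{0}\stackunder{(p^{1})}{\simeq_{d}}id_{\underline{E}^{1}}$ is obtained in exactly the same way from $H'(y,s)=R(R(y,s),1)$, whose endpoints on $\underline{E}^{1}$ are $h^{1}\circ h^{0}$ and $r\circ r$; with Definition \ref{def.6.1} this shows $h^{1}$ and $h^{0}$ are reciprocal directed fibre homotopy equivalences.

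The main obstacle is precisely the irreversibility of directed homotopies: in the classical argument one slides up to level $1$ and back to level $0$ and contracts the loop by reversing, which is forbidden here. The device that replaces it is the honest directed homotopy $s\mapsto R(R(\cdot,s),0)$, which connects the round trip $h^{0}\circ h^{1}$ to $r\circ r$ without any reversal; the price is the spurious factor $r\circ r$, which is removed only through the non-reversible zigzag of (ii). The bookkeeping to confirm that every constructed map is a d-map and is vertical is routine once property (i) of $R$ is invoked, as above.
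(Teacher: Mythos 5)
Your proof is correct, and its core constructions coincide with the paper's: the double-$R$ homotopies $s\mapsto R(R(\cdot,s),t)$ connecting each round trip to $r\circ r$, and Lemma \ref{lem.6.7}(ii) to dispose of the factor $r\circ r$. Where you genuinely diverge is in how you finish. The paper, after constructing these same homotopies, deliberately weakens them to ordinary directed homotopies, concludes that $h^0$ is an ordinary d-homotopy equivalence over $\underline{B}$, observes that $p^0,p^1$ are themselves directed weak fibrations (pullbacks of $p$, via Proposition \ref{prop.3.14}), and then invokes the main Theorem \ref{thm.6.2} to upgrade to a directed fibre homotopy equivalence. You instead verify that every homotopy in sight is \emph{vertical} --- the double-$R$ homotopies because $\pi\circ R(\cdot,s)=\pi$ by (i), and the zigzag of Lemma \ref{lem.6.7}(ii) because vertical homotopies restrict to the parts $\underline{E}^0,\underline{E}^1$ and remain vertical after postcomposition with the fibrewise map $r$ --- and then conclude directly from Definition \ref{def.6.1}. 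Your route buys self-containedness: it bypasses Theorem \ref{thm.6.2} entirely and never needs the fact that $p^0,p^1$ are directed weak fibrations; it also delivers the word ``reciprocal'' cleanly, since Theorem \ref{thm.6.2} applied to $h^0$ by itself only produces \emph{some} fibre homotopy inverse, and identifying that inverse with $h^1$ requires exactly the vertical relations you establish. The cost is the extra bookkeeping (verticality, restriction to the parts, stability of vertical zigzags under composition with fibrewise maps), which you carry out correctly; the paper's appeal to Theorem \ref{thm.6.2} outsources all of that to a theorem already proved.
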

\begin{proof}At first, $p^1h^1(x)=p^1R(x,1)=pR(x_1,1)=(\pi(x),1)\equiv \pi(x)\equiv (\pi(x),0)=p^0(x)$, therefore $h^1$ is a d-map over $\underline{B}$.
Analogously we have $p^0\circ h^0=p^1$. Then, $h^1\circ h^0=R(R((.,0),1)\stackunder{(p)}{\preceq_d}R(R(.,1),1)=r\circ r\stackunder{(p)}{\preceq_d}id_{\underline{E}^0}$. The first $\stackunder{(p)}{\preceq_d}$ is achieved by the (ordinary) d-homotopy $\varphi(y,\tau)=
R(R(y,\tau),1)$ and the second by Lemma \ref{lem.6.7} (ii). Therefore $h^1\circ h^0\simeq _d id_{\underline{E}^0}$. Similarly $h^0\circ h^1\simeq_d
id_{\underline{E}^1}$. It follows that $h^0$ is a d-map over $\underline{B}$ and a d-homotopy equivalence. Now since obviously $p^i, i=0,1,$ are
directed weak fibrations, by Theorem \ref{thm.6.2} we have that $h^0$ is a directed fibre homotopy equivalence with inverse $h^1$.
\end{proof}
Using the last two results we can prove the following corollary by paraphrasing a part of the proof of Theorem 6.3 in \cite{Dold1}, p.246.
\begin{cor}\label{cor.6.9}  Let $p:\underline{E}\rightarrow \underline{B}$, $p':\underline{E}'\rightarrow \underline{B}$ be directed weak fibrations
and $f:\underline{E}\rightarrow \underline{E}'$ a d-map over $B$, $p'\circ f=p$. Suppose that $\underline{B}$ is directed contractible to a point
$b\in \underline{B}$ (i.e., $id_{\underline{B}}\simeq_d c_b$, where $c_b$ is the constant map $c_b(\underline{B})=b$), and that the restriction \qquad
$f_b:\uparrow p^{-1}(b)\rightarrow \uparrow p'^{-1}(b)$  is a (ordinary) directed homotopy equivalence. Then $f$ is a directed fibre homotopy equivalence.
\end{cor}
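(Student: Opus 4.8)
The plan is to use the contraction to replace both $p$ and $p'$ by \emph{trivial} fibrations with fibres $\uparrow p^{-1}(b)$ and $\uparrow p'^{-1}(b)$, transport $f$ to a map between these trivial models, and show that transported map is a directed homotopy equivalence; Theorem \ref{thm.6.2} then finishes. First I would realize $id_{\underline B}\simeq_d c_b$ by a finite zigzag of directed homotopies $\underline B\times\uparrow\mathbf I\to\underline B$. Pulling $p$ back along each homotopy of the zigzag — each pullback is again a directed weak fibration by Proposition \ref{prop.3.14}, hence has the dWCHP — and applying Corollary \ref{cor.6.8} to each resulting fibration over $\underline B\times\uparrow\mathbf I$, I obtain a chain of directed fibre homotopy equivalences joining $p$ to the pullback of $p$ along $c_b$, which is exactly the trivial projection $\underline B\times\uparrow p^{-1}(b)\to\underline B$. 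Composing gives reciprocal directed fibre homotopy equivalences over $\underline B$
$$h^1\colon\underline E\longrightarrow\underline B\times\uparrow p^{-1}(b),\qquad h^0\colon\underline B\times\uparrow p^{-1}(b)\longrightarrow\underline E,$$
and likewise $k^1,k^0$ relating $p'$ to $\underline B\times\uparrow p'^{-1}(b)\to\underline B$, with $h^0h^1\simeq_d id_{\underline E}$, $k^0k^1\simeq_d id_{\underline E'}$ and the reverse composites $\simeq_d id$ as well.

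Next I would set $\phi:=k^1\circ f\circ h^0\colon\underline B\times\uparrow p^{-1}(b)\to\underline B\times\uparrow p'^{-1}(b)$, a d-map over $\underline B$. Since $k^0k^1\simeq_d id$ and $h^0h^1\simeq_d id$, one has $f\simeq_d k^0\circ\phi\circ h^1$; as $k^0$ and $h^1$ are directed homotopy equivalences, it suffices to prove that $\phi$ is one. I would do this in two steps. (a) \emph{Restriction over $b$.} Because $h^0,h^1,k^0,k^1$ are directed fibre homotopy equivalences, restricting the vertical homotopies of Definition \ref{def.6.1} to $\uparrow p^{-1}(b)$ (resp.\ $\uparrow p'^{-1}(b)$) shows their fibre restrictions $h^0_b,k^1_b,\dots$ are directed homotopy equivalences; hence $\phi_b=k^1_b\circ f_b\circ h^0_b$ is a composite of directed homotopy equivalences — this is precisely where the hypothesis on $f_b$ is used — so $\phi_b$ is a directed homotopy equivalence. (b) \emph{Second use of the contraction.} For a single directed homotopy $H$ from $id_{\underline B}$ to $c_b$, the formula $\Phi(\beta,x,t)=(\beta,\operatorname{pr}_2\phi(H(\beta,t),x))$ is a legitimate directed homotopy (it is directed because $H$ is a d-map and $\phi$ preserves the $\underline B$-coordinate) from $\phi$ to $id_{\underline B}\times\phi_b$, and in general one chains such homotopies along the zigzag; thus $\phi\simeq_d id_{\underline B}\times\phi_b$. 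Since $\phi_b$ is a directed homotopy equivalence, so is $id_{\underline B}\times\phi_b$ (a homotopy inverse $\psi_b$ yields the inverse $id_{\underline B}\times\psi_b$, the required homotopies being the products of $id_{\underline B}$ with the homotopies for $\phi_b$), and therefore so is $\phi$.

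Combining (a) and (b), $f\simeq_d k^0\circ\phi\circ h^1$ is an ordinary directed homotopy equivalence over $\underline B$; since $p$ and $p'$ are directed weak fibrations, Theorem \ref{thm.6.2} upgrades this to the conclusion that $f$ is a directed fibre homotopy equivalence.

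The step requiring the most care — and the one where the non-reversibility of directed homotopies would defeat a naive argument — is the trivialization of $f$. One must \textbf{not} attempt to identify $\phi$ directly with $id_{\underline B}\times f_b$: that would force the two independently produced maps $R,R'$ of Lemma \ref{lem.6.7} (underlying $h^\bullet$ and $k^\bullet$) to be compatible with $f$, which in the classical reversible setting is arranged by running homotopies backwards and is unavailable here. The device that circumvents this is to homotope the \emph{actual} $\phi$ to a product via the contraction $H$ in step (b), while arguing about $\phi_b$ separately in step (a), so that no reversal is ever needed. The remaining points — the finite-zigzag bookkeeping realizing $id_{\underline B}\simeq_d c_b$, and the verification that $\Phi$, the fibrewise restrictions, and the product inverse are all genuinely directed — are routine.
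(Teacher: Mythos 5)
Your proposal is correct and takes essentially the approach the paper intends: the paper's ``proof'' is only a pointer to Dold's Theorem 6.3 together with Lemma \ref{lem.6.7} and Corollary \ref{cor.6.8}, and your argument --- pulling $p$ and $p'$ back along the contraction (Proposition \ref{prop.3.14}), trivializing both via Corollary \ref{cor.6.8}, transporting $f$ to $\phi$ between the trivial models, homotoping $\phi$ to $id_{\underline{B}}\times\phi_b$ by scanning with the contraction, and closing with Theorem \ref{thm.6.2} --- is precisely the directed paraphrase of that classical proof. Your handling of the two genuinely directed difficulties (the finite zigzag realizing $id_{\underline{B}}\simeq_d c_b$, and the refusal to identify $\phi$ with $id_{\underline{B}}\times f_b$ outright, which would require reversing homotopies) supplies correctly the details the paper leaves implicit.
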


Faculty of Mathematics

"Al. I. Cuza" University,

 700505-Iasi, Romania.

E-mail:ioanpop@uaic.ro

\end{document}